\newtheorem{theorem}{Theorem}
\theoremstyle{plain}
\newtheorem{corollary}{Corollary}
\newtheorem{definition}{Definition}
\newtheorem{example}{Example}
\newtheorem{lemma}{Lemma}
\newtheorem{proposition}{Proposition}
\newtheorem{remark}{Remark}
\numberwithin{equation}{section}
\begin{document}
\begin{center}
\vspace*{1.3cm}

\textbf{SCALARIZATION IN VECTOR OPTIMIZATION BY FUNCTIONS WITH UNIFORM SUBLEVEL SETS}

\bigskip

by

\bigskip

PETRA WEIDNER\footnote{HAWK Hochschule f\"ur angewandte Wissenschaft und Kunst Hildesheim/\-Holzminden/ G\"ottingen University of Applied Sciences and Arts, Faculty of Natural Sciences and Technology,
D-37085 G\"ottingen, Germany, {petra.weidner@hawk-hhg.de}.}

\bigskip
\bigskip
Research Report \\ 
Version 3 from December 05, 2017\\
Improved, Extended Version of Version 1 from June 18, 2016

\end{center}

\bigskip
\bigskip

\noindent{\small {\textbf{Abstract:}}
In this paper, vector optimization is considered in the framework of decision making and optimization in general spaces.
Interdependencies between domination structures in decision making and domination sets in vector optimization are given.
We prove some basic properties of efficient and of weakly efficient points in vector optimization.
Sufficient conditions for solutions to vector optimization problems are shown using minimal solutions of functionals.
We focus on the scalarization by functions with uniform sublevel sets, which also delivers necessary conditions for efficiency and weak efficiency. The functions with uniform sublevel sets may be, e.g., continuous or even Lipschitz continuous, convex, strictly quasiconcave or sublinear. They can coincide with an order unit norm on a subset of the space.
}

\bigskip

\noindent{\small {\textbf{Keywords:}} 
optimization in general spaces; vector optimization; decision making; scalarization
}

\bigskip

\noindent{\small {\textbf{Mathematics Subject Classification (2010): }
90C48, 90C29, 90B50}}

\section{Introduction}

In this report, we consider vector optimization problems as optimization problems in general spaces. The way to approach these problems is motivated by decision making in Section \ref{s-decision}, where the relationship between decision making and optimal elements w.r.t. domination sets is investigated. In Section \ref{s-basics-vo}, we define the vector optimization problem and give some basic results for the efficient point set and the weakly efficient point set including localization and existence. Sufficient conditions for efficiency and weak efficiency are given using minimal solutions of scalar-valued functions. Scalarization results for the efficient point set and the weakly efficient point set using functionals with uniform sublevel sets are proved in Section \ref{s-scal-uni}. These results include necessary conditions for efficiency and weak efficiency. Since we need several statements for functions with uniform sublevels sets and since properties of these functions are essential within their application in vector optimization procedures, the corresponding results from \cite{Wei16a} are summarized in Section \ref{s-uni}. These results also connect the functionals to norms. As a consequence, scalarization by norms is investigated in Section \ref{s-scal-norm}.

The report contains and extends results from \cite{Wei83}, \cite{Wei85}, \cite{GerWei90} and \cite{Wei90} on the basis of
\cite{Wei16a}. 

The functionals we consider can be real-valued or also attain values $-\infty$ or $+\infty$, but we
will use the symbolic function value $\nu $ (instead of the value $+\infty$ in convex analysis) when extending a functional to the entire space or in points where a function is not feasible otherwise. Thus, our approach differs from the classical one in convex analysis in these cases since the functions we use are of interest in minimization problems as well as in maximization problems. Consequently, we consider functions which can attain values in $\overline{\mathbb{R}}_{\nu }:=\overline{\mathbb{R}}\cup\{\nu \}$, where $\overline{\mathbb{R}}:=\mathbb{R}\cup\{-\infty, +\infty\}$. Details of functions with values in $\overline{\mathbb{R}}_{\nu }$ are explained in \cite{Wei15}. 

From now on, $\mathbb{R}$ will denote the set of real numbers.
We define $\mathbb{R}_{+}:=\{x\in\mathbb{R}\mid x\geq 0\}$, $\mathbb{R}_{>}:=\{x\in\mathbb{R}\mid x > 0\}$ and
$\mathbb{R}_{+}^2:=\{(x_1,x_2)\in\mathbb{R}^2\mid x_1\geq 0, x_2\geq 0\}$.
Linear spaces will always be assumed to be real vector spaces. 
Given two sets $A\subseteq \mathbb{R}$, $B\subseteq Y$ and some vector $k$ in $Y$ in a linear space $Y$, we will use the notation $A\; B:=A \cdot B:=\{a \cdot b\mid a \in A , \; b\in B\}$ and $A\; k:=A \cdot k:=A \cdot \{ k\}$.
A set $C$ in a linear space $Y$ is a cone if $\mathbb{R}_{+} C\subseteq C.$ The cone $C$ is called non-trivial if $C\not=\emptyset$, $C\not=\{0\}$ and $C\not= Y$ hold. For a subset $A$ of some linear space $Y$, 
$\operatorname*{core}A$ will denote the algebraic interior of $A$ and $0^+A:=\{u\in Y  \mid  \ \forall a\in A \; \forall t\in \mathbb{R}_{+}\colon a+tu\in A\}$ the recession cone of $A$.
In a topological space $Y$, $\operatorname*{cl}A$, $\operatorname*{int}A$ and $\operatorname*{bd}A$ denote the closure, the interior and the boundary, respectively, of a subset $A$.
For a functional $\varphi$ defined on some space $Y$ and attaining values in $\overline{\mathbb{R}}_{\nu }$, we will denote the epigraph of $\varphi$ by   
$\operatorname*{epi}\varphi$ and the effective domain of $\varphi$ by $\operatorname*{dom}\varphi$. $\mathcal{P}(Y)$ stands for the power set of $Y$.

Beside the properties of functions defined in \cite{Wei15}, we will need the following ones:
\begin{definition}\label{d-mon}
Let $Y$ be a linear space, $B\subseteq Y$ and $\varphi: Y \to \overline{\mathbb{R}}_{\nu } $. \\
$\varphi$ is said to be
\begin{itemize}
\item[(a)]
$B$-monotone on $F\subseteq \operatorname*{dom}\varphi$
if $y^1,y^2 \in F$ and $y^{2}-y^{1}\in B$ imply $\varphi
(y^{1})\le \varphi (y^{2})$,
\item[(b)] strictly $B$-monotone on $F\subseteq \operatorname*{dom}\varphi$ 
if $y^1,y^2 \in F$ and $y^{2}-y^{1}\in B\setminus
\{0\}$ imply $\varphi (y^{1})<\varphi (y^{2})$,
\item[(c)] $B$-monotone or strictly $B$-monotone if it is $B$-monotone or strictly $B$-monotone, respectively, on $\operatorname*{dom}\varphi$,
\item[(d)] quasiconvex if $\operatorname*{dom}\varphi$ is convex and
\[\varphi (\lambda y^1 + (1-\lambda ) y^2) \le \operatorname*{max}(\varphi (y^1),\varphi (y^2))\]
for all $y^1, y^2 \in \operatorname*{dom}\varphi$ and $\lambda \in (0,1)$,
\item[(e)] strictly quasiconvex if $\operatorname*{dom}\varphi$ is convex and
\[\varphi (\lambda y^1 + (1-\lambda ) y^2) < \operatorname*{max}(\varphi (y^1),\varphi (y^2))\]
for all $y^1, y^2 \in \operatorname*{dom}\varphi$ with $y^1\not= y^2$ and $\lambda \in (0,1)$,
\item[(f)] strictly quasiconcave if $-\varphi$ is strictly quasiconvex. 
\end{itemize}
\end{definition}
\smallskip

\section{Functions with Uniform Sublevel Sets}\label{s-uni}

In this section, we summarize a part of the results for functionals with uniform sublevel sets which we have proved in \cite{Wei16a}. These functions will be used for scalarization in Section \ref{s-scal-uni}. Hence, their properties are of special importance for procedures which determine solutions to vector optimization problems. Moreover, we will give an existence result for efficient points in vector optimization, which is proved by using the functions introduced in this section, in Section \ref{s-basics-vo}.

\quad

\begin{tabular}{ll}
Assuming $(H1_{a-H,k})$: & $Y$ is a topological vector space, $a\in Y$,\\
& $H$ is a closed proper subset of $Y$ and $k\in 0^+H\setminus \{0\}$, 
\end{tabular}

we define $\varphi _{a-H,k}:Y\rightarrow \overline{{\mathbb{R}}}_{\nu }$ by $\varphi_{a-H,k} (y) := \inf \{t\in
{\mathbb{R}} \mid y\in a-H+tk\}$.\\
For $\varphi _{0-H,k}$, we will simply write $\varphi _{-H,k}$.

We have to keep in mind that we use the following terms and definitions according to \cite{Wei15}:
\begin{enumerate}
\item $\operatorname*{inf}\emptyset =\nu\not\in\overline{\mathbb{R}},$
\item $\operatorname*{dom}\varphi _{a-H,k}=\{y\in Y\mid \varphi _{a-H,k}(y)\in\mathbb{R}\cup (-\infty )\}$ is the (effective) domain of $\varphi _{a-H,k},$
\item $\varphi _{a-H,k}$ is said to be finite-valued if $\varphi _{a-H,k}(y)\in\mathbb{R} \mbox{ for all } y\in Y,$ 
\item $\varphi _{a-H,k}$ is said to be finite-valued on $F\subseteq Y$ if $\varphi _{a-H,k}(y)\in\mathbb{R} \mbox{ for all } y\in F,$ 
\item $\varphi _{a-H,k}$ is said to be proper if $\operatorname*{dom}\varphi _{a-H,k}\not= \emptyset$ and $\varphi _{a-H,k}$ is finite-valued on $\operatorname*{dom}\varphi _{a-H,k}.$
\end{enumerate}

\begin{lemma}\label{t251ua}
Assume $(H1_{a-H,k})$.\\
Then $\varphi _{a-H,k}$ is lower semicontinuous on $\operatorname*{dom}\varphi _{a-H,k}=a-H+{\mathbb{R}}k \not= \emptyset$.\\
Moreover,
\begin{itemize}
\item[(a)] $\{y\in Y\mid \varphi_{a-H,k}(y)\leq t\}  =  a-H+t k \mbox{ for all } t \in {\mathbb{R}}$.
\item[(b)] $\varphi _{a-H,k}$ is finite-valued on $\operatorname*{dom}\varphi _{a-H,k}\setminus (a-H)$. 
\item[(c)] If $k\in \operatorname*{core}0^+H$, then $\varphi _{a-H, k}$ is finite-valued and\\
$\{y\in Y\mid \varphi_{a-H,k}(y)< t\}=\operatorname*{core}A+tk  \mbox{ for all } t \in \mathbb{R}$.
\item[(d)] If $k\in 0^+H\cap (-0^+H)$, then $\varphi_{a-H,k}$ does not attain any real value.
\item[(e)] $\varphi _{a-H,k}(y)=-\infty \iff y+\mathbb{R}k\subseteq a-H$.
\item[(f)] $\varphi _{a-H,k} $ is positively homogeneous $\iff$ $a-H$ is a cone.
\item[(g)] $\varphi _{a-H,k} $ is sublinear $\iff$ $a-H$ is a convex cone.
\item[(h)] If $H+D\subseteq H$ for $D\subset Y$, then $\varphi _{a-H,k}$ is D-monotone.
\item[(i)] If $\varphi _{a-H,k}$ is finite-valued on $F\subseteq Y$, $D\subset Y$ and
$H+(D\setminus \{0\}) \subseteq \operatorname*{core} H$, then $\varphi _{a-H,k} $ is strictly $D$-monotone on $F$.
\end{itemize}
\end{lemma}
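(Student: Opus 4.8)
The plan is to derive the whole lemma from one monotonicity observation together with part~(a), which then serves as the master identity. First I would record that the family $\{a-H+tk:t\in\mathbb{R}\}$ is nondecreasing in $t$: since $k\in 0^+H$ we have $h+sk\in H$ for every $h\in H$ and every $s\ge 0$, hence $a-H+t_1k\subseteq a-H+t_2k$ whenever $t_1\le t_2$; equivalently, for each $y\in Y$ the set $S_y:=\{t\in\mathbb{R}:y\in a-H+tk\}$ is upward closed in $\mathbb{R}$, and $\varphi_{a-H,k}(y)=\inf S_y$. Part~(a) is then obtained as follows: $a-H+tk\subseteq\{y:\varphi_{a-H,k}(y)\le t\}$ is immediate, and for the reverse inclusion $\varphi_{a-H,k}(y)\le t$ forces $t+\varepsilon\in S_y$ for every $\varepsilon>0$ by upward closedness, i.e. $a-y+tk+\varepsilon k\in H$ for all $\varepsilon>0$, so letting $\varepsilon\downarrow 0$ and using that $H$ is closed gives $a-y+tk\in H$. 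Granting~(a), every sublevel set $\{y:\varphi_{a-H,k}(y)\le t\}=a-H+tk$ is closed (a translate of the closed set $-H$), which is precisely lower semicontinuity; and since $\varphi_{a-H,k}(y)\in\mathbb{R}\cup\{-\infty\}$ exactly when $S_y\neq\emptyset$, we get $\operatorname*{dom}\varphi_{a-H,k}=\bigcup_{t\in\mathbb{R}}(a-H+tk)=a-H+\mathbb{R}k$, nonempty because $H\neq\emptyset$.

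Items~(b), (d), (e) now follow from the upward closedness of $S_y$. Indeed $\varphi_{a-H,k}(y)=\inf S_y=-\infty$ iff $S_y$ is unbounded below iff $S_y=\mathbb{R}$ iff $y-\mathbb{R}k\subseteq a-H$, which is~(e) (using $\mathbb{R}k=-\mathbb{R}k$); then~(b) follows since on $\operatorname*{dom}\varphi_{a-H,k}$ only values in $\mathbb{R}\cup\{-\infty\}$ occur and by~(e) the value $-\infty$ forces $y\in a-H$. For~(d), if $k,-k\in 0^+H$ then any real multiple of $k$ may be added to any point of $H$ without leaving $H$, so $S_y$ is either empty or all of $\mathbb{R}$, whence $\varphi_{a-H,k}$ equals $-\infty$ on its domain and $\nu$ elsewhere, never a real number. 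For~(f) and~(g): rescaling the identity in~(a) shows that $\varphi_{a-H,k}$ is positively homogeneous precisely when $\lambda(a-H)=a-H$ for all $\lambda>0$, i.e. when $a-H$ is a cone; and since $\operatorname*{epi}\varphi_{a-H,k}=\{(y,t)\in Y\times\mathbb{R}:y-tk\in a-H\}$ is the preimage of $a-H$ under a continuous linear map, $\varphi_{a-H,k}$ is convex iff $a-H$ is convex, so sublinearity is equivalent to $a-H$ being a convex cone. For~(h): if $H+D\subseteq H$ and $y^2-y^1\in D$, then $a-y^2+tk\in H$ implies $a-y^1+tk=(a-y^2+tk)+(y^2-y^1)\in H+D\subseteq H$, hence $S_{y^2}\subseteq S_{y^1}$ and $\varphi_{a-H,k}(y^1)\le\varphi_{a-H,k}(y^2)$.

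The substantive work is in~(c) and its strict analogue~(i). The engine is the claim that if $k\in\operatorname*{core}0^+H$ then $h+\varepsilon k\in\operatorname*{core}H$ for every $h\in H$ and every $\varepsilon>0$: given a direction $v$, the definition of the algebraic interior provides $\delta>0$ with $k+sv\in 0^+H$ for all $s\in[0,\delta]$, and then $h+\varepsilon(k+sv)\in H$ for all $s\in[0,\delta]$, which is exactly the statement that $h+\varepsilon k\in\operatorname*{core}H$. From this, $\operatorname*{dom}\varphi_{a-H,k}=Y$: fix $y$ and some $h_0\in H$, take $\delta>0$ with $k+\delta(a-y-h_0)\in 0^+H$, and add this recession direction (suitably rescaled) to $h_0$ to land $a-y+\frac1\delta k$ in $H$. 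If $\varphi_{a-H,k}(y)=-\infty$ for some $y$, then $a-y+\mathbb{R}k\subseteq H$ by~(e), and the same maneuver forces $H=Y$, contradicting that $H$ is proper; hence $\varphi_{a-H,k}$ is real-valued. For the strict sublevel formula, $\varphi_{a-H,k}(y)<t$ gives $s_0<t$ with $a-y+s_0k\in H$ (by~(a)), and the engine with $\varepsilon=t-s_0$ puts $a-y+tk\in\operatorname*{core}H$, i.e. $y\in\operatorname*{core}(a-H)+tk$ (using $\operatorname*{core}(a-H)=a-\operatorname*{core}H$); conversely, if $y-tk\in\operatorname*{core}(a-H)$, equivalently $a-y+tk\in\operatorname*{core}H$, a small move in the direction $-k$ stays in $H$, so $a-y+(t-\varepsilon)k\in H$ for some $\varepsilon>0$ and hence $\varphi_{a-H,k}(y)\le t-\varepsilon<t$. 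Finally~(i): $\varphi_{a-H,k}$ is $D$-monotone by~(h); if $y^1,y^2\in F$ with $d:=y^2-y^1\in D\setminus\{0\}$ and, toward a contradiction, $\varphi_{a-H,k}(y^1)=\varphi_{a-H,k}(y^2)=\tau$ (necessarily in $\mathbb{R}$, since $\varphi_{a-H,k}$ is finite-valued on $F$), then $a-y^2+\tau k\in H$ by~(a), so $a-y^1+\tau k=(a-y^2+\tau k)+d\in H+(D\setminus\{0\})\subseteq\operatorname*{core}H$, and a small move in the direction $-k$ gives $\varphi_{a-H,k}(y^1)\le\tau-\varepsilon<\tau$, a contradiction.

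I expect the main obstacle to be the bookkeeping around the algebraic interior in~(c) and~(i): one must use that a single element of $0^+H$ can be added, with a positive coefficient, to any point of $H$ without leaving $H$ (this is what lets the argument avoid assuming $0^+H$ convex), together with the identity $\operatorname*{core}(a-H)=a-\operatorname*{core}H$; and one has to keep track of the symbolic value $\nu$, since $\operatorname*{dom}\varphi_{a-H,k}$ excludes it, so statements such as ``$\varphi_{a-H,k}(y)$ is not $+\infty$'' must be read as ``$S_y\neq\emptyset$''. Note also that ``$H$ proper'' is used in two separate roles: $H\neq\emptyset$ for the domain being nonempty, and $H\neq Y$ for ruling out the value $-\infty$ under the core assumption in~(c).
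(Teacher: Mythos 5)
Your proof is correct and self-contained; the paper itself only quotes this lemma from \cite{Wei16a} without reproducing a proof, and your route --- deriving everything from the upward closedness of $S_y=\{t\in\mathbb{R}\mid y\in a-H+tk\}$ and the master identity (a), with the ``engine'' $h+\varepsilon k\in\operatorname*{core}H$ driving (c) and (i) --- is exactly the standard argument for these Gerstewitz-type functionals. The one step worth tightening is the converse in (f): $\lambda(a-H)=a-H$ for all $\lambda>0$ gives invariance under strictly positive scaling but not yet $0\in a-H$, which the paper's definition of a cone ($\mathbb{R}_{+}C\subseteq C$ with $0\in\mathbb{R}_{+}$) requires; closedness of $H$ rescues this by letting $\lambda\downarrow 0$ along $\lambda x$ for any $x\in a-H$.
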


\begin{lemma}\label{prop-funcII}
Assume $(H2_{a-H,k})$:\quad $(H1_{a-H,k})$ holds and $H+\mathbb{R}_{>} k\subseteq \operatorname*{int}H$.\\
Then $\varphi _{a-H,k}$ is continuous on $\operatorname*{dom}\varphi _{a-H,k}$,
\begin{eqnarray*}
\{y\in Y\mid \varphi_{a-H,k}(y)< t\} &= & a-\operatorname*{int}H+t k\mbox{ for all } t \in {\mathbb{R},} \\
\{y\in Y\mid \varphi_{a-H,k}(y)= t\} & = & a-\operatorname*{bd}H+t k\mbox{ for all } t \in {\mathbb{R}}. 
\end{eqnarray*} 
Moreover,
\begin{itemize}
\item[(a)] $\varphi _{a-H,k}$ is finite-valued on $\operatorname*{dom}\varphi _{a-H,k}\setminus (a-\operatorname*{int}H)$. 
\item[(b)] If $\varphi _{a-H,k} $ is proper, then:\\
$\varphi _{a-H,k} $ is strictly quasiconvex $\iff$ $H$ is a strictly convex set.
\item[(c)] If $\varphi _{a-H,k}$ is finite-valued, then:\\
$\varphi _{a-H,k}$ is concave $\iff Y\setminus \operatorname*{int}H$ is convex,\\
$\varphi _{a-H,k}$ is strictly quasiconcave $\iff Y\setminus\operatorname*{int}H$ is a strictly convex set.
\end{itemize}
\end{lemma}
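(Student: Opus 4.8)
The plan is to build everything on Lemma \ref{t251ua}, especially the sublevel-set identity (a), $\{y \mid \varphi_{a-H,k}(y) \le t\} = a - H + tk$, together with the extra hypothesis $H + \mathbb{R}_{>}k \subseteq \operatorname*{int}H$ of $(H2_{a-H,k})$. First I would establish the two sublevel-set formulas. For the strict-sublevel formula, note that $\varphi_{a-H,k}(y) < t$ holds if and only if there is $s < t$ with $y \in a - H + sk$, i.e. $y \in a - H + sk$ for some $s \in (-\infty, t)$. Writing $s = t - \varepsilon$ with $\varepsilon > 0$ gives $y \in a - (H + \varepsilon k) + tk \subseteq a - \operatorname*{int}H + tk$ by the $(H2)$ hypothesis; conversely, if $y \in a - \operatorname*{int}H + tk$, then since $H + \mathbb{R}_{>}k \subseteq \operatorname*{int}H$ and $k \in 0^+H$, a small step argument (using that $\operatorname*{int}H$ absorbs a segment in the $-k$ direction, or more carefully that $y - tk \in a - \operatorname*{int}H$ and $\operatorname*{int}H \subseteq H + \mathbb{R}_{>}k$ fails in general, so instead one should argue via openness of $\operatorname*{int}H$ and $k \ne 0$ to find $\varepsilon > 0$ with $y - tk + \varepsilon k \in a - H$... ) — the cleanest route is: $\operatorname*{int}H + \mathbb{R}_{\ge 0}k \subseteq \operatorname*{int}H$ and, using that $\operatorname*{int}H$ is open and translations are homeomorphisms, for $y - tk \in a - \operatorname*{int}H$ there is $\varepsilon > 0$ with $y - tk - \varepsilon(-k) = y - (t-\varepsilon)k \in a - H$, hence $\varphi_{a-H,k}(y) \le t - \varepsilon < t$. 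The equality-level formula $\{y \mid \varphi_{a-H,k}(y) = t\} = a - \operatorname*{bd}H + tk$ then follows by subtracting the strict sublevel set from the (closed) sublevel set: $\{=t\} = \{\le t\} \setminus \{< t\} = (a - H + tk) \setminus (a - \operatorname*{int}H + tk) = a - (H \setminus \operatorname*{int}H) + tk = a - \operatorname*{bd}H + tk$, where $H \setminus \operatorname*{int}H = \operatorname*{bd}H$ because $H$ is closed.

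Continuity on $\operatorname*{dom}\varphi_{a-H,k}$ then comes for free: lower semicontinuity is already in Lemma \ref{t251ua}, and upper semicontinuity follows because each strict sublevel set $\{y \mid \varphi_{a-H,k}(y) < t\} = a - \operatorname*{int}H + tk$ is open. One must be slightly careful about the points where $\varphi_{a-H,k}$ takes the value $-\infty$ and about relative versus absolute continuity on $\operatorname*{dom}$, but since $\operatorname*{dom}\varphi_{a-H,k} = a - H + \mathbb{R}k$ is itself describable via these sets, the standard lsc+usc $\Rightarrow$ continuity argument goes through on the domain. Part (a), that $\varphi_{a-H,k}$ is finite-valued on $\operatorname*{dom}\varphi_{a-H,k} \setminus (a - \operatorname*{int}H)$, is essentially Lemma \ref{t251ua}(b) sharpened: by \ref{t251ua}(e), $\varphi_{a-H,k}(y) = -\infty$ forces $y + \mathbb{R}k \subseteq a - H$, and then the $(H2)$ hypothesis $H + \mathbb{R}_{>}k \subseteq \operatorname*{int}H$ upgrades this to $y + \mathbb{R}k \subseteq a - \operatorname*{int}H$, so in particular $y \in a - \operatorname*{int}H$; contrapositively, off $a - \operatorname*{int}H$ the function never equals $-\infty$, and it never equals $+\infty$ on its domain by definition of $\operatorname*{dom}$, hence it is real-valued there.

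For (b), I would unwind the definition of strict quasiconvexity via the equality-level and strict-sublevel descriptions. If $H$ is strictly convex, take $y^1 \ne y^2$ in $\operatorname*{dom}\varphi_{a-H,k}$ and $\lambda \in (0,1)$; with $t_i := \varphi_{a-H,k}(y^i)$ (real, by properness) and $t := \max(t_1,t_2)$, one has $y^i \in a - H + t_i k \subseteq a - H + tk$, and using $k \in 0^+H$ one can write $y^i = a - h_i + tk$ with $h_i \in H$; then $\lambda y^1 + (1-\lambda)y^2 = a - (\lambda h_1 + (1-\lambda)h_2) + tk$, and the key point is to show the convex combination lands in $a - \operatorname*{int}H + tk$, which gives $\varphi_{a-H,k}(\lambda y^1 + (1-\lambda)y^2) < t$. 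Strict convexity of $H$ gives $\lambda h_1 + (1-\lambda)h_2 \in \operatorname*{int}H$ provided $h_1 \ne h_2$; if $h_1 = h_2$ one must exploit $y^1 \ne y^2$, which forces $t_1 \ne t_2$, and then the strictly-smaller of the two $t_i$ lets one perturb in the $k$-direction into $\operatorname*{int}H$ — here is where the $(H2)$ hypothesis is doing work again. Conversely, if $\varphi_{a-H,k}$ is strictly quasiconvex, I would recover strict convexity of $H$ by applying the inequality to points on $a - \operatorname*{bd}H$ (the level set $\{\varphi = 0\}$) and reading off, via the strict-sublevel formula, that proper convex combinations of distinct boundary points lie in $a - \operatorname*{int}H$. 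Part (c) is then obtained by applying (b)-type reasoning to $-\varphi_{a-H,k}$, or more directly: the concavity statement is a translation of the hypograph of $\varphi_{a-H,k}$ being convex, and using the strict-sublevel / level formulas one checks $\operatorname*{hypo}\varphi_{a-H,k}$ is convex iff $Y \setminus \operatorname*{int}H$ is convex, with the strict version mirroring (b); the finite-valuedness assumption in (c) is what lets us pass freely between sub/super-level descriptions without worrying about $\pm\infty$.

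The main obstacle I anticipate is the careful handling of the $k$-direction perturbations, i.e. genuinely using $H + \mathbb{R}_{>}k \subseteq \operatorname*{int}H$ (rather than merely $k \in 0^+H$) at exactly the right places — both in proving the openness of the strict sublevel sets and in the degenerate case $h_1 = h_2$ of the strict quasiconvexity argument. A secondary subtlety is keeping the $-\infty$ values and the restriction to $\operatorname*{dom}\varphi_{a-H,k}$ under control throughout, since several of these equivalences (strict quasiconvexity in (b), the concavity statements in (c)) are only asserted under properness or finite-valuedness precisely to sidestep pathologies with infinite values.
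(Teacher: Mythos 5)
The paper itself gives no proof of this lemma---it is quoted from \cite{Wei16a}---so there is nothing in-text to compare against; judged on its own, your outline reconstructs the standard argument for these Gerstewitz-type functionals correctly. The two displayed identities are handled the right way: the inclusion $\{\varphi_{a-H,k}<t\}\subseteq a-\operatorname*{int}H+tk$ via $H+\varepsilon k\subseteq\operatorname*{int}H$, and the reverse inclusion via openness of $\operatorname*{int}H$ and continuity of $\varepsilon\mapsto (y-tk)+\varepsilon k$ (your first attempt in that sentence is garbled, but the argument you settle on is the correct one); the level-set formula then follows by the translation-invariant set difference and $H\setminus\operatorname*{int}H=\operatorname*{bd}H$ for closed $H$; continuity is lsc (from Lemma \ref{t251ua}) plus usc from openness of the strict sublevel sets; and (a) follows from Lemma \ref{t251ua}(e) upgraded by $(H2)$ exactly as you say.

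Two small repairs. First, in the forward direction of (b) the ``degenerate case $h_1=h_2$'' you worry about is vacuous: with both points written at the common level $t=\max(t_1,t_2)$ you have $h_i=a-y^i+tk$, so $h_1=h_2$ forces $y^1=y^2$; no perturbation argument is needed there. Second, in the converse of (b) you should not restrict to points of $a-\operatorname*{bd}H$: to recover both convexity and strict convexity of $H$ you must test arbitrary distinct $h_1,h_2\in H$. This costs nothing---set $y^i:=a-h_i\in\operatorname*{dom}\varphi_{a-H,k}$, note $\varphi_{a-H,k}(y^i)\le 0$ and finite by properness, and strict quasiconvexity places $\lambda y^1+(1-\lambda)y^2$ in $\{\varphi_{a-H,k}<0\}=a-\operatorname*{int}H$, i.e.\ $\lambda h_1+(1-\lambda)h_2\in\operatorname*{int}H$. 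With those adjustments, and with (c) obtained as you indicate by transferring (b) to $\varphi_{Y\setminus(a-\operatorname*{int}H),-k}$ via Lemma \ref{-phi_Ak} under finite-valuedness, the proof is sound.
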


\begin{lemma}\label{-phi_Ak}
Assume $(H2_{a-H,k})$. Then $(H2_{Y\setminus(a-\operatorname*{int}H),-k})$ holds and
\begin{itemize}
\item[(a)] $a-\operatorname*{bd}H +\mathbb{R} k$ is the subset of $Y$ on which $\varphi _{a-H,k}$ is finite-valued as well as the subset of $Y$ on which 
$\varphi _{Y\setminus (a-\operatorname*{int}H),-k}$ is finite-valued,
\item[(b)] $\operatorname*{dom}\varphi _{a-H,k}\cap\operatorname*{dom}\varphi _{Y\setminus (a-\operatorname*{int}H),-k}=a-\operatorname*{bd}H +\mathbb{R} k$,
\item[(c)] $\varphi _{a-H,k}(y)=-\varphi _{Y\setminus (a-\operatorname*{int}H),-k}(y)\mbox{ for all } y\in a-\operatorname*{bd}H +\mathbb{R} k.$
\end{itemize}
\end{lemma}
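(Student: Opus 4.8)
The plan is to verify the auxiliary hypothesis $(H2_{Y\setminus(a-\operatorname*{int}H),-k})$ first, so that Lemmas~\ref{t251ua} and~\ref{prop-funcII} become available for $\psi:=\varphi_{Y\setminus(a-\operatorname*{int}H),-k}$ just as they are for $\varphi:=\varphi_{a-H,k}$; then (a) is read off from the level‑set formulas, (b) follows by excluding the value $-\infty$ on $\operatorname*{dom}\varphi\cap\operatorname*{dom}\psi$, and (c) is a short computation. Write $A:=a-H$, so that $\operatorname*{int}A=a-\operatorname*{int}H$, $\operatorname*{bd}A=a-\operatorname*{bd}H$, $Y\setminus(a-\operatorname*{int}H)=Y\setminus\operatorname*{int}A=a-H^{*}$ with $H^{*}:=Y\setminus\operatorname*{int}H$, and the set‑form of $(H2_{a-H,k})$ reads $A+\mathbb{R}_{<}k\subseteq\operatorname*{int}A$. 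To obtain $(H2_{Y\setminus(a-\operatorname*{int}H),-k})$: the set $H^{*}$ is closed (complement of an open set) and proper (from $H\neq Y$ one gets $H^{*}\supseteq Y\setminus H\neq\emptyset$, and from $H\neq\emptyset$ together with $H+\mathbb{R}_{>}k\subseteq\operatorname*{int}H$ one gets $\operatorname*{int}H\neq\emptyset$, hence $H^{*}\neq Y$); the condition $-k\in 0^{+}H^{*}\setminus\{0\}$ reduces to $\operatorname*{int}H+\mathbb{R}_{+}k\subseteq\operatorname*{int}H$, which for $t>0$ is $\operatorname*{int}H+tk\subseteq H+tk\subseteq\operatorname*{int}H$ and is trivial for $t=0$; and, using $\operatorname*{int}H^{*}=Y\setminus\operatorname*{cl}(\operatorname*{int}H)$ and $\operatorname*{cl}(\operatorname*{int}H)\subseteq\operatorname*{cl}H=H$, the inclusion $H^{*}+\mathbb{R}_{>}(-k)\subseteq\operatorname*{int}H^{*}$ holds, since $z\notin\operatorname*{int}H$, $t>0$ and $z-tk\in\operatorname*{cl}(\operatorname*{int}H)\subseteq H$ would give $z=(z-tk)+tk\in H+\mathbb{R}_{>}k\subseteq\operatorname*{int}H$, a contradiction.

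For (a): the set on which $\varphi$ is finite‑valued is $\bigcup_{t\in\mathbb{R}}\{y:\varphi(y)=t\}$, which by Lemma~\ref{prop-funcII} equals $\bigcup_{t\in\mathbb{R}}(a-\operatorname*{bd}H+tk)=a-\operatorname*{bd}H+\mathbb{R}k$; the same lemma applied to $\psi$ (whose ``$H$'' is $H^{*}$, ``$k$'' is $-k$, ``$a$'' is $a$) shows that $\psi$ is finite‑valued exactly on $a-\operatorname*{bd}H^{*}+\mathbb{R}(-k)=a-\operatorname*{bd}H^{*}+\mathbb{R}k$. It then suffices to observe $\operatorname*{bd}H^{*}=\operatorname*{bd}H$: since $(H2)$ forces $\operatorname*{cl}(\operatorname*{int}H)=H$ (for $h\in H$ one has $h+tk\in\operatorname*{int}H$ for $t>0$ and $h+tk\to h$ as $t\to0^{+}$), we get $\operatorname*{bd}H^{*}=\operatorname*{bd}(\operatorname*{int}H)=\operatorname*{cl}(\operatorname*{int}H)\setminus\operatorname*{int}H=H\setminus\operatorname*{int}H=\operatorname*{bd}H$.

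For (b), the inclusion ``$\supseteq$'' is immediate from (a), because a finite‑value set is contained in the effective domain. For ``$\subseteq$'' I would show that $\varphi(y)=-\infty$ forces $y\notin\operatorname*{dom}\psi$: by Lemma~\ref{t251ua}(e) this means $y+\mathbb{R}k\subseteq A$, and then $A+\mathbb{R}_{<}k\subseteq\operatorname*{int}A$ gives $y+\mathbb{R}k\subseteq\operatorname*{int}A$, so $\{s\in\mathbb{R}:y+sk\in Y\setminus\operatorname*{int}A\}=\emptyset$ and $\psi(y)=\inf\emptyset=\nu$; the symmetric argument (Lemma~\ref{t251ua}(e) for $\psi$, noting that under $(H2)$ one has $Y\setminus(a-\operatorname*{int}H^{*})=A$, so the construction is involutive) shows $\psi(y)=-\infty$ forces $y\notin\operatorname*{dom}\varphi$. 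Hence every $y\in\operatorname*{dom}\varphi\cap\operatorname*{dom}\psi$ satisfies $\varphi(y)\in\mathbb{R}$, i.e.\ $y\in a-\operatorname*{bd}H+\mathbb{R}k$ by (a). For (c), take such a $y$ and put $t:=\varphi(y)\in\mathbb{R}$: Lemma~\ref{t251ua}(a) yields $\varphi(y+sk)=\varphi(y)+s$ for all $s\in\mathbb{R}$, and Lemma~\ref{prop-funcII} at level $0$ yields $y+sk\in\operatorname*{int}A\iff\varphi(y+sk)<0\iff s<-t$, whence $\{s\in\mathbb{R}:y+sk\in Y\setminus\operatorname*{int}A\}=[-t,+\infty)$ and therefore $\psi(y)=\inf[-t,+\infty)=-t=-\varphi(y)$.

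The main obstacle is the verification of $(H2_{Y\setminus(a-\operatorname*{int}H),-k})$ together with the identity $\operatorname*{cl}(\operatorname*{int}H)=H$: one must handle the complement/translation relations $Y\setminus(a-S)=a-(Y\setminus S)$, $\operatorname*{int}(Y\setminus S)=Y\setminus\operatorname*{cl}S$, $\operatorname*{bd}(Y\setminus S)=\operatorname*{bd}S$ carefully, and must exploit $(H2)$ itself — not just closedness of $H$ — to see that interior and boundary transform as expected under $H\mapsto Y\setminus\operatorname*{int}H$. Once this is in place, parts (a)--(c) are essentially bookkeeping with the sublevel‑set descriptions already supplied by Lemmas~\ref{t251ua} and~\ref{prop-funcII}.
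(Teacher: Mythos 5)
The paper itself gives no proof of this lemma; Section~2 only restates it from the companion report \cite{Wei16a}, so there is no in-paper argument to compare against. Judged on its own, your proof is correct and complete. The verification of $(H2_{Y\setminus(a-\operatorname*{int}H),-k})$ is the real content, and you handle all three ingredients properly: closedness and properness of $H^{*}=Y\setminus\operatorname*{int}H$ (using that $(H2)$ forces $\operatorname*{int}H\neq\emptyset$), the recession condition $-k\in 0^{+}H^{*}\setminus\{0\}$ via the contrapositive $\operatorname*{int}H+\mathbb{R}_{+}k\subseteq\operatorname*{int}H$, and the key inclusion $H^{*}+\mathbb{R}_{>}(-k)\subseteq\operatorname*{int}H^{*}$ via $\operatorname*{int}H^{*}=Y\setminus\operatorname*{cl}(\operatorname*{int}H)$. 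The identity $\operatorname*{cl}(\operatorname*{int}H)=H$ (hence $\operatorname*{bd}H^{*}=\operatorname*{bd}H$), which you correctly derive from $(H2)$ and the continuity of $t\mapsto h+tk$ in a topological vector space rather than from closedness alone, is exactly the point where a careless argument would break, and you flag and resolve it. Parts (a)--(c) then follow as you say from the level-set formulas of Lemmas~\ref{t251ua} and~\ref{prop-funcII} together with the translation identity $\varphi_{a-H,k}(y+sk)=\varphi_{a-H,k}(y)+s$; the exclusion of the value $-\infty$ on the intersection of the domains in (b) is argued correctly via Lemma~\ref{t251ua}(e). No gaps.
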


\begin{lemma}\label{prop-finite_ua}
Assume $(H1_{a-H,k})$ holds and $k\in \operatorname*{int}0^+H$. \\
Then $(H2_{a-H,k})$ holds, and $\varphi_{a-H,k}$ is continuous and finite-valued.\\
If $Y$ is a Banach space, then $\varphi_{a-H,k}$ is Lipschitz continuous.
\end{lemma}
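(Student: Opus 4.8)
The plan is to prove the three assertions in order, leaning on the lemmas already available and reserving the real work for the Lipschitz claim.

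First I would verify that $k\in\operatorname*{int}0^+H$ forces $(H2_{a-H,k})$, i.e. $H+\mathbb{R}_{>}k\subseteq\operatorname*{int}H$. Choose an open neighbourhood $W$ of $0$ in $Y$ with $k+W\subseteq 0^+H$. For $h\in H$ and $t>0$ we then have $h+tk+tW=h+t(k+W)\subseteq h+t\cdot 0^+H\subseteq h+0^+H\subseteq H$, where the inclusions use that $0^+H$ is a cone and that $h+u\in H$ for every $u\in 0^+H$ (take the parameter $1$ in the definition of the recession cone). Since $tW$ is again a neighbourhood of $0$, this shows $h+tk\in\operatorname*{int}H$, which is exactly $(H2_{a-H,k})$.

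Finiteness and continuity then come essentially for free. In a topological vector space every topological interior point is an algebraic interior point, so $k\in\operatorname*{int}0^+H\subseteq\operatorname*{core}0^+H$, and Lemma~\ref{t251ua}(c) gives that $\varphi_{a-H,k}$ is finite-valued; in particular $\operatorname*{dom}\varphi_{a-H,k}=Y$. Since $(H2_{a-H,k})$ holds, Lemma~\ref{prop-funcII} yields continuity of $\varphi_{a-H,k}$ on its domain, hence on all of $Y$.

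The substantive part is the Lipschitz estimate when $Y$ is a Banach space. Because $k\in\operatorname*{int}0^+H$, there is $\rho>0$ with $\{u\in Y\mid \|u-k\|\le\rho\}\subseteq 0^+H$. Fix $y^1,y^2\in Y$ with $y^1\neq y^2$, set $t_1:=\varphi_{a-H,k}(y^1)\in\mathbb{R}$ and $\delta:=y^2-y^1$, and put $s:=t_1+\|\delta\|/\rho>t_1$. By Lemma~\ref{t251ua}(a), $y^1\in a-H+t_1k$, say $y^1=a-h_1+t_1k$ with $h_1\in H$. Since $\|\,(\delta/(s-t_1))\,\|=\|\delta\|/(s-t_1)=\rho$, the vector $k-\delta/(s-t_1)$ lies in the ball above, hence in $0^+H$; multiplying by the scalar $s-t_1>0$ and using that $0^+H$ is a cone gives $(s-t_1)k-\delta\in 0^+H$, and therefore $h_1+(s-t_1)k-\delta\in H$. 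Consequently
\[ y^2=y^1+\delta=a-\bigl(h_1+(s-t_1)k-\delta\bigr)+sk\in a-H+sk, \]
so $\varphi_{a-H,k}(y^2)\le s=\varphi_{a-H,k}(y^1)+\tfrac1\rho\|y^1-y^2\|$. Interchanging the roles of $y^1$ and $y^2$ yields the reverse inequality, whence $|\varphi_{a-H,k}(y^1)-\varphi_{a-H,k}(y^2)|\le\tfrac1\rho\|y^1-y^2\|$, i.e. $\varphi_{a-H,k}$ is Lipschitz with constant $1/\rho$.

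I expect the only real obstacle to be carrying out the last step cleanly: one must invoke the exact sublevel-set identity of Lemma~\ref{t251ua}(a) to land $y^1$ in $a-H+t_1k$, and then use the cone property of $0^+H$ to pass from a norm ball around $k$ to the recession vector $(s-t_1)k-\delta$. Everything else is routine once $(H2_{a-H,k})$ and finite-valuedness are established; note also that completeness of $Y$ is not actually used, only that $\operatorname*{int}0^+H$ contains a norm ball.
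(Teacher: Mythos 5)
Your argument is correct and complete. Note that the paper itself gives no proof of this lemma --- Section \ref{s-uni} only restates results cited from \cite{Wei16a} --- so there is no in-text argument to compare against; what you have written is a valid self-contained derivation. The three steps all check out: the neighbourhood computation $h+t(k+W)\subseteq h+0^+H\subseteq H$ correctly upgrades $k\in\operatorname*{int}0^+H$ to $(H2_{a-H,k})$; finite-valuedness via $\operatorname*{int}0^+H\subseteq\operatorname*{core}0^+H$ and Lemma \ref{t251ua}(c) together with continuity from Lemma \ref{prop-funcII} is exactly right; and the Lipschitz estimate is the standard one, correctly executed --- the identity $\{y\mid\varphi_{a-H,k}(y)\le t_1\}=a-H+t_1k$ places $y^1$, the ball of radius $\rho$ about $k$ inside $0^+H$ gives $(s-t_1)k-\delta\in 0^+H$, and the recession property moves $y^2$ into $a-H+sk$, yielding the constant $1/\rho$. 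Your closing observation is also a genuine (if minor) sharpening of the statement as printed: the argument uses only that $Y$ is a normed space, so completeness is superfluous in the Lipschitz claim.
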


\begin{lemma}\label{Fact4}
Assume that $Y$ is a topological vector space, $a\in Y$, $H$ is a proper closed convex subset of $\;Y$ and $k\in 0^+H\setminus (-0^+H)$.\\
Then $(H1_{a-H,k})$ is fulfilled and the following statements are valid.
\begin{itemize}
\item[(a)] $\varphi _{a-H,k}$ is convex, proper and lower semicontinuous on $\operatorname*{dom}\varphi _{a-H,k}$.
\item[(b)] If $\operatorname*{int}H\not=\emptyset$, then $\varphi _{a-H,k}$ is continuous on $\operatorname*{int}\operatorname*{dom}\varphi _{a-H,k}$.
\item[(c)] If $\;Y$ is a Banach space, then $\varphi_{a-H,k}$ is locally Lipschitz on $\operatorname*{int}\operatorname*{dom}\varphi_{a-H,k}$.
\end{itemize}
\end{lemma}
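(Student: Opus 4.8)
The plan is to first verify that $(H1_{a-H,k})$ holds, then to read off convexity and lower semicontinuity essentially from Lemma \ref{t251ua}, settle properness separately, and finally derive (b) and (c) from classical continuity theorems for convex functions.

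First I would check $(H1_{a-H,k})$. Since $H$ is nonempty (being a proper subset), its recession cone contains $0$, so $0\in -0^+H$; hence the hypothesis $k\in 0^+H\setminus(-0^+H)$ already forces $k\neq 0$, and of course it gives $k\in 0^+H$. Together with the assumptions that $Y$ is a topological vector space, $a\in Y$ and $H$ is closed and proper, this is exactly $(H1_{a-H,k})$. Consequently Lemma \ref{t251ua} applies: $\operatorname*{dom}\varphi_{a-H,k}=a-H+\mathbb{R}k\neq\emptyset$, $\varphi_{a-H,k}$ is lower semicontinuous on its domain, and $\{y\in Y\mid\varphi_{a-H,k}(y)\leq t\}=a-H+tk$ for every $t\in\mathbb{R}$.

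For the convexity claim in (a) I would argue via the epigraph: by Lemma \ref{t251ua}(a), for $t\in\mathbb{R}$ we have $(y,t)\in\operatorname*{epi}\varphi_{a-H,k}$ iff $y\in a-H+tk$, i.e. iff $a+tk-y\in H$; thus $\operatorname*{epi}\varphi_{a-H,k}$ is the inverse image of the convex set $H$ under the continuous affine map $(y,t)\mapsto a+tk-y$, hence convex, so $\varphi_{a-H,k}$ is convex (the same inverse image is closed because $H$ is, which re-proves lower semicontinuity). It then remains to prove properness; since the domain is already nonempty, I only need to exclude the value $-\infty$. If $\varphi_{a-H,k}(y)=-\infty$ for some $y$, then by Lemma \ref{t251ua}(e) the whole line $y+\mathbb{R}k$ lies in $a-H$, equivalently $(a-y)+\mathbb{R}k\subseteq H$; since $H$ is closed and convex, a short limiting argument — for $q\in H$ and $s\in\mathbb{R}$ the points $(1-\tfrac{1}{n})q+\tfrac{1}{n}\bigl((a-y)+nsk\bigr)$ lie in $H$ and converge to $q+sk$ — shows $q+\mathbb{R}k\subseteq H$ for every $q\in H$, so $k\in 0^+H$ and $-k\in 0^+H$, i.e. $k\in -0^+H$, contradicting the hypothesis. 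Hence $\varphi_{a-H,k}$ never attains $-\infty$, so it is finite-valued on its nonempty domain and therefore proper.

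For (b), assuming $\operatorname*{int}H\neq\emptyset$, I would use that $a-\operatorname*{int}H+k$ is a nonempty open set on which $\varphi_{a-H,k}\leq 1$ by Lemma \ref{t251ua}(a); in particular this open set lies in $\operatorname*{dom}\varphi_{a-H,k}$, hence in $\operatorname*{int}\operatorname*{dom}\varphi_{a-H,k}$. Since a proper convex function on a topological vector space that is bounded above on a nonempty open set is continuous on the interior of its domain, (b) follows from (a). For (c), with $Y$ a Banach space I would invoke the classical fact that a proper, convex, lower semicontinuous function on a Banach space is automatically continuous on $\operatorname*{int}\operatorname*{dom}\varphi_{a-H,k}$ (a Baire-category argument: one of the closed convex sublevel sets is non-meager, hence has nonempty interior, so $\varphi_{a-H,k}$ is bounded above near a point of $\operatorname*{int}\operatorname*{dom}\varphi_{a-H,k}$), combined with the fact that a finite convex function continuous on an open subset of a normed space is locally Lipschitz there. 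The only genuinely delicate step in the whole argument — everything else being routine convex-analysis bookkeeping — is the exclusion of the value $-\infty$ in (a), i.e. correctly transporting the hypothesis $k\notin -0^+H$ through Lemma \ref{t251ua}(e).
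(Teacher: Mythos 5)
The paper does not prove Lemma \ref{Fact4} at all: it appears in Section \ref{s-uni}, which only \emph{summarizes} results whose proofs are deferred to \cite{Wei16a}. So there is no in-paper argument to compare against; judged on its own, your proof is correct and self-contained. The verification of $(H1_{a-H,k})$ (using $0\in -0^+H$ to get $k\neq 0$) is right; the identification of $\operatorname*{epi}\varphi_{a-H,k}$ with the preimage of $H$ under $(y,t)\mapsto a+tk-y$ cleanly gives convexity and closedness of the epigraph; and you correctly isolate the one non-routine point, namely excluding the value $-\infty$. Your limiting argument there — that a full line $q'+\mathbb{R}k$ contained in the closed convex set $H$ forces $q+\mathbb{R}k\subseteq H$ for every $q\in H$, hence $k\in 0^+H\cap(-0^+H)$, contradicting the hypothesis — is the standard recession-cone fact and is carried out correctly via Lemma \ref{t251ua}(e). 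Parts (b) and (c) are handled by the classical theorems you cite (a proper convex function bounded above on a nonempty open set is continuous on $\operatorname*{int}\operatorname*{dom}$; Baire category plus closedness of the sublevel sets $a-H+nk$ in a Banach space; local Lipschitz continuity of continuous convex functions on normed spaces), and the open set $a-\operatorname*{int}H+k$ on which $\varphi_{a-H,k}\le 1$ is the right witness for boundedness above. I see no gaps.
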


\begin{lemma}\label{c251a-C}
Assume that $Y$ is a topological vector space, $H\subset Y$ is a non-trivial closed convex cone, $k\in H\setminus\{0\}$ and $a\in Y$.
Then $(H1_{a-H,k})$ holds, and $\varphi_{a-H,k}$ is a convex $H$-monotone functional, which is lower semicontinuous on its domain.
\begin{itemize}
\item[(a)] If $k\in H\cap (-H)$, then $\operatorname*{dom}\varphi_{a-H,k}=a-H$ and $\varphi_{a-H,k}$ does not attain any real value.
\item[(b)] If $k\in H \setminus (-H)$, then $\varphi_{a-H,k}$ is proper and strictly $(\operatorname*{core} H)$-monotone.
\item[(c)] If $k\in \operatorname*{core} H$, then $\varphi_{a-H,k}$ is finite-valued.
\item[(d)] If $k\in \operatorname*{int} H$, then 
$(H2_{a-H,k})$ holds and $\varphi_{a-H,k}$ is continuous, finite-valued and strictly $(\operatorname*{int} H)$-monotone.
\item[(e)] $\varphi_{a-H,k}$ is subadditive $\iff a\in -H$.
\item[(f)] $\varphi_{a-H,k}$ is sublinear $\iff a\in H\cap(-H)$.
\end{itemize}
\end{lemma}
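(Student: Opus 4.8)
The plan is to derive the whole lemma from the results of this section by exploiting the single fact that, for a convex cone $H$, one has $0^+H=H$.

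\textbf{Step 1: the basic identity and the general assertions.} I would first note that $0^+H=H$: the inclusion $H\subseteq 0^+H$ holds because a convex cone satisfies $H+H\subseteq H$ and $\mathbb{R}_+H\subseteq H$, while $0^+H\subseteq H$ follows by taking $a:=0\in H$ and $t:=1$ in the definition of $0^+H$. Since $H$ is a non-trivial (hence nonempty, proper) closed convex set and $k\in H\setminus\{0\}=0^+H\setminus\{0\}$, this gives $(H1_{a-H,k})$. For convexity of $\varphi_{a-H,k}$ I would use Lemma~\ref{t251ua}(a), which yields $\operatorname*{epi}\varphi_{a-H,k}=\{(y,t)\in Y\times\mathbb{R}\mid a-y+tk\in H\}$; this is the inverse image of the convex set $H$ under the affine map $(y,t)\mapsto a-y+tk$, hence convex, so $\varphi_{a-H,k}$ is convex. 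Lemma~\ref{t251ua}(h) with $D:=H$ (permissible since $H+H\subseteq H$) gives $H$-monotonicity, and lower semicontinuity on $\operatorname*{dom}\varphi_{a-H,k}$ is already part of Lemma~\ref{t251ua}.

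\textbf{Step 2: parts (a)--(d).} These only require specializing the earlier lemmas, after recording three elementary properties of the non-trivial convex cone $H$: $0\notin\operatorname*{core}H$ (otherwise $H=Y$), $\operatorname*{core}H+H\subseteq\operatorname*{core}H$, and $\operatorname*{int}H+H\subseteq\operatorname*{int}H$. Then (a): since $k\in H\cap(-H)=0^+H\cap(-0^+H)$, Lemma~\ref{t251ua}(d) shows $\varphi_{a-H,k}$ attains no real value; moreover $\mathbb{R}k\subseteq H\cap(-H)$ forces $-H+\mathbb{R}k\subseteq-H$, so $\operatorname*{dom}\varphi_{a-H,k}=a-H+\mathbb{R}k=a-H$. (b): here $k\in H\setminus(-H)=0^+H\setminus(-0^+H)$, so Lemma~\ref{Fact4}(a) gives properness, and Lemma~\ref{t251ua}(i) applied with $F:=\operatorname*{dom}\varphi_{a-H,k}$ and $D:=\operatorname*{core}H$ (its hypothesis $H+(\operatorname*{core}H\setminus\{0\})\subseteq\operatorname*{core}H$ reducing to $H+\operatorname*{core}H\subseteq\operatorname*{core}H$) gives strict $(\operatorname*{core}H)$-monotonicity. (c): $k\in\operatorname*{core}H=\operatorname*{core}0^+H$, so $\varphi_{a-H,k}$ is finite-valued by Lemma~\ref{t251ua}(c). (d): $k\in\operatorname*{int}H=\operatorname*{int}0^+H$, so Lemma~\ref{prop-finite_ua} yields $(H2_{a-H,k})$, continuity and finite-valuedness, and Lemma~\ref{t251ua}(i) with $D:=\operatorname*{int}H$ (using $H+\operatorname*{int}H\subseteq\operatorname*{int}H\subseteq\operatorname*{core}H$) gives strict $(\operatorname*{int}H)$-monotonicity.

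\textbf{Step 3: parts (e) and (f).} For (f) I would invoke Lemma~\ref{t251ua}(g): $\varphi_{a-H,k}$ is sublinear iff $a-H$ is a convex cone; and the (always convex) set $a-H$ is a cone iff $a\in H\cap(-H)$, because if $a-H$ is a cone then $0\in a-H$ forces $a\in H$ and $2a\in a-H$ forces $a\in-H$, while if $a\in H\cap(-H)$ then $a-H=-H$, a convex cone. For (e) I would use the relation $\varphi_{a-H,k}(y)=\varphi_{-H,k}(y-a)$, immediate from the definition, together with the fact that $\varphi_{-H,k}$ is sublinear (Lemma~\ref{t251ua}(g) with $a=0$, since $-H$ is a convex cone), hence subadditive, and $\{y\mid\varphi_{-H,k}(y)\le 0\}=-H$ from Lemma~\ref{t251ua}(a). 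If $a\in-H$, then $\varphi_{-H,k}(a)\le 0$, and writing $y^1+y^2-a=(y^1-a)+(y^2-a)+a$ and using subadditivity of $\varphi_{-H,k}$ twice gives $\varphi_{a-H,k}(y^1+y^2)\le\varphi_{a-H,k}(y^1)+\varphi_{a-H,k}(y^2)+\varphi_{-H,k}(a)\le\varphi_{a-H,k}(y^1)+\varphi_{a-H,k}(y^2)$. Conversely, subadditivity applied to $y^1=y^2=a$ gives $\varphi_{-H,k}(a)=\varphi_{a-H,k}(2a)\le 2\varphi_{a-H,k}(a)=2\varphi_{-H,k}(0)\le 0$, hence $a\in-H$.

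Everything except (e) is a mechanical specialization once $0^+H=H$ and the three elementary cone facts are in hand; the delicate point is carrying out the inequalities in (e) within $\overline{\mathbb{R}}_\nu$, so that the degenerate cases (e.g. $k\in H\cap(-H)$, where $\varphi_{-H,k}(0)=-\infty$, or $a\notin-H$, where a value $\nu$ can occur) are treated consistently with the conventions of \cite{Wei15}.
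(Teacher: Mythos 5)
Your proposal is correct as far as this paper's toolbox allows one to check it; note that the paper itself gives no proof of Lemma~\ref{c251a-C} (Section~\ref{s-uni} only quotes it from \cite{Wei16a}), so your argument is necessarily an independent reconstruction rather than a match to an in-text proof. What you do is reduce everything to the single identity $0^+H=H$ for a nonempty convex cone and then specialize Lemmas~\ref{t251ua}, \ref{prop-finite_ua}, \ref{Fact4} and \ref{A-shift}; this works, and the three cone facts you isolate ($0\notin\operatorname*{core}H$, $H+\operatorname*{core}H\subseteq\operatorname*{core}H$, $H+\operatorname*{int}H\subseteq\operatorname*{int}H$) are exactly what is needed to feed Lemma~\ref{t251ua}(i) in parts (b) and (d). Two points deserve a remark. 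First, your convexity argument identifies $\operatorname*{epi}\varphi_{a-H,k}$ as the affine preimage of $H$ via Lemma~\ref{t251ua}(a); this is clean, but whether ``convex epigraph'' is literally the definition of convexity for $\overline{\mathbb{R}}_{\nu}$-valued maps depends on the conventions of \cite{Wei15}, so a one-line reconciliation with the inequality form of convexity on $\operatorname*{dom}\varphi_{a-H,k}=a-H+\mathbb{R}k$ (itself convex) would make this airtight. Second, part~(e) is the only place where genuine care is needed, and you handle it correctly: the sublevel-set identity $\{y\mid\varphi_{-H,k}(y)\le 0\}=-H$ together with two applications of subadditivity of the sublinear functional $\varphi_{-H,k}$ gives both directions, and your test point $y^1=y^2=a$ for the converse avoids the value $\nu$ because $a\in\operatorname*{dom}\varphi_{a-H,k}$ always holds (as $tk\in H$ for $t\ge 0$), with $\varphi_{-H,k}(0)\in\{0,-\infty\}$ covering both the pointed and the non-pointed case. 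I see no gap.
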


For cones $H$, functionals $\varphi_{a-H,k}$ are closely related to norms.

\begin{lemma}\label{p-ordint-varphi}
Suppose that $Y$ is a topological vector space, $H \subset Y$ a non-trivial closed convex pointed cone with $k\in\operatorname*{core}H$, $a\in Y$. Denote by $\|\cdot \|_{H,k}$ the norm which is given as the Minkowski functional of the order interval $[-k,k]_H$. Then
\[ 
\| y-a\| _{H,k}=\varphi_{a-H,k}(y)\mbox{ for all } y\in a+H.
\]
\end{lemma}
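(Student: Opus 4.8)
The plan is first to translate the problem to the origin. Since $y\in a-H+tk$ is equivalent to $y-a\in -H+tk$, we have $\varphi_{a-H,k}(y)=\varphi_{-H,k}(y-a)$ for every $y\in Y$; setting $u:=y-a$, the assertion to be proved becomes
\[
\|u\|_{H,k}=\varphi_{-H,k}(u)\qquad\text{for all }u\in H .
\]
Observe also that $0\notin\operatorname*{core}H$ (otherwise $H=Y$, contradicting non-triviality), hence $k\neq 0$; and that $k\in\operatorname*{core}H$ yields $0\in\operatorname*{core}[-k,k]_H$, so $\|\cdot\|_{H,k}$ is indeed a real-valued norm.

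Next I would unravel the two sides. By definition of the order interval, $[-k,k]_H=(-k+H)\cap(k-H)$, and since $H$ is a cone, $s[-k,k]_H=(-sk+H)\cap(sk-H)$ for $s>0$; hence $u\in s[-k,k]_H$ iff $u+sk\in H$ and $sk-u\in H$. Because $u\in H$, $k\in H$ and $H$ is a convex cone, the condition $u+sk\in H$ is automatic, so
\[
\|u\|_{H,k}=\inf\{s>0\mid sk-u\in H\}.
\]
On the other hand, $\varphi_{-H,k}(u)=\inf\{t\in\mathbb{R}\mid u\in -H+tk\}=\inf\{t\in\mathbb{R}\mid tk-u\in H\}$.

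The only gap between the two infima is the range of the parameter ($s>0$ versus $t\in\mathbb{R}$), and here I would use pointedness of $H$. If $t\le 0$ and $tk-u\in H$, then $tk=(tk-u)+u\in H$; for $t<0$ this forces $-k\in H$, hence $k\in H\cap(-H)=\{0\}$, a contradiction, while for $t=0$ it forces $u\in H\cap(-H)=\{0\}$. Thus, for $u\neq 0$ the sets $\{t\in\mathbb{R}\mid tk-u\in H\}$ and $\{t>0\mid tk-u\in H\}$ coincide, so the two infima are equal; this common set is nonempty (since $k\in\operatorname*{core}H$ yields $k-\tfrac{1}{t}u\in H$, i.e. $tk-u\in H$, for all sufficiently large $t$) and bounded below by $0$, so both sides are finite and equal. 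For $u=0$ one checks directly that $tk\in H$ exactly for $t\ge 0$, so $\varphi_{-H,k}(0)=0=\|0\|_{H,k}$. Combining the two cases yields the claim.

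I expect no serious obstacle here: the conceptual step is the pointedness argument ruling out non-positive $t$, and the remainder consists of routine verifications (that $[-k,k]_H$ is absorbing, so $\|\cdot\|_{H,k}$ is real-valued, and that the relevant infimum is taken over a nonempty set bounded below by $0$).
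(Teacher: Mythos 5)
Your proof is correct: the translation to $a=0$, the identification $\|u\|_{H,k}=\inf\{s>0\mid sk-u\in H\}$ for $u\in H$ (using that $u+sk\in H$ is automatic), and the pointedness argument excluding $t\le 0$ together with the core condition guaranteeing nonemptiness give exactly the identity claimed. The paper itself states this lemma without proof, deferring to \cite{Wei16a}, so there is no in-paper argument to compare against; your direct unfolding of both infima is the natural route and I see no gap in it.
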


Let us now investigate the influence of the choice of $k$ on the values of $\varphi _{a-H, k}$.
\begin{lemma}\label{t-scale}
Assume $(H1_{a-H,k})$, and consider some arbitrary $\lambda\in\mathbb{R}_{>}$.\\
Then $(H1_{a-H,\lambda k})$ holds, $\operatorname*{dom}\varphi _{a-H,\lambda k}=\operatorname*{dom}\varphi _{a-H,k}$ and
\begin{displaymath}
\varphi_{a-H,\lambda k}(y)=\frac{1}{\lambda} \varphi_{a-H,k}(y) \mbox{ for all } y\in Y.
\end{displaymath}
$\varphi_{a-H,\lambda k}$ is 
proper, finite-valued, continuous, lower semicontinuous, upper semicontinuous, convex, concave, strictly quasiconvex, subadditive, superadditive, affine, linear, sublinear, positively homogeneous, odd or homogeneous if and only if $\varphi_{a-H,k}$ has the same property. 
For $B\subset Y$, the functional $\varphi_{a-H,\lambda k}$ is 
$B$-monotone or strictly $B$-monotone if and only
if $\varphi_{a-H,k}$ has the same property.
\end{lemma}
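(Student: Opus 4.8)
The plan is to reduce the whole lemma to the single scaling identity $\varphi_{a-H,\lambda k}(y)=\tfrac1\lambda\varphi_{a-H,k}(y)$; once this is available, every remaining assertion is a routine consequence of the fact that $r\mapsto r/\lambda$ is an order isomorphism of $\overline{\mathbb R}$ which is additive wherever sums are defined, positively homogeneous, and a homeomorphism of $\mathbb R$.

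First I would verify $(H1_{a-H,\lambda k})$. Here $Y$, $a$ and $H$ are literally the same objects as in $(H1_{a-H,k})$, so only $\lambda k\in 0^+H\setminus\{0\}$ has to be checked: $0^+H$ is a cone and $\lambda\in\mathbb R_{>}\subseteq\mathbb R_{+}$, hence $\lambda k\in\mathbb R_{+}\,0^+H\subseteq 0^+H$, and $\lambda k\neq 0$ since $\lambda>0$ and $k\neq 0$.

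Next I would prove the scaling identity. Fix $y\in Y$. For $t\in\mathbb R$ we have $y\in a-H+t(\lambda k)$ if and only if $y\in a-H+(\lambda t)k$, and since $\lambda>0$ the map $t\mapsto\lambda t$ is a bijection of $\mathbb R$ onto itself. Hence the sets $\{t\in\mathbb R\mid y\in a-H+t\lambda k\}$ and $\{s\in\mathbb R\mid y\in a-H+sk\}$ correspond under $s=\lambda t$ and are, in particular, empty simultaneously. Taking infima, using $\inf(\lambda^{-1}S)=\lambda^{-1}\inf S$ for $S\subseteq\mathbb R$ together with the conventions of \cite{Wei15} (namely $\inf\emptyset=\nu$, $\lambda^{-1}\nu=\nu$ and $\lambda^{-1}(\pm\infty)=\pm\infty$ for $\lambda\in\mathbb R_{>}$), gives $\varphi_{a-H,\lambda k}(y)=\tfrac1\lambda\varphi_{a-H,k}(y)$. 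Equality of the effective domains is then immediate, since $r\mapsto r/\lambda$ maps $\mathbb R\cup\{-\infty\}$ onto itself and fixes $\nu$; alternatively it follows from Lemma \ref{t251ua}, because $\operatorname*{dom}\varphi_{a-H,k}=a-H+\mathbb Rk=a-H+\mathbb R(\lambda k)=\operatorname*{dom}\varphi_{a-H,\lambda k}$.

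Finally, for the list of properties I would invoke the properties of $r\mapsto r/\lambda$ listed above. Consequently $\varphi_{a-H,\lambda k}=\tfrac1\lambda\varphi_{a-H,k}$ is finite-valued on a set precisely when $\varphi_{a-H,k}$ is (so properness and finite-valuedness transfer), is continuous resp.\ lower resp.\ upper semicontinuous precisely when $\varphi_{a-H,k}$ is, is convex, concave, strictly quasiconvex, sub-/super-additive, affine, linear, sublinear, positively homogeneous, odd or homogeneous precisely when $\varphi_{a-H,k}$ is, and is $B$-monotone resp.\ strictly $B$-monotone precisely when $\varphi_{a-H,k}$ is, using $r_1\le r_2\Leftrightarrow r_1/\lambda\le r_2/\lambda$ and $r_1<r_2\Leftrightarrow r_1/\lambda<r_2/\lambda$. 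Each of these is a genuine equivalence: replacing $\lambda$ by $1/\lambda>0$ and exchanging the roles of $k$ and $\lambda k$ yields the reverse implication. The only slightly delicate point in the whole argument is the careful handling of the values $\nu$ and $\pm\infty$ in the infimum and in the property list, and this is exactly what the conventions recalled at the beginning of this section take care of.
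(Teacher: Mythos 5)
Your proof is correct and complete: establishing the pointwise identity $\varphi_{a-H,\lambda k}=\tfrac1\lambda\varphi_{a-H,k}$ via the bijection $s=\lambda t$ on the index sets (with the conventions for $\nu$ and $-\infty$), and then transferring every listed property through the order-preserving, additive, positively homogeneous homeomorphism $r\mapsto r/\lambda$, is exactly the natural argument. The paper itself states this lemma without proof, deferring to \cite{Wei16a}, so there is no in-paper proof to compare against; your handling of the only delicate points (that $0^+H$ is a cone, so $\lambda k\in 0^+H\setminus\{0\}$, and the symmetric role of $\lambda$ and $1/\lambda$ for the ``only if'' directions) is sound.
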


The lemma underlines that replacing $k$ by another vector into the same direction just scales the functional. Consequently, $\varphi _{A, k}$ and $\varphi _{A, \lambda k}$, $\lambda >0$, take optimal values on some set $F\subset Y$ at the same elements of $F$. Hence, it is sufficient to consider only one vector $k$ per direction in optimization problems, e.g., to restrict $k$ to unit vectors if $Y$ is a normed space.

The functionals $\varphi _{a-H,k}$ can be described by $\varphi _{-H,k}$ and thus by a sublinear function if $H$ is a convex cone.
\begin{lemma}\label{A-shift}
Assume $(H1_{-H,k})$, and consider some arbitrary $a\in Y$.\\
Then $(H1_{a-H,k})$ is satisfied, $\operatorname*{dom}\varphi _{a-H,k}=a+\operatorname*{dom}\varphi _{-H,k}$ and
\begin{displaymath}
\varphi_{a-H,k}(y)=\varphi_{-H,k}(y-a) \mbox{ for all } y\in Y.
\end{displaymath}
$\varphi_{a-H,k}$ is 
proper, finite-valued, continuous, lower semicontinuous, upper semicontinuous, convex, concave, strictly quasiconvex or affine if and only if $\varphi_{-H,k}$ has the same property.\\
For $B\subset Y$, $\varphi_{a-H,k}$ is
$B$-monotone or strictly $B$-monotone if and only if $\varphi_{-H,k}$ has the same property.
\end{lemma}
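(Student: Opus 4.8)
The plan is to reduce the whole statement to the single pointwise identity $\varphi_{a-H,k}(y)=\varphi_{-H,k}(y-a)$ and then to observe that the translation $y\mapsto y-a$ is an affine homeomorphism of $Y$, under which all the listed properties are invariant.

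First I would check that $(H1_{a-H,k})$ holds. The requirements ``$Y$ is a topological vector space'', ``$H$ is a closed proper subset of $Y$'' and ``$k\in 0^+H\setminus\{0\}$'' are literally those contained in $(H1_{-H,k})$, and ``$a\in Y$'' is part of the hypothesis; hence $(H1_{a-H,k})$ is satisfied and $\varphi_{a-H,k}$ is well defined. Next, for fixed $y\in Y$ and $t\in\mathbb{R}$ one has $y\in a-H+tk\iff y-a\in -H+tk$, so the index sets $\{t\in\mathbb{R}\mid y\in a-H+tk\}$ and $\{t\in\mathbb{R}\mid y-a\in -H+tk\}$ coincide. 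Taking infima, with the convention $\inf\emptyset=\nu$, gives $\varphi_{a-H,k}(y)=\varphi_{-H,k}(y-a)$ for every $y\in Y$ — including the points where the common value is $\nu$ or $\pm\infty$. From this and the definition of the effective domain (item (2) following the definition of $\varphi_{a-H,k}$), $y\in\operatorname*{dom}\varphi_{a-H,k}$ iff $y-a\in\operatorname*{dom}\varphi_{-H,k}$, i.e.\ $\operatorname*{dom}\varphi_{a-H,k}=a+\operatorname*{dom}\varphi_{-H,k}$; in particular this set is nonempty, and $\varphi_{a-H,k}$ is finite-valued, resp.\ proper, exactly when $\varphi_{-H,k}$ is.

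It then remains to transfer the remaining properties. Writing $T\colon Y\to Y$, $Ty:=y-a$, we have that $T$ is an affine bijection and a homeomorphism with $\varphi_{a-H,k}=\varphi_{-H,k}\circ T$. Continuity and lower, resp.\ upper, semicontinuity pass from $\varphi_{-H,k}$ to $\varphi_{a-H,k}$ (and back) because $T$ and $T^{-1}$ are continuous. For convexity, concavity, strict quasiconvexity and affineness, one uses that $\operatorname*{dom}\varphi_{a-H,k}=a+\operatorname*{dom}\varphi_{-H,k}$ is convex iff $\operatorname*{dom}\varphi_{-H,k}$ is, and that $T(\lambda y^1+(1-\lambda)y^2)=\lambda Ty^1+(1-\lambda)Ty^2$ for $\lambda\in(0,1)$, together with $y^1\ne y^2\iff Ty^1\ne Ty^2$; thus each defining (in)equality for $\varphi_{a-H,k}$ at $y^1,y^2$ is precisely the corresponding one for $\varphi_{-H,k}$ at $Ty^1,Ty^2$. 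Finally, for $B\subset Y$ and $y^1,y^2\in\operatorname*{dom}\varphi_{a-H,k}$ we have $Ty^2-Ty^1=y^2-y^1$, so the condition $y^2-y^1\in B$ (resp.\ $\in B\setminus\{0\}$) in Definition~\ref{d-mon} carries over verbatim to $Ty^1,Ty^2\in\operatorname*{dom}\varphi_{-H,k}$, showing that $B$-monotonicity, resp.\ strict $B$-monotonicity, of the two functionals are equivalent.

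The argument is essentially bookkeeping, and I do not expect a genuine obstacle; the only point needing a little care is that the identity $\varphi_{a-H,k}(y)=\varphi_{-H,k}(y-a)$ must be asserted for all $y\in Y$ with values in $\overline{\mathbb{R}}_{\nu}$ — not merely on the domains — since the transfer of properness and finite-valuedness relies on it at the points where the value is $\nu$ or $\pm\infty$ as well. This is immediate from the convention $\inf\emptyset=\nu$ and from the fact that the two sets of admissible real parameters $t$ coincide identically.
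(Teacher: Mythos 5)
Your proposal is correct: the pointwise identity $\varphi_{a-H,k}(y)=\varphi_{-H,k}(y-a)$, obtained from $y\in a-H+tk\iff y-a\in -H+tk$ together with the convention $\inf\emptyset=\nu$, immediately yields the domain formula, and the remaining properties transfer because translation by $a$ is an affine homeomorphism preserving differences $y^2-y^1$. The paper states this lemma without proof (citing an earlier report), and your argument is exactly the standard one intended there; the only point deserving the care you already gave it is asserting the identity on all of $Y$, including points with value $\nu$ or $\pm\infty$.
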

\smallskip

\section{Decision Making and Vector Optimization}\label{s-decision}

Consider the following general decision problem:\\
A decision maker (DM) wants to make a decision by choosing an element from a set $S$ of feasible decisions, where the outcomes of the decisions are given by some function $f:S\to Y$, $Y$ being some arbitrary set.

What is a best decision depends on the DM's preferences in the set $F:=f(S)$ of outcomes.  
A relation $\succ$ is called a \textbf{strict preference relation} on $Y$ if $\forall y^1, y^2\in Y$:
$$y^1\succ y^2\iff y^1\mbox{ is strictly preferred to }y^2.$$
$\succ$ is said to be a \textbf{weak preference relation} on $Y$ if $\forall y^1, y^2\in Y$:
$$y^1\succ y^2\iff y^2\mbox{ is not preferred to }y^1.$$

Let $\succ$ denote the DM's strict or weak preference relation on $Y$. Then the set of decision outcomes which are optimal for the DM is just $\operatorname*{Min}(F,\succ ):=\{y^0\in F\mid \forall y\in F: (y\succ y^0\Rightarrow y^0\succ y)\}$, i.e., optimal decisions are the elements of $\operatorname*{Min}_f(S,\succ ):=\{x\in S\mid f(x)\in \operatorname*{Min}(f(S),\succ )\}$.

Note that $\succ$ consists of the preferences the DM is aware of. This relation is refined during the decision process, but fixed in each single step of the decision process, where information about $\operatorname*{Min}(F,\succ )$ should support the DM in formulating further preferences. In the final phase of the decision process, the DM chooses one decision, but in the previous phases $\operatorname*{Min}(F,\succ )$ contains more than one element.

\begin{definition}
Suppose $\succ$ to be a binary relation on the linear space $Y$.\\
$d\in Y$ is said to be a \textbf{domination factor} of $y\in Y$ if $y\succ y+d$.
We define the \textbf{domination structure} of $\succ$ by $D_{\succ }:Y\to {\mathcal{P}}(Y)$ with $D_{\succ }(y):=\{d\in Y\mid y\succ y+d\}$ for each $y\in Y$.
If there exists some set $D\subseteq Y$ with $D_{\succ }(y)=D$ for all $y\in Y$, then $D$ is called the \textbf{domination set} of $\succ$.
\end{definition}

The definition implies:

\begin{proposition}
Suppose $\succ$ to be a binary relation on the linear space $Y$, $D\subseteq Y$.\\
$D$ is a domination set of $\succ$ if and only if:
$$\forall y^1,y^2\in Y:\;\; (y^1\succ y^2\iff y^2\in y^1+D).$$
There exists a domination set of $\succ$ if and only if:
\begin{equation}\label{domset-add}
\forall y^1,y^2,y\in Y:\;\; (y^1\succ y^2\implies (y^1+y)\succ (y^2+y)).
\end{equation}
If $D$ is a domination set of $\succ$, we have:
\begin{itemize}
\item[(a)] $\succ$ is reflexive $\iff 0\in D$.
\item[(b)] $\succ$ is asymmetric $\iff D\cap (-D)=\emptyset$.
\item[(c)] $\succ$ is antisymmetric $\iff D\cap (-D)=\{0\}$.
\item[(d)] $\succ$ is transitive $\iff D+D\subseteq D$.
\item[(e)] $\succ$ fulfills the condition
\begin{equation}
\forall\, y^{1},y^{2}\in Y \;\; \forall\, \lambda \in {\mathbb{R}_>}:
\;\; y^{1} \succ y^{2} \implies (\lambda y^{1})\succ (\lambda y^{2}), \label{f217-y} 
\end{equation}
if and only if $D\cup\{0\}$ is a cone.
\item[(f)] $\succ$ is a transitive relation which fulfills condition (\ref{f217-y}) if and only if $D\cup\{0\}$ is a convex cone.
\item[(g)] $\succ$ is a partial order which fulfills condition (\ref{f217-y}) if and only if $D$ is a pointed convex cone.
\end{itemize}
\end{proposition}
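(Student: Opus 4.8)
The entire proposition is driven by a single unravelling of the definition of a domination set. By definition, $D$ is the domination set of $\succ$ precisely when $D_{\succ}(y)=D$ for every $y\in Y$, i.e. when for all $y,d\in Y$ one has $y\succ y+d\iff d\in D$. Putting $y^{1}:=y$ and $y^{2}:=y+d$, so that $d=y^{2}-y^{1}$ and the map $(y^{1},y^{2})\mapsto d$ sweeps over all of $Y$ as $(y^{1},y^{2})$ runs over $Y\times Y$, this is exactly the first displayed equivalence; hence the first claim is immediate. From it, $y^{1}\succ y^{2}\iff y^{2}-y^{1}\in D$, and I would use this characterisation throughout the rest.

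For the existence statement, the forward direction is a one-line computation: if $D$ is the domination set, then $(y^{1}+y)\succ(y^{2}+y)\iff (y^{2}+y)-(y^{1}+y)\in D\iff y^{2}-y^{1}\in D\iff y^{1}\succ y^{2}$, so in particular \eqref{domset-add} holds. For the converse, I would first observe that \eqref{domset-add}, although stated as an implication, is in fact an equivalence: applying it to $y^{1}+y$, $y^{2}+y$ and the shift $-y$ gives $(y^{1}+y)\succ(y^{2}+y)\Rightarrow y^{1}\succ y^{2}$. Now set $D:=D_{\succ}(0)=\{d\in Y\mid 0\succ d\}$; for an arbitrary $y\in Y$ the equivalence just noted (with shift $-y$) yields $y\succ y+d\iff 0\succ d$, i.e. $D_{\succ}(y)=D_{\succ}(0)=D$. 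Hence $D$ is the domination set, which proves the second claim. The substantive point is just that translation invariance is precisely what lets the set computed at the origin serve at every point.

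It remains to prove (a)--(g), and here I would translate each order-theoretic axiom into a condition on $D$ via $y^{1}\succ y^{2}\iff y^{2}-y^{1}\in D$, using that the pairs $(y^{1},y^{2})$ correspond bijectively to the differences $d=y^{2}-y^{1}\in Y$ (and, for transitivity and \eqref{f217-y}, that $d_{1}:=y^{2}-y^{1}$, $d_{2}:=y^{3}-y^{2}$ with $d_{1}+d_{2}=y^{3}-y^{1}$, respectively $\lambda d=\lambda y^{2}-\lambda y^{1}$, again range over all admissible arguments). This yields at once: reflexivity $\iff 0\in D$ (part (a)); $\neg(y^{1}\succ y^{2}\wedge y^{2}\succ y^{1})$ for all $y^{1},y^{2}$ $\iff$ no $d$ has $d\in D$ and $-d\in D$ $\iff D\cap(-D)=\emptyset$ (part (b)); $(y^{1}\succ y^{2}\wedge y^{2}\succ y^{1}\Rightarrow y^{1}=y^{2})\iff(d,-d\in D\Rightarrow d=0)\iff D\cap(-D)\subseteq\{0\}$, combined with the fact that membership of $0$ is the complementary case (part (c)); and transitivity $\iff D+D\subseteq D$ (part (d)). For (e), condition \eqref{f217-y} translates to $\mathbb{R}_{>}D\subseteq D$, which is equivalent to $\mathbb{R}_{+}(D\cup\{0\})\subseteq D\cup\{0\}$, i.e. to $D\cup\{0\}$ being a cone. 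Part (f) is then the conjunction of (d) and (e): one checks that $D+D\subseteq D$ together with $D\cup\{0\}$ being a cone is equivalent to $D\cup\{0\}$ being a convex cone (a cone closed under addition is convex, and conversely). Finally, for (g) one uses that a partial order is reflexive, antisymmetric and transitive: by (a) reflexivity forces $0\in D$, hence $D=D\cup\{0\}$; then (f) makes $D$ a convex cone and (c) makes it pointed, $D\cap(-D)=\{0\}$, and the converse direction is read backwards.

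The argument is essentially routine; the only place demanding care is the bookkeeping around the zero vector in parts (e)--(g) --- keeping $D$ and $D\cup\{0\}$ distinct and making sure the passage between ``$\succ$ satisfies \eqref{f217-y} / is transitive'' and ``$D\cup\{0\}$ is a (convex) cone'' is carried out with the correct inclusions --- together with, in the existence part, the point that \eqref{domset-add} must be exploited in both directions (via the shifts $+y$ and $-y$) so that it becomes a genuine equivalence. I do not anticipate any real obstacle beyond this.
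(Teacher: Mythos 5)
Your overall route --- reduce everything to the characterization $y^1\succ y^2\iff y^2-y^1\in D$ and then translate each axiom into a condition on $D$ --- is exactly what the paper intends: the proposition is introduced with the words ``The definition implies'' and no proof is given, so your write-up is if anything more complete than the original. The first two claims and parts (a), (b), (d), (e) are handled correctly; in particular you isolate the one point of substance in the existence statement, namely that (\ref{domset-add}) self-improves to an equivalence via the shift $-y$, so that $D_{\succ}(y)=D_{\succ}(0)$ for all $y$.

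There is, however, a genuine gap precisely where you write ``one checks'' and ``membership of $0$ is the complementary case,'' i.e.\ in (c) and in the backward direction of (f), and it is not mere bookkeeping. Antisymmetry translates to $D\cap(-D)\subseteq\{0\}$, not to $D\cap(-D)=\{0\}$: for $Y=\mathbb{R}$ and $D=\mathbb{R}_{>}$ the relation $\succ$ is antisymmetric (vacuously), yet $D\cap(-D)=\emptyset\neq\{0\}$, so the equivalence in (c) needs $0\in D$ to close. Likewise, convexity of the cone $D\cup\{0\}$ only yields $D+D\subseteq D\cup\{0\}$, which is strictly weaker than the condition $D+D\subseteq D$ from (d) whenever $0\in(D+D)\setminus D$: for $Y=\mathbb{R}$ and $D=\mathbb{R}\setminus\{0\}$ the set $D\cup\{0\}=\mathbb{R}$ is a convex cone, yet $\succ$ is the relation $\neq$, which is not transitive, so the ``if'' direction of (f) fails as literally stated. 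This defect is inherited from the proposition itself (it disappears once $0\in D$ or $D\cap(-D)=\emptyset$, and in particular (g) is unaffected since reflexivity forces $0\in D$), but your proof should either flag it explicitly or repair it by adding the missing hypothesis rather than absorb it into ``one checks.''
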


\begin{example}
Not each preference relation fulfills the conditions (\ref{domset-add}) and (\ref{f217-y}). Consider $y$ to be the number of tea spoons full of sugar that a person puts into his coffee. He could prefer $2$ to $1$, but possibly not $2+2$ to $1+2$ or also not prefer $2\times 2$ to $2\times 1$.
\end{example}

We now introduce optimal elements w.r.t. sets as a tool for finding optimal elements w.r.t. relations. 

\begin{definition}
Suppose $Y$ to be a linear space and $F, D\subseteq Y$.
An element $y^{0}\in F$ is called an \textbf{efficient element} of $F$
w.r.t. $D$ if 
\[
F\cap(y^0-D)\subseteq \{y^0\}.
\]
We denote the set of efficient elements of $F$ w.r.t. $D$ by $\operatorname*{Eff}(F,D).$
\end{definition}

We get \cite[p.51]{Wei85}:

\begin{proposition}\label{p-domstruct-D}
Suppose $\succ$ to be a (not necessarily strict) preference relation on the linear space $Y$ with domination structure $D_{\succ }$, $D\subseteq Y$.
\begin{itemize}
\item[(a)] If $D_{\succ }(y)=D\mbox{ for all } y\in F$, then $\operatorname*{Min}(F,\succ)=\operatorname*{Eff}(F,D\setminus (-D))$.\\
If $\succ$ is asymmetric or antisymmetric, then $\operatorname*{Min}(F,\succ)=\operatorname*{Eff}(F,D)$. 
\item[(b)] If $D_{\succ }(y)\subseteq D\mbox{ for all } y\in F$, then $\operatorname*{Eff}(F,D)\subseteq\operatorname*{Min}(F,\succ)$.
\item[(c)] If $\succ$ is asymmetric or antisymmetric and $D\subseteq D_{\succ }(y)\mbox{ for all } y\in F$, then\\
$\operatorname*{Min}(F,\succ)\subseteq \operatorname*{Eff}(F,D)$.
\end{itemize}
\end{proposition}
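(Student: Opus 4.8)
The plan is to unwind every definition and reduce each assertion to an elementary logical manipulation; only the group structure of $Y$ is used. The one observation I would record at the outset is the translation between the relation and its domination structure: for $y\in F$ and any $y'\in Y$ we have $y'=y+(y'-y)$, so $y\succ y'\iff y'-y\in D_{\succ}(y)$. Hence on $F$ the hypothesis $D_{\succ}(y)=D$ turns ``$y\succ y'$'' into ``$y'-y\in D$'' (for $y,y'\in F$), the hypothesis $D_{\succ}(y)\subseteq D$ turns ``$y\succ y'$'' into the implication ``$y\succ y'\Rightarrow y'-y\in D$'', and $D_{\succ}(y)\supseteq D$ into ``$y'-y\in D\Rightarrow y\succ y'$''. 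I would also note once and for all that $0\notin D\setminus(-D)$ (since $0\in-D\iff 0\in D$), which is what makes the degenerate case $y=y^0$ innocuous below.

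For part (a), first claim: by the translation, $y^0\in\operatorname*{Min}(F,\succ)$ means that for every $y\in F$ the implication $y^0-y\in D\Rightarrow y-y^0\in D$ holds; since $y-y^0\in D\iff y^0-y\in-D$, the negation of this implication for a given $y$ says exactly $y^0-y\in D\setminus(-D)$. Therefore $\operatorname*{Min}(F,\succ)=\{y^0\in F\mid F\cap(y^0-(D\setminus(-D)))=\emptyset\}$, and because $0\notin D\setminus(-D)$ this equals $\{y^0\in F\mid F\cap(y^0-(D\setminus(-D)))\subseteq\{y^0\}\}=\operatorname*{Eff}(F,D\setminus(-D))$. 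For the second claim of (a) I would first prove that, \emph{if $\succ$ is asymmetric or antisymmetric}, then $\operatorname*{Min}(F,\succ)=\{y^0\in F\mid\forall y\in F\colon(y\succ y^0\Rightarrow y=y^0)\}$: for ``$\subseteq$'', given $y^0\in\operatorname*{Min}(F,\succ)$ and $y\in F$ with $y\succ y^0$, minimality forces $y^0\succ y$, which contradicts asymmetry and, under antisymmetry, yields $y=y^0$; for ``$\supseteq$'' (which needs no extra hypothesis), if $y\succ y^0$ always forces $y=y^0$, then the required conclusion $y^0\succ y$ reads $y^0\succ y^0$, which holds since it is the hypothesis $y\succ y^0$. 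Combining this description with the translation $y\succ y^0\iff y^0-y\in D$ on $F$ gives exactly $\operatorname*{Eff}(F,D)$.

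For (b), take $y^0\in\operatorname*{Eff}(F,D)$ and any $y\in F$ with $y\succ y^0$; then $y^0-y\in D_{\succ}(y)\subseteq D$, so $y\in F\cap(y^0-D)\subseteq\{y^0\}$, hence $y=y^0$ and $y^0\succ y$ holds because it coincides with $y\succ y^0$; thus $y^0\in\operatorname*{Min}(F,\succ)$. For (c), take $y^0\in\operatorname*{Min}(F,\succ)$ and any $y\in F\cap(y^0-D)$; then $y^0-y\in D\subseteq D_{\succ}(y)$, so $y\succ y^0$, whence $y^0\succ y$ by minimality, so we have both $y\succ y^0$ and $y^0\succ y$; asymmetry rules this out (so there is no such $y$) and antisymmetry forces $y=y^0$, and in either case $F\cap(y^0-D)\subseteq\{y^0\}$, i.e. $y^0\in\operatorname*{Eff}(F,D)$.

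I do not expect a genuine obstacle, since the statement is a definition chase; the step requiring the most care is the second assertion of (a), where the asymmetric and antisymmetric cases must be treated separately and the degenerate element $y=y^0$ (equivalently, whether $0\in D$) kept track of. That care is unavoidable: without the (anti)symmetry hypothesis the first assertion of (a) shows that $\operatorname*{Min}(F,\succ)$ equals $\operatorname*{Eff}(F,D\setminus(-D))$ rather than $\operatorname*{Eff}(F,D)$.
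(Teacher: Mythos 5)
Your proof is correct and follows essentially the same definition-unwinding route as the paper's: translate $y\succ y'$ for $y\in F$ into $y'-y\in D_{\succ}(y)$, observe $0\notin D\setminus(-D)$, and chase the definitions of $\operatorname*{Min}$ and $\operatorname*{Eff}$. The only divergence is in the second assertion of (a), where the paper deduces $D\cap(-D)\subseteq\{0\}$ from (anti)symmetry and then identifies $\operatorname*{Eff}(F,D\setminus(-D))$ with $\operatorname*{Eff}(F,D)$, whereas you characterize $\operatorname*{Min}(F,\succ)$ directly under (anti)symmetry; your variant is slightly more careful, since the implication ``asymmetric or antisymmetric $\Rightarrow D\cap(-D)\subseteq\{0\}$'' was established for domination sets on all of $Y$, while here $D_{\succ}(y)=D$ is assumed only on $F$.
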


\begin{proof}
\begin{itemize}
\item[]
\item[(a)] Consider some $y^0\in F$.
\begin{eqnarray*}
y^0\notin \operatorname*{Min}(F,\succ) & \Leftrightarrow & \exists y\in F:\;y\succ y^0,\mbox{ but } \neg (y^0\succ y),\\
& \Leftrightarrow & \exists y\in F:\;y^0\in y+D,\mbox{ but } y\notin y^0+D,\\
& \Leftrightarrow & \exists y\in F:\;y^0\in y+(D\setminus (-D))\\
& \Leftrightarrow & y^0\notin \operatorname*{Eff}(F,D\setminus (-D)).
\end{eqnarray*}
If $\succ$ is asymmetric or antisymmetric, then $D\cap(-D)\subseteq\{0\}$. $\Rightarrow D\setminus (-D)=D$ or $D\setminus (-D)=D\setminus\{0\}$. $\Rightarrow \operatorname*{Eff}(F,D\setminus (-D))=\operatorname*{Eff}(F,D)$.
\item[(b)] Consider some $y^0\in F\setminus \operatorname*{Min}(F,\succ)$.
$\Rightarrow \exists y\in F:\;y\succ y^0,\mbox{ but } \neg (y^0\succ y)$.
$\Rightarrow \exists y\in F\setminus\{y^0\}:\; y^0\in y+D_{\succ }(y)$.
$\Rightarrow \exists y\in F\setminus\{y^0\}:\; y\in y^0-D_{\succ }(y)\subseteq y^0-D$. 
$\Rightarrow y^0\notin \operatorname*{Eff}(F,D)$.
\item[(c)] Consider some $y^0\in F\setminus \operatorname*{Eff}(F,D)$.
$\Rightarrow \exists y\in F\setminus\{y^0\}:\; y\in y^0-D\subseteq y^0-D_{\succ }(y)$. 
$\Rightarrow \exists y\in F\setminus\{y^0\}:\; y^0\in y+D_{\succ }(y)$.
$\Rightarrow \exists y\in F\setminus\{y^0\}:\;y\succ y^0$.
$\Rightarrow y^0\notin \operatorname*{Min}(F,\succ)$ since $\succ$ is asymmetric or antisymmetric.
\end{itemize}
\end{proof}

\begin{remark}
In \cite{Wei83} and \cite{Wei85}, optimal elements of sets w.r.t. relations and sets were investigated under the general assumptions given here. There exist earlier papers which study optima w.r.t. quasi orders, e.g. \cite{Wall45} and \cite{Ward54}, or optimal elements w.r.t. ordering cones, e.g. \cite{hurw58} and \cite{yu73dom}. The concept of domination structures goes back to Yu \cite{yu73dom}. Domination factors according to the above definition were introduced in \cite{Berg76},
where minimal elements w.r.t. convex sets $D$ with $0\in D\setminus\operatorname*{int}D$ were investigated in $\mathbb{R}^\ell$.   
\end{remark}

The domination factors refer to elements which are dominated. Of course, a structure could also be built by dominating elements. Such a structure was studied by Chen \cite{Chen92} and later in the books by himself et al. \cite{CheHuaYan05} for the case that the structure consists of convex cones or of convex sets which contain zero in their boundary.

\begin{definition}
Suppose $\succ$ to be a binary relation on the linear space $Y$.\\
$\tilde{d}\in Y$ is said to be a \textbf{pre-domination factor} of $y\in Y$ if $\;y-\tilde{d}\succ y$.
We define the \textbf{pre-domination structure} of $\succ$ by $\tilde{D}_{\succ } : Y\to {\mathcal{P}}(Y)$ with $\tilde{D}_{\succ }(y):=\{\tilde{d}\in Y\mid y-\tilde{d}\succ y\}$ for each $y\in Y$.
\end{definition}

A pre-domination structure is constant on the entire space if and only if the domination structure of the same relation is constant on the whole space.

\begin{proposition}
Suppose $\succ$ to be a (not necessarily strict) preference relation on the linear space $Y$ with pre-domination structure $\tilde{D}_{\succ }$.
There exists a domination set $D\subseteq Y$ of $\succ$ if and only if $\tilde{D}_{\succ }(y)=D\mbox{ for all } y\in Y$.
\end{proposition} 

\begin{proof}
Let $D_{\succ }$ denote the domination structure of $\succ$.\\
$D_{\succ }(y)=D$ holds for all $y\in Y$ if and only if:\\
$\forall y\in Y:\;(y\succ y+d\Leftrightarrow d\in D)$, i.e., if and only if\\
$\forall y\in Y:\;(y-d\succ y\Leftrightarrow d\in D)$, which is equivalent to $\tilde{D}_{\succ }(y)=D\mbox{ for all } y\in Y$.
\end{proof}

The pre-domination structure may consist of convex sets when this is not the case for the domination structure.

\begin{example}\label{ex-predom}
Define on $Y=\mathbb{R}^2$ the relation $\succ$ by: $y^1\succ y^2\Leftrightarrow \| y^1\| _2\leq \| y^2\| _2$.
The pre-domination structure, but not the domination structure,  consists of convex sets. 
\end{example}

Analogously to Proposition \ref{p-domstruct-D}, the following relationships between pre-domination structures and minima w.r.t. sets hold.

\begin{proposition}
Suppose $\succ$ to be a (not necessarily strict) preference relation on the linear space $Y$ with pre-domination structure $\tilde{D}_{\succ }$, $D\subseteq Y$.
\begin{itemize}
\item[(a)] If $\tilde{D}_{\succ }(y)=D\mbox{ for all } y\in F$, then $\operatorname*{Min}(F,\succ)=\operatorname*{Eff}(F,D\setminus (-D))$.\\
If $\succ$ is asymmetric or antisymmetric, then $\operatorname*{Min}(F,\succ)=\operatorname*{Eff}(F,D)$. 
\item[(b)] If $\tilde{D}_{\succ }(y)\subseteq D\mbox{ for all } y\in F$, then $\operatorname*{Eff}(F,D)\subseteq\operatorname*{Min}(F,\succ)$.
\item[(c)] If $\succ$ is asymmetric or antisymmetric and $D\subseteq \tilde{D}_{\succ }(y)\mbox{ for all } y\in F$, then\\
$\operatorname*{Min}(F,\succ)\subseteq \operatorname*{Eff}(F,D)$.
\end{itemize}
\end{proposition}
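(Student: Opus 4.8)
The plan is to mirror, almost verbatim, the argument given for Proposition \ref{p-domstruct-D}, exploiting the fact established just above in the excerpt: for any relation $\succ$, the domination structure $D_\succ$ is constant on all of $Y$ (equal to $D$) if and only if the pre-domination structure $\tilde D_\succ$ is constant on all of $Y$ (equal to $D$). However, since the hypotheses here only concern the behaviour of $\tilde D_\succ$ on $F$ rather than on all of $Y$, I would not invoke that global equivalence directly but instead re-derive the pointwise translation $y-\tilde d\succ y\iff \tilde d\in\tilde D_\succ(y)$ and feed it into the chain of equivalences.

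First I would prove (a). Fix $y^0\in F$ and write out the negation of membership in $\operatorname*{Min}(F,\succ)$: there is $y\in F$ with $y\succ y^0$ but $\neg(y^0\succ y)$. The relation $y\succ y^0$ says $y^0\in y+\tilde D_\succ(y)$ (by definition of pre-domination factor, reading $y^0 = y-(-(y^0-y))$... more cleanly: $y\succ y^0$ means $y^0-y$... ) — here I must be a little careful with signs: $\tilde d\in\tilde D_\succ(y)$ means $y-\tilde d\succ y$, so $y\succ y^0$ is equivalent to $y-\tilde d\succ y$ with $\tilde d := y-y^0$, i.e. $y-y^0\in\tilde D_\succ(y)$, i.e. $y^0\in y-\tilde D_\succ(y)$. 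Using the hypothesis $\tilde D_\succ(y)=D$ for all $y\in F$, this becomes $y^0\in y-D$. Similarly $\neg(y^0\succ y)$ becomes $y\notin y^0-D$. Combining, $y^0\notin\operatorname*{Min}(F,\succ)$ iff there is $y\in F$ with $y^0\in y+(D\setminus(-D))$... wait, I need to match the structure of the earlier proof: there, from $y^0\in y+D$ and $y\notin y^0+D$ one gets $y^0\in y+(D\setminus(-D))$. In the present notation the analogue is: from $y^0\in y-D$ (so $y\in y^0+D$... hmm). Let me instead keep the earlier proof's bookkeeping: $y\succ y^0 \Leftrightarrow y^0\in y-D$; $\neg(y^0\succ y)\Leftrightarrow y\notin y^0 - D \Leftrightarrow y^0\notin y+D$. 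Then the two conditions together give $y^0\in (y-D)\setminus(y+D) \ni y^0$ forces $y^0\in y+((-D)\setminus D) = y-(D\setminus(-D))$, hence $y^0\notin\operatorname*{Eff}(F,D\setminus(-D))$, and conversely. So $\operatorname*{Min}(F,\succ)=\operatorname*{Eff}(F,D\setminus(-D))$. The asymmetry/antisymmetry addendum is then identical to the earlier proof: $D\cap(-D)\subseteq\{0\}$ forces $D\setminus(-D)\in\{D,\,D\setminus\{0\}\}$, and $\operatorname*{Eff}(F,D)=\operatorname*{Eff}(F,D\setminus\{0\})$ since the efficiency condition $F\cap(y^0-D)\subseteq\{y^0\}$ is insensitive to whether $0\in D$.

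For (b), take $y^0\in F\setminus\operatorname*{Min}(F,\succ)$; then there is $y\in F$ with $y\succ y^0$ and $\neg(y^0\succ y)$, the latter forcing $y\neq y^0$ (otherwise $y\succ y^0$ and $y^0\succ y$ both hold). Translate $y\succ y^0$ to $y^0\in y-\tilde D_\succ(y)$, equivalently $y\in y^0-\tilde D_\succ(y)\subseteq y^0-D$ by the hypothesis $\tilde D_\succ(y)\subseteq D$; hence $F\cap(y^0-D)\ni y\neq y^0$, so $y^0\notin\operatorname*{Eff}(F,D)$. For (c), take $y^0\in F\setminus\operatorname*{Eff}(F,D)$; then there is $y\in F\setminus\{y^0\}$ with $y\in y^0-D\subseteq y^0-\tilde D_\succ(y)$, which gives $y-\tilde d\succ y$ for $\tilde d = y^0-y$, i.e. $y^0\succ y$; since $\succ$ is asymmetric or antisymmetric and $y\neq y^0$, we get $\neg(y\succ y^0)$, so $y^0\notin\operatorname*{Min}(F,\succ)$. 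I expect the only real obstacle to be getting the signs right in the single identity $\tilde d\in\tilde D_\succ(y)\iff y-\tilde d\succ y$ and propagating it consistently — once that translation is pinned down, every line of the proof of Proposition \ref{p-domstruct-D} transfers with $D_\succ$ replaced by $\tilde D_\succ$ and the corresponding shift of a sign, and no new idea is needed.
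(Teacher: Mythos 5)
Your overall strategy (rerun the proof of Proposition \ref{p-domstruct-D} with the pre-domination translation) is exactly the paper's, but the single identity everything hinges on is stated with the wrong orientation in your write-up, and the error does not wash out. From $\tilde{d}\in\tilde{D}_{\succ}(v)\iff v-\tilde{d}\succ v$ one gets, for arbitrary $u,v\in Y$, that $u\succ v\iff v-u\in\tilde{D}_{\succ}(v)\iff u\in v-\tilde{D}_{\succ}(v)$: the pre-domination structure must be evaluated at the \emph{dominated} (right-hand) element. You instead translate $y\succ y^0$ as $y^0\in y-\tilde{D}_{\succ}(y)$, taking $\tilde{d}=y-y^0$ and base point $y$; but with $\tilde{d}=y-y^0$ the expression $y-\tilde{d}$ equals $y^0$, so ``$y-\tilde{d}\succ y$'' reads $y^0\succ y$, not $y\succ y^0$. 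Your formula is precisely the translation of the reversed statement.

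The consequences are not cosmetic. In (a) your chain ends with $y^0\in y-(D\setminus(-D))$ for some $y\in F\setminus\{y^0\}$, which exhibits $y^0$ as a witness that \emph{$y$} fails to be efficient; it does not show $y^0\notin\operatorname*{Eff}(F,D\setminus(-D))$, which would require $y\in y^0-(D\setminus(-D))$. The correct chain is $y\succ y^0\iff y\in y^0-D$ and $\neg(y^0\succ y)\iff y^0\notin y-D$, giving $y^0-y\in D\setminus(-D)$, i.e.\ $y\in y^0-(D\setminus(-D))$. In (c) the same flip (compounded by the slip $y-(y^0-y)=y^0$, which is false) makes you derive $y^0\succ y$ and $\neg(y\succ y^0)$; that shows $y\notin\operatorname*{Min}(F,\succ)$, not $y^0\notin\operatorname*{Min}(F,\succ)$. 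The correct route uses $y\in y^0-D\subseteq y^0-\tilde{D}_{\succ}(y^0)$ to get $y\succ y^0$ directly. In (b) your two sign slips happen to cancel into using $\tilde{D}_{\succ}(y)$ where $\tilde{D}_{\succ}(y^0)$ belongs, and since the hypothesis bounds $\tilde{D}_{\succ}$ uniformly on $F$ the conclusion survives; still, the intermediate step ``$y^0\in y-\tilde{D}_{\succ}(y)$, equivalently $y\in y^0-\tilde{D}_{\succ}(y)$'' is false as an equivalence ($y^0\in y-S$ is equivalent to $y\in y^0+S$, not $y\in y^0-S$). Once you fix the one identity $y^1\succ y^2\iff y^1\in y^2-\tilde{D}_{\succ}(y^2)$, the rest of your plan goes through verbatim and coincides with the paper's proof.
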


\begin{proof}
\begin{itemize}
\item[]
\item[(a)] Consider some $y^0\in F$.
\begin{eqnarray*}
y^0\notin \operatorname*{Min}(F,\succ) & \Leftrightarrow & \exists y\in F:\;y\succ y^0,\mbox{ but } \neg (y^0\succ y),\\
& \Leftrightarrow & \exists y\in F:\;y\in y^0-D,\mbox{ but } y^0\notin y-D,\\
& \Leftrightarrow & \exists y\in F:\;y^0\in y+(D\setminus (-D))\\
& \Leftrightarrow & y^0\notin \operatorname*{Eff}(F,D\setminus (-D)).
\end{eqnarray*}
If $\succ$ is asymmetric or antisymmetric, then $D\cap(-D)\subseteq\{0\}$. $\Rightarrow D\setminus (-D)=D\setminus\{0\}$. $\Rightarrow \operatorname*{Eff}(F,D\setminus (-D))=\operatorname*{Eff}(F,D)$.
\item[(b)] Consider some $y^0\in F\setminus \operatorname*{Min}(F,\succ)$.
$\Rightarrow \exists y\in F:\;y\succ y^0,\mbox{ but } \neg (y^0\succ y)$.
$\Rightarrow \exists y\in F\setminus\{y^0\}:\; y\in y^0-\tilde{D}_{\succ }(y^0)\subseteq y^0-D$.
$\Rightarrow y^0\notin \operatorname*{Eff}(F,D)$.
\item[(c)] Consider some $y^0\in F\setminus \operatorname*{Eff}(F,D)$.
$\Rightarrow \exists y\in F\setminus\{y^0\}:\; y\in y^0-D\subseteq y^0-\tilde{D}_{\succ }(y^0)$. 
$\Rightarrow \exists y\in F\setminus\{y^0\}:\;y\succ y^0$.
$\Rightarrow y^0\notin \operatorname*{Min}(F,\succ)$ since $\succ$ is asymmetric or antisymmetric.
\end{itemize}
\end{proof}

Since the domination structure as well as the pre-domination structure completely characterize the binary relation, the minimal point set w.r.t. the relation can be described via the domination structure or via the pre-domination structure. 

\begin{lemma}\label{l-minrel-mindom}
Suppose $\succ$ to be a binary relation on the linear space $Y$ with domination structure $D_{\succ }$ and pre-domination structure $\tilde{D}_{\succ }$, $F\subseteq Y$.
\begin{eqnarray*}
\operatorname*{Min}(F,\succ) &= &\{y^0\in F\mid \forall y\in F:\;(y^0\in y+D_{\succ }(y)\Rightarrow y\in y^0+D_{\succ }(y^0))\}\\
& = & \{y^0\in F\mid \forall y\in F:\;(y\in y^0-\tilde{D}_{\succ }(y^0)\Rightarrow y^0\in y-\tilde{D}_{\succ }(y))\}\\
& = & \{y^0\in F\mid \forall y\in F:\;(y^0\in y+\tilde{D}_{\succ }(y^0)\Rightarrow y\in y^0+\tilde{D}_{\succ }(y))\}
\end{eqnarray*}
If $\succ$ is asymmetric or antisymmetric, we get
\begin{eqnarray*}
\operatorname*{Min}(F,\succ) &= &\{y^0\in F\mid \not\exists y\in F\setminus\{y^0\}:\;y^0\in y+D_{\succ }(y)\}\\
& = & \{y^0\in F\mid \not\exists y\in F\setminus\{y^0\}:\;y\in y^0-\tilde{D}_{\succ }(y^0)\}\\
& = & \{y^0\in F\mid \not\exists y\in F\setminus\{y^0\}:\;y^0\in y+\tilde{D}_{\succ }(y^0)\}
\end{eqnarray*}
\end{lemma}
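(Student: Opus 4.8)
The plan is to unwind the definitions directly. Recall that $\operatorname*{Min}(F,\succ)=\{y^0\in F\mid \forall y\in F:(y\succ y^0\Rightarrow y^0\succ y)\}$, and that by definition of the domination structure, $y\succ y^0$ holds exactly when $y^0-y\in D_{\succ}(y)$, i.e. $y^0\in y+D_{\succ}(y)$. Substituting this equivalence in both the hypothesis $y\succ y^0$ and the conclusion $y^0\succ y$ of the implication defining $\operatorname*{Min}(F,\succ)$ yields the first claimed characterization immediately, with no further work.

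For the second characterization, I would use the definition of the pre-domination structure: $y-\tilde d\succ y$ iff $\tilde d\in\tilde D_{\succ}(y)$, equivalently $y'\succ y$ iff $y-y'\in\tilde D_{\succ}(y)$, i.e. $y'\in y-\tilde D_{\succ}(y)$. Applying this with the roles $(y',y)=(y,y^0)$ rewrites $y\succ y^0$ as $y\in y^0-\tilde D_{\succ}(y^0)$, and with $(y',y)=(y^0,y)$ rewrites $y^0\succ y$ as $y^0\in y-\tilde D_{\succ}(y)$; this gives the second line. The third line is then just the second line rewritten by moving terms across the "$\in$": $y\in y^0-\tilde D_{\succ}(y^0)$ is the same statement as $y^0-y\in\tilde D_{\succ}(y^0)$, i.e. $y^0\in y+\tilde D_{\succ}(y^0)$, and similarly $y^0\in y-\tilde D_{\succ}(y)$ is the same as $y\in y^0+\tilde D_{\succ}(y)$. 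So all three descriptions of $\operatorname*{Min}(F,\succ)$ are literally the same set of conditions written in three cosmetic variants.

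For the asymmetric/antisymmetric case, the point is that when $\succ$ is asymmetric or antisymmetric, one cannot have both $y\succ y^0$ and $y^0\succ y$ for $y\neq y^0$ (asymmetry forbids it outright; antisymmetry would force $y=y^0$). Hence for $y^0\in F$ the implication "$y\succ y^0\Rightarrow y^0\succ y$" holds for a given $y\in F\setminus\{y^0\}$ if and only if $y\succ y^0$ fails; and for $y=y^0$ the implication is automatic (reflexivity is not needed — if $y^0\succ y^0$ fails the implication is vacuous, and if it holds the conclusion coincides with the hypothesis). Therefore $y^0\in\operatorname*{Min}(F,\succ)$ iff there is no $y\in F\setminus\{y^0\}$ with $y\succ y^0$, and translating $y\succ y^0$ through the domination structure (resp. pre-domination structure, in its two cosmetic forms) gives the three displayed equalities.

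I do not anticipate a genuine obstacle here: every step is an application of a definition already recorded in the excerpt (the definitions of $\operatorname*{Min}(F,\succ)$, $D_{\succ}$, $\tilde D_{\succ}$, and of asymmetry/antisymmetry). The only point requiring a moment's care is making sure the case $y=y^0$ is handled correctly when passing to the $\not\exists y\in F\setminus\{y^0\}$ form — i.e. checking that the implication defining membership in $\operatorname*{Min}$ is trivially satisfied at $y=y^0$ regardless of whether $\succ$ is reflexive — so that restricting the quantifier to $F\setminus\{y^0\}$ loses nothing.
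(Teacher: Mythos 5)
Your proof is correct, and it is exactly the intended argument: the paper states Lemma \ref{l-minrel-mindom} without proof, treating it as an immediate consequence of the definitions of $\operatorname*{Min}(F,\succ)$, $D_{\succ}$ and $\tilde D_{\succ}$, which is precisely the definition-unwinding you carry out. Your explicit attention to the $y=y^0$ case in the asymmetric/antisymmetric part (where asymmetry forces irreflexivity but the implication is a tautology anyway) is the one point worth writing down, and you handle it correctly.
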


In the case that the domination structure can be described by a domination set, the decision problem becomes a vector optimization problem, which we will study in the next sections.
\smallskip

\section{Basic Properties of Efficient and Weakly Efficient Elements in Vector Optimization}\label{s-basics-vo}

In this section, we will define the vector optimization problem and prove some basic properties of its solutions, including existence results. We will show in which way minimal solutions of scalar-valued functions can deliver solutions to the vector optimization problem.

The \textbf{vector optimization problem} is given by a function $f:S\rightarrow Y$, mapping a nonempty set $S$ into a linear space $Y$, and a set $D\subset Y$ which defines the solution concept. A solution of the vector optimization problem is each $s\in S$ with $f(s)\in \operatorname*{Eff}(f(S),D)$. 

Hence, we are interested in the efficient elements of $F:=f(S)$ w.r.t. $D$. One can imagine that for each $y^0\in F$ the set of elements in $F$ which is preferred to $y^0$ is just $F\cap (y^0-(D\setminus\{0\}))$.
We will call $D$ the \textbf{domination set} of the vector optimization problem.

\begin{remark}
Weidner (\cite{Wei83}, \cite{Wei85}, \cite{Wei90}) studied vector optimization problems under such general assumptions motivated by decision theory. Here, we only refer to a part of those results. If $D$ is an ordering cone in $Y$, $\operatorname*{Eff}(F,D)$ is the set of elements of $F$ which are minimal w.r.t. the cone order $\leq_D$. In the literature, vector optimization problems are usually defined with domination sets which are ordering cones.
\end{remark}

If $Y$ is a topological vector space, it turns out that, in general, it is easier to determine solutions to vector optimization problems w.r.t. open domination sets.
\begin{definition}
Suppose $Y$ to be a topological vector space and $F, D\subseteq Y$.
We define $\operatorname*{WEff}(F,D):=\operatorname*{Eff}(F,\operatorname*{int}D)$ as the set of 
\textbf{weakly efficient elements} of $F$ w.r.t. $D$.
\end{definition}

We will first show some basic properties of efficient and weakly efficient point sets. These statements include relationships between the efficient point set of $F$ and the efficient point set of $F+D$. In applications, $F+D$ may be closed or convex though $F$ does not have this property. 

The definitions imply (\cite{Wei83},\cite{Wei85}):

\begin{lemma}\label{l-Eff-prop}
Assume that $Y$ is a linear space, $F, D\subseteq Y$.
Then we have:
\begin{itemize}
\item[(a)] $\operatorname*{Eff}(F,D)=\operatorname*{Eff}(F,D\cup\{0\})=\operatorname*{Eff}(F,D\setminus\{0\})$.
\item[(b)] $D_1\subseteq D\implies \operatorname*{Eff}(F,D)\subseteq \operatorname*{Eff}(F,D_1)$.
\item[(c)] $F_1\subseteq F\implies \operatorname*{Eff}(F,D)\cap F_1\subseteq \operatorname*{Eff}(F_1,D)$.
\item[(d)] Suppose $F\subseteq A\subseteq F+(D\cup\{0\})$.
Then $\operatorname*{Eff}(A,D)\subseteq \operatorname*{Eff}(F,D)$.\\
If, additionally, $D\cap (-D)\subseteq \{0\}$ and $D+D\subseteq D$, then\\
$\operatorname*{Eff}(A,D)=\operatorname*{Eff}(F,D)$.
\item[(e)] If $D+D\subseteq D$, then $\operatorname*{Eff}(F\cap (y-D),D)=\operatorname*{Eff}(F,D)\cap (y-D)\mbox{ for all } y\in Y$.
\end{itemize}
\end{lemma}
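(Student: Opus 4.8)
The plan is to verify each of the five items directly from the definition $\operatorname*{Eff}(F,D)=\{y^0\in F\mid F\cap(y^0-D)\subseteq\{y^0\}\}$, since all of them are essentially set-theoretic manipulations with this single formula. I expect no deep obstacle; the only point requiring genuine care is the equality case in (d), where transitivity and pointedness of $D$ must be used.

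For (a), I would observe that for $y^0\in F$ the condition $F\cap(y^0-D)\subseteq\{y^0\}$ is insensitive to whether $0\in D$: adding $0$ to $D$ only possibly adds $y^0$ itself to $y^0-D$, which is already allowed on the right-hand side, and removing $0$ from $D$ removes $y^0$ from $y^0-D$, which does not change the inclusion. Item (b) is immediate: if $D_1\subseteq D$ then $y^0-D_1\subseteq y^0-D$, so $F\cap(y^0-D)\subseteq\{y^0\}$ forces $F\cap(y^0-D_1)\subseteq\{y^0\}$. For (c), take $y^0\in\operatorname*{Eff}(F,D)\cap F_1$; then $F_1\cap(y^0-D)\subseteq F\cap(y^0-D)\subseteq\{y^0\}$, so $y^0\in\operatorname*{Eff}(F_1,D)$.

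For (d), the inclusion $\operatorname*{Eff}(A,D)\subseteq\operatorname*{Eff}(F,D)$ follows from (c) once we know $\operatorname*{Eff}(A,D)\subseteq F$: if $y^0\in A\subseteq F+(D\cup\{0\})$, write $y^0=y+d$ with $y\in F$, $d\in D\cup\{0\}$; then $y=y^0-d\in A\cap(y^0-D)\subseteq\{y^0\}$, hence $y=y^0$ and $y^0\in F$; then apply (c) with $F_1$ replaced by $F$ (noting $F\subseteq A$). For the equality under the extra hypotheses $D\cap(-D)\subseteq\{0\}$ and $D+D\subseteq D$, I would show $\operatorname*{Eff}(F,D)\subseteq\operatorname*{Eff}(A,D)$: let $y^0\in\operatorname*{Eff}(F,D)$ and suppose $a\in A\cap(y^0-D)$ with $a\ne y^0$. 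Using $A\subseteq F+(D\cup\{0\})$ write $a=y+d'$ with $y\in F$, $d'\in D\cup\{0\}$, and using $a\in y^0-D$ write $a=y^0-d$ with $d\in D$. Then $y=y^0-d-d'$. If $d'=0$ this gives $y\in F\cap(y^0-D)$, so $y=y^0$, whence $d=0$, contradicting $a\ne y^0$ (here $d\in D$ and $d=y^0-a$, and $a=y=y^0$ forces $a=y^0$). If $d'\ne0$, then $d+d'\in D+D\subseteq D$ if $d\ne 0$, or $d+d'=d'\in D$ if $d=0$; either way $y=y^0-(d+d')\in F\cap(y^0-D)$, so $y=y^0$, i.e. $d+d'=0$, so $d'=-d\in D\cap(-D)\subseteq\{0\}$, contradicting $d'\ne0$. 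Hence $A\cap(y^0-D)\subseteq\{y^0\}$; combined with $y^0\in F\subseteq A$ this yields $y^0\in\operatorname*{Eff}(A,D)$.

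For (e), set $A:=F\cap(y-D)$. Note $A\subseteq F$, so by (c) applied with $F_1=A$ we get $\operatorname*{Eff}(F,D)\cap A\subseteq\operatorname*{Eff}(A,D)$, which is one inclusion after rewriting $\operatorname*{Eff}(F,D)\cap(y-D)$ as $\operatorname*{Eff}(F,D)\cap A$ (valid because $\operatorname*{Eff}(F,D)\subseteq F$). For the reverse, take $y^0\in\operatorname*{Eff}(A,D)$, so $y^0\in F$, $y^0\in y-D$, and $A\cap(y^0-D)\subseteq\{y^0\}$; I claim $F\cap(y^0-D)\subseteq A$, which then gives $F\cap(y^0-D)=A\cap(y^0-D)\subseteq\{y^0\}$ and hence $y^0\in\operatorname*{Eff}(F,D)\cap(y-D)$. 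Indeed, if $z\in F\cap(y^0-D)$, write $z=y^0-d$ with $d\in D$ and $y^0=y-d'$ with $d'\in D$; then $z=y-(d+d')$ and $d+d'\in D+D\subseteq D$, so $z\in y-D$, hence $z\in F\cap(y-D)=A$, as claimed.
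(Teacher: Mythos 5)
Your proof is correct and follows essentially the same route as the paper: each item is checked directly from the definition $F\cap(y^0-D)\subseteq\{y^0\}$, with (d) using the decomposition $a=y+d'$, $a=y^0-d$ together with $D+D\subseteq D$ and $D\cap(-D)\subseteq\{0\}$, and (e) using $y^0-D\subseteq(y-D)-D\subseteq y-D$, exactly as in the paper. The only cosmetic imprecision is in the first part of (d), where for $d=0$ the element $y=y^0-d$ need not lie in $y^0-D$ when $0\notin D$; but then $y=y^0$ trivially, so the conclusion is unaffected.
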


\begin{proof}
\begin{itemize}
\item[]
\item[(a)]-(c) follow immediately from the definition of efficient elements.
\item[(d)] Consider some $y^0\in \operatorname*{Eff}(A,D)$.
Since $A\subseteq F+(D\cup\{0\})$, there exist $y^1\in F$, $d\in D\cup\{0\}$ with $y^0=y^1+d$.
$\Rightarrow y^1\in y^0-(D\cup\{0\})$. $\Rightarrow y^1=y^0$ because of $y^0\in \operatorname*{Eff}(A,D)$ and $F\subseteq A$.
$\Rightarrow y^0\in F$. Hence, $\operatorname*{Eff}(A,D)\subseteq F$.
$\operatorname*{Eff}(A,D)\subseteq \operatorname*{Eff}(F,D)$ follows from (c) since $F\subseteq A$.\\
Assume now $D\cap (-D)\subseteq \{0\}$ and $D+D\subseteq D$. Suppose that $\operatorname*{Eff}(A,D)\not=\operatorname*{Eff}(F,D)$. $\Rightarrow \exists y^0\in\operatorname*{Eff}(F,D)\setminus\operatorname*{Eff}(A,D)$.
$\Rightarrow \exists a\in A:\;a\in y^0-(D\setminus\{0\})$.
$ A\subseteq F+(D\cup\{0\}) \Rightarrow \exists y\in F, d\in D\cup\{0\} :\;y+d\in y^0-(D\setminus\{0\})$.
$\Rightarrow y^0-y\in d+(D\setminus\{0\})\subseteq D$. $\Rightarrow y^0=y$ since $y^0\in\operatorname*{Eff}(F,D)$.
Then $y+d\in y^0-(D\setminus\{0\})$ implies $d\in -(D\setminus\{0\})$, a contradiction to $D\cap (-D)\subseteq \{0\}$. 
\item[(e)] Choose some $y\in Y$.
$\operatorname*{Eff}(F\cap (y-D),D)\supseteq\operatorname*{Eff}(F,D)\cap (y-D)$ results from (c).
Assume there exists some $y^0\in\operatorname*{Eff}(F\cap (y-D),D)\setminus\operatorname*{Eff}(F,D)$.
$\Rightarrow (y^0-D)\cap (F\cap (y-D))\subseteq \{y^0\}$ and $\exists y^1\not=y^0:\; y^1\in F\cap(y^0-D)$.
$y^1\in y^0-D\subseteq (y-D)-D\subseteq y-D$. $\Rightarrow y^1\in (y^0-D)\cap (F\cap (y-D))$, a contradiction.
\end{itemize}
\end{proof}

\begin{remark}
$\operatorname*{Eff}(F+D,D)\subseteq \operatorname*{Eff}(F,D)$ was proved in $Y=\mathbb{R}^\ell$ by Bergstresser et al. \cite[Lemma 2.2]{Berg76} for convex sets $D$ with $0\in D\setminus \operatorname*{int}D$, by Sawaragi, Nakayama and Tanino 
\cite[Prop. 3.1.2]{sana85} for cones $D$. Vogel \cite[Satz 6]{Vog75} showed
$\operatorname*{Eff}(F+D,D)= \operatorname*{Eff}(F,D)$ for pointed convex cones $D$.
Yu \cite[p.20]{yu74} gave an example for a convex cone which is not pointed and for which $\operatorname*{Eff}(F,D)$ is not a subset of $\operatorname*{Eff}(F+D,D)$.
\end{remark}

We get for weakly efficient elements \cite{Wei85}:

\begin{lemma}\label{l-wEff-prop}
Assume that $Y$ is a topological vector space, $F, D\subseteq Y$.
Then:
\begin{itemize}
\item[(a)] $\operatorname*{Eff}(F,D)\subseteq \operatorname*{WEff}(F,D)$.
\item[(b)] $\operatorname*{int}D_1\subseteq \operatorname*{int}D\implies \operatorname*{WEff}(F,D)\subseteq \operatorname*{WEff}(F,D_1)$.
\item[(c)] $F_1\subseteq F\implies \operatorname*{WEff}(F,D)\cap F_1\subseteq \operatorname*{WEff}(F_1,D)$.
\item[(d)] Suppose $F\subseteq A\subseteq F\cup (F+\operatorname*{int}D)$.
Then $\operatorname*{WEff}(A,D)\subseteq \operatorname*{WEff}(F,D)$.\\
If, additionally, $\operatorname*{int}D\cap (-\operatorname*{int}D)\subseteq \{0\}$ and $\operatorname*{int}D+\operatorname*{int}D\subseteq D$, then\\
$\operatorname*{WEff}(A,D)=\operatorname*{WEff}(F,D)$.
\item[(e)] Suppose $F\subseteq A\subseteq F+(D\cup\{0\})$ and $D+\operatorname*{int}D\subseteq D\setminus\{0\}$.\\
Then $\operatorname*{WEff}(A,D)\cap F=\operatorname*{WEff}(F,D)$.
\item[(f)] If $D+\operatorname*{int}D\subseteq D$, then\\
$\operatorname*{WEff}(F\cap (y-D),D)=\operatorname*{WEff}(F,D)\cap (y-D)\mbox{ for all } y\in Y$.
\item[(g)] $\operatorname*{WEff}(F,D)=F$ if $\operatorname*{int}D=\emptyset $.
\item[(h)] $\operatorname*{WEff}(F,D)=\operatorname*{WEff}(F,\operatorname*{cl}D)$ if $D$ is convex and $\operatorname*{int}D\not=\emptyset $.
\end{itemize}
\end{lemma}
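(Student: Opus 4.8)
The plan is to derive (a)--(d) and (g) by reducing to Lemma~\ref{l-Eff-prop} with $\operatorname*{int}D$ substituted for $D$, using the definition $\operatorname*{WEff}(F,D)=\operatorname*{Eff}(F,\operatorname*{int}D)$, and to treat (e), (f), (h) separately. Parts (a), (b), (c) are then immediate: (a) is Lemma~\ref{l-Eff-prop}(b) applied to $\operatorname*{int}D\subseteq D$, (b) is Lemma~\ref{l-Eff-prop}(b) applied to $\operatorname*{int}D_1\subseteq\operatorname*{int}D$, and (c) is Lemma~\ref{l-Eff-prop}(c) with domination set $\operatorname*{int}D$. Part (g) is read off the definition of efficiency: if $\operatorname*{int}D=\emptyset$, then $y^0-\operatorname*{int}D=\emptyset\subseteq\{y^0\}$ for every $y^0\in F$, so $\operatorname*{Eff}(F,\emptyset)=F$.

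For (d) I would first record the set identity $F\cup(F+\operatorname*{int}D)=F+(\operatorname*{int}D\cup\{0\})$, so that the hypothesis $F\subseteq A\subseteq F\cup(F+\operatorname*{int}D)$ is precisely that of Lemma~\ref{l-Eff-prop}(d) with $D$ replaced by $\operatorname*{int}D$; this yields $\operatorname*{WEff}(A,D)\subseteq\operatorname*{WEff}(F,D)$. Under the additional assumptions, the one extra thing to verify before invoking the second half of Lemma~\ref{l-Eff-prop}(d) is $\operatorname*{int}D+\operatorname*{int}D\subseteq\operatorname*{int}D$; this holds because translates of the open set $\operatorname*{int}D$ are open, so $\operatorname*{int}D+\operatorname*{int}D$ is an open set contained in $D$ and hence in $\operatorname*{int}D$.

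Parts (e) and (f) do not reduce directly to Lemma~\ref{l-Eff-prop}, since the localizing set there is $D$ rather than $\operatorname*{int}D$, so I would argue them by hand. In (e), the inclusion $\operatorname*{WEff}(A,D)\cap F\subseteq\operatorname*{WEff}(F,D)$ is Lemma~\ref{l-Eff-prop}(c) for $\operatorname*{int}D$ and $F\subseteq A$. For the converse, let $y^0\in\operatorname*{Eff}(F,\operatorname*{int}D)$ and $a\in A\cap(y^0-\operatorname*{int}D)$; writing $a=y+d$ with $y\in F$ and $d\in D\cup\{0\}$, one obtains $y^0-y\in d+\operatorname*{int}D$. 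If $d=0$, then $y\in F\cap(y^0-\operatorname*{int}D)$, forcing $y=y^0$; if $d\in D$, then $d+\operatorname*{int}D$ is an open set contained in $D+\operatorname*{int}D\subseteq D\setminus\{0\}$, hence contained in $\operatorname*{int}D$ and not containing $0$, so $y\in F\cap(y^0-\operatorname*{int}D)$ with $y\neq y^0$ is again impossible. Thus $a=y^0$, i.e.\ $y^0\in\operatorname*{WEff}(A,D)$. For (f), ``$\supseteq$'' follows from Lemma~\ref{l-Eff-prop}(c) together with $\operatorname*{Eff}(F,\operatorname*{int}D)\subseteq F$; for ``$\subseteq$'', if $y^0\in\operatorname*{Eff}(F\cap(y-D),\operatorname*{int}D)$ and $y^1\in F$, $y^1\in y^0-\operatorname*{int}D$, $y^1\neq y^0$, write $y^0=y-d_0$ with $d_0\in D$ and $y^1=y^0-d_1$ with $d_1\in\operatorname*{int}D$; then $y^1=y-(d_0+d_1)$ with $d_0+d_1\in D+\operatorname*{int}D\subseteq D$, so $y^1\in F\cap(y-D)\cap(y^0-\operatorname*{int}D)\setminus\{y^0\}$, contradicting the efficiency of $y^0$.

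Finally, (h) follows from the classical fact that a convex set $D$ with $\operatorname*{int}D\neq\emptyset$ in a topological vector space satisfies $\operatorname*{int}(\operatorname*{cl}D)=\operatorname*{int}D$; then $\operatorname*{WEff}(F,\operatorname*{cl}D)=\operatorname*{Eff}(F,\operatorname*{int}(\operatorname*{cl}D))=\operatorname*{Eff}(F,\operatorname*{int}D)=\operatorname*{WEff}(F,D)$. I expect the only genuine work to be in (e) and (f): the key is to recognize that the hypotheses $D+\operatorname*{int}D\subseteq D\setminus\{0\}$ and $D+\operatorname*{int}D\subseteq D$ are exactly what keeps the ``mixed'' sums inside $\operatorname*{int}D$ (respectively $D$), which makes the contradiction arguments close; (h) in addition rests on citing the interior/closure identity for convex sets.
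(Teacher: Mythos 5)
Your proposal is correct and follows essentially the same route as the paper: parts (a)--(d) and (g) by specializing Lemma~\ref{l-Eff-prop} to the domination set $\operatorname*{int}D$, parts (e) and (f) by the direct decomposition $a=y+d$ and the containment of the mixed sums in $\operatorname*{int}D$ (resp.\ $y-D$), and (h) via $\operatorname*{int}\operatorname*{cl}D=\operatorname*{int}D$ for convex sets with nonempty interior. Your explicit verifications that $\operatorname*{int}D+\operatorname*{int}D\subseteq D$ upgrades to $\subseteq\operatorname*{int}D$ by openness, and likewise that $d+\operatorname*{int}D\subseteq D\setminus\{0\}$ forces $d+\operatorname*{int}D\subseteq\operatorname*{int}D$, are details the paper leaves implicit, so nothing is missing.
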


\begin{proof}
\begin{itemize}
\item[]
\item[(a)] follows from Lemma \ref{l-Eff-prop}(b) with $D_1=\operatorname*{int}D$.
\item[(b)]- (d) result from Lemma \ref{l-Eff-prop}(b)-(d) when replacing $D$ and $D_1$ by their interior.
\item[(e)] Consider some $y^0\in F$ with $y^0\notin \operatorname*{WEff}(A,D)$. $\Rightarrow \exists a\in A\setminus\{y^0\}:\;\; y^0\in a+\operatorname*{int}D$.
Since $A\subseteq F+(D\cup\{0\})$, there exist $y^1\in F, d\in D\cup\{0\}$ such that $y^0\in y^1+d+\operatorname*{int}D$.
$D+\operatorname*{int}D\subseteq D\setminus\{0\}$ implies $0\notin \operatorname*{int}D$ and $y^0\in y^1+(\operatorname*{int}D\setminus\{0\})$. $\Rightarrow y^0\notin \operatorname*{WEff}(F,D)$.\\
Hence, $\operatorname*{WEff}(F,D)\subseteq \operatorname*{WEff}(A,D)$. The assertion follows by (c) since $F\subseteq A$.
\item[(f)] Choose some $y\in Y$.
$\operatorname*{WEff}(F\cap (y-D),D)\supseteq\operatorname*{WEff}(F,D)\cap (y-D)$ results from (c).
Assume there exists some $y^0\in\operatorname*{WEff}(F\cap (y-D),D)\setminus\operatorname*{WEff}(F,D)$.
$\Rightarrow (y^0-\operatorname*{int}D)\cap (F\cap (y-D))\subseteq \{y^0\}$ and $\exists y^1\not=y^0:\; y^1\in F\cap(y^0-\operatorname*{int}D)$.
$y^1\in y^0-\operatorname*{int}D\subseteq (y-D)-\operatorname*{int}D\subseteq y-D$. $\Rightarrow y^1\in (y^0-\operatorname*{int}D)\cap (F\cap (y-D))$, a contradiction.
\item[(g)] is obvious.
\item[(h)] Under the assumptions, $\operatorname*{int}D=\operatorname*{int}\operatorname*{cl}D$.
\end{itemize}
\end{proof}

\begin{corollary}\label{c-FplusD}
Assume that $Y$ is a linear space, $F\subseteq Y$, that $D\subseteq Y$ is a non-trivial pointed convex cone and $F\subseteq A\subseteq F+D$.
\begin{itemize}
\item[(a)] $\operatorname*{Eff}(A,D)=\operatorname*{Eff}(F,D)$.
\item[(b)]If $Y$ is a linear topological space, then $\operatorname*{WEff}(A,D)\cap F=\operatorname*{WEff}(F,D)$.
\end{itemize}
\end{corollary}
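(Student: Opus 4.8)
The plan is to deduce both parts directly from the general statements already available: part~(a) from Lemma~\ref{l-Eff-prop}(d), and part~(b) from Lemma~\ref{l-wEff-prop}(e). So the real content is just checking that the hypotheses of those lemmas are satisfied when $D$ is a non-trivial pointed convex cone. Throughout I would invoke three elementary facts about such a $D$: a convex cone satisfies $D+D\subseteq D$; a cone contains $0$, hence $D\cup\{0\}=D$, and in particular the chain $F\subseteq A\subseteq F+D$ is the same as $F\subseteq A\subseteq F+(D\cup\{0\})$; and pointedness means $D\cap(-D)=\{0\}$.

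For part~(a), I would simply apply Lemma~\ref{l-Eff-prop}(d): the inclusion $F\subseteq A\subseteq F+(D\cup\{0\})$ holds by the remark above, and the two additional hypotheses $D\cap(-D)\subseteq\{0\}$ and $D+D\subseteq D$ hold because $D$ is a pointed convex cone. Hence $\operatorname*{Eff}(A,D)=\operatorname*{Eff}(F,D)$.

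For part~(b), I would apply Lemma~\ref{l-wEff-prop}(e), whose hypotheses are $F\subseteq A\subseteq F+(D\cup\{0\})$ (already available) and $D+\operatorname*{int}D\subseteq D\setminus\{0\}$. For the latter, note first $D+\operatorname*{int}D\subseteq D+D\subseteq D$. To rule out $0$, suppose $d+u=0$ with $d\in D$ and $u\in\operatorname*{int}D$; then $u=-d\in -D$, so $u\in D\cap(-D)=\{0\}$, i.e. $0\in\operatorname*{int}D$. But $0\in\operatorname*{int}D$ forces $D=Y$ (for any $y\in Y$ and sufficiently small $t\in\mathbb{R}_{>}$ one has $ty\in\operatorname*{int}D\subseteq D$, hence $y\in D$ since $D$ is a cone), contradicting the non-triviality of $D$; when $\operatorname*{int}D=\emptyset$ the inclusion $D+\operatorname*{int}D=\emptyset\subseteq D\setminus\{0\}$ is trivial. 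Thus $D+\operatorname*{int}D\subseteq D\setminus\{0\}$, and Lemma~\ref{l-wEff-prop}(e) yields $\operatorname*{WEff}(A,D)\cap F=\operatorname*{WEff}(F,D)$.

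The only step that is not purely mechanical — and hence the ``main obstacle'', though it is a mild one — is verifying $0\notin D+\operatorname*{int}D$; this is precisely the place where the non-triviality of $D$ is used, and it is worth writing out the short argument above so that the reader sees why $D=Y$ is excluded.
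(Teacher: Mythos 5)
Your proposal is correct and follows exactly the route the paper intends: part (a) is Lemma \ref{l-Eff-prop}(d) and part (b) is Lemma \ref{l-wEff-prop}(e), with the only substantive check being that a non-trivial pointed convex cone satisfies $D+D\subseteq D$, $D\cap(-D)=\{0\}$, $D\cup\{0\}=D$, and $D+\operatorname*{int}D\subseteq D\setminus\{0\}$. Your verification of the last inclusion, including the exclusion of $0\in\operatorname*{int}D$ via non-triviality, is sound.
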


Podinovskij and Nogin \cite[Lemma 2.2.1]{pono82} proved part (b) of Corollary \ref{c-FplusD} in $Y=\mathbb{R}^\ell$ for $D=\mathbb{R}^\ell _+$ and $A=F+\mathbb{R}^\ell _+$. They gave the following example that, in general, $\operatorname*{WEff}(F+\mathbb{R}^\ell _+,D)=\operatorname*{WEff}(F,\mathbb{R}^\ell _+)$ is not fulfilled.

\begin{example}
$Y:=\mathbb{R}^2$, $F:=\{(y_1,y_2)\in Y\mid y_2>0\}\cup\{0\}$.
Then $F+\mathbb{R}^2_+=F\cup\{(y_1,y_2)\in Y\mid y_1>0, y_2=0\}$.
$(1,0)^T\in \operatorname*{WEff}(F+\mathbb{R}^2_+,\mathbb{R}^2_+)\setminus \operatorname*{WEff}(F,\mathbb{R}^2_+)$ since $(1,0)^T\notin F$.
\end{example}

Efficient elements are often located on the boundary of the feasible point set.

\begin{theorem}\label{eff-in-bd}
Assume $Y$ is a topological vector space, $F, D\subseteq Y$ and\\
$0\in\operatorname*{bd}(D\setminus\{0\})$. Then
\begin{itemize}
\item[(a)] $\operatorname*{Eff}(F,D)\subseteq\operatorname*{bd}F.$
\item[(b)] If, moreover, $D\cap (-D)\subseteq \{0\}$ and $D+D\subseteq D$, then
$$\operatorname*{Eff}(F,D)\subseteq\operatorname*{bd}A$$
for each set $A\subseteq Y$ with $F\subseteq A\subseteq F+(D\cup\{0\}).$
\end{itemize}
\end{theorem}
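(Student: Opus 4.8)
The plan is to prove (a) by contradiction and then obtain (b) cheaply by applying (a) to the intermediate set $A$ together with Lemma \ref{l-Eff-prop}(d), rather than rerunning the argument.

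For (a), I would take $y^{0}\in\operatorname*{Eff}(F,D)$ and, since $y^{0}\in F$, reduce the claim to showing $y^{0}\notin\operatorname*{int}F$. Assuming the contrary, pick a neighborhood $U$ of $0$ with $y^{0}+U\subseteq F$. The hypothesis $0\in\operatorname*{bd}(D\setminus\{0\})$ yields in particular $0\in\operatorname*{cl}(D\setminus\{0\})$, so every neighborhood of $0$ meets $D\setminus\{0\}$; applying this to $-U$, which is again a neighborhood of $0$ because $y\mapsto -y$ is a homeomorphism of the topological vector space $Y$, produces some $d\in(-U)\cap(D\setminus\{0\})$. Then $-d\in U$ gives $y^{0}-d\in y^{0}+U\subseteq F$, while $d\in D\setminus\{0\}$ gives $y^{0}-d\in(y^{0}-D)\setminus\{y^{0}\}$; hence $y^{0}-d$ is an element of $(F\cap(y^{0}-D))\setminus\{y^{0}\}$, contradicting $F\cap(y^{0}-D)\subseteq\{y^{0}\}$. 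Therefore $y^{0}\in F\setminus\operatorname*{int}F\subseteq\operatorname*{bd}F$.

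For (b), I would observe that the condition $0\in\operatorname*{bd}(D\setminus\{0\})$ does not refer to $F$, so part (a) applied with $A$ in the role of $F$ gives $\operatorname*{Eff}(A,D)\subseteq\operatorname*{bd}A$. Under the extra hypotheses $D\cap(-D)\subseteq\{0\}$ and $D+D\subseteq D$, and since $F\subseteq A\subseteq F+(D\cup\{0\})$, Lemma \ref{l-Eff-prop}(d) yields $\operatorname*{Eff}(A,D)=\operatorname*{Eff}(F,D)$. Combining the two, $\operatorname*{Eff}(F,D)=\operatorname*{Eff}(A,D)\subseteq\operatorname*{bd}A$.

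The only step that needs real care is the extraction of the dominating point in (a): one must subtract from $y^{0}$ a nonzero element of $D$ lying close to $0$ while staying inside $F$, which is exactly why the neighborhood $-U$ (and not $U$) is the right object, and this in turn relies on negation being a homeomorphism in a topological vector space. It is worth noting that the proof uses only $0\in\operatorname*{cl}(D\setminus\{0\})$, not the full boundary condition — consistent with $\operatorname*{Eff}(F,D)=\operatorname*{Eff}(F,D\setminus\{0\})$ from Lemma \ref{l-Eff-prop}(a). Everything else is routine set manipulation.
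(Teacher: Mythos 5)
Your proposal is correct and follows essentially the same route as the paper: for (a) the paper likewise picks a neighborhood $U\subseteq F$ of $y^{0}$, notes that $y^{0}-U$ is a neighborhood of $0$ meeting $D\setminus\{0\}$, and derives the same contradiction with efficiency; for (b) it also invokes Lemma \ref{l-Eff-prop}(d) to identify $\operatorname*{Eff}(A,D)$ with $\operatorname*{Eff}(F,D)$ and then applies (a) to $A$. Your remark that only $0\in\operatorname*{cl}(D\setminus\{0\})$ is actually used is accurate and consistent with the paper's argument.
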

\begin{proof}
\begin{itemize}
\item[]
\item[(a)] Suppose there exists some $y^0\in \operatorname*{Eff}(F,D)\setminus\operatorname*{bd}F$.
$\Rightarrow y^0\in \operatorname*{int}F$. Hence, there exists some neighborhood $U$ of $y^0$ with $U\subseteq F$.
$\Rightarrow W:=y^0-U$ is a neighborhood of $0$.
Since $0\in\operatorname*{bd}(D\setminus\{0\})$, there exists some $d\in W\cap(D\setminus\{0\})\subseteq (y^0-F)\cap(D\setminus\{0\})$.
$\Rightarrow \exists y\in F:\; d=y^0-y\in D\setminus\{0\}$.
$\Rightarrow y\in y^0-(D\setminus\{0\})$, a contradiction.
\item[(b)] results from Lemma \ref{l-Eff-prop}. 
\end{itemize}
\end{proof}

\begin{remark}
Theorem \ref{eff-in-bd} was proved in \cite{Wei85}. There also references to earlier results in $Y=\mathbb{R}^\ell$ for more special sets $D$ are given as well as illustrating examples.
\end{remark}

\begin{proposition}\label{eff-in-algcl}
Assume $Y$ is a linear space, $F, D\subseteq Y$, and that there exists some $d\in D\setminus\{0\}$ such that $td\in D\setminus\{0\}\mbox{ for all } t\in (0,1)$. Then
$$\operatorname*{Eff}(F,D)\subseteq F\setminus\operatorname*{core}F.$$
\end{proposition}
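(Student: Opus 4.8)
The statement is the algebraic–interior analogue of Theorem~\ref{eff-in-bd}(a), so I would argue by contradiction in exactly the same spirit, replacing the topological neighborhood argument by the defining property of the algebraic interior. Suppose, to the contrary, that there is some $y^0\in\operatorname*{Eff}(F,D)\cap\operatorname*{core}F$. Fix the vector $d\in D\setminus\{0\}$ furnished by the hypothesis, for which $td\in D\setminus\{0\}$ for every $t\in(0,1)$.

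Since $y^0\in\operatorname*{core}F$, applying the definition of the algebraic interior to the direction $-d\in Y$ yields some $\delta>0$ with $y^0-td\in F$ for all $t\in[0,\delta]$. Now choose $t_0:=\min\{\delta,\tfrac12\}$; then $t_0\in(0,1)$, so by hypothesis $t_0 d\in D\setminus\{0\}$, and $t_0\le\delta$, so $y^0-t_0 d\in F$. Hence $y^0-t_0 d\in F\cap\bigl(y^0-(D\setminus\{0\})\bigr)$, and $y^0-t_0 d\ne y^0$ because $t_0 d\ne 0$.

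Finally, since $\operatorname*{Eff}(F,D)=\operatorname*{Eff}(F,D\setminus\{0\})$ by Lemma~\ref{l-Eff-prop}(a), the relation $y^0\in\operatorname*{Eff}(F,D)$ gives $F\cap\bigl(y^0-(D\setminus\{0\})\bigr)\subseteq\{y^0\}$, which contradicts $y^0-t_0 d$ being a distinct element of that intersection. This proves $\operatorname*{Eff}(F,D)\cap\operatorname*{core}F=\emptyset$, i.e.\ $\operatorname*{Eff}(F,D)\subseteq F\setminus\operatorname*{core}F$.

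I do not anticipate a real obstacle here; the only points requiring care are (i) invoking the one-sided form of the definition of $\operatorname*{core}F$ along the single direction $-d$ rather than an open neighborhood, and (ii) truncating the step size to stay in $(0,1)$ so that the segment hypothesis $td\in D\setminus\{0\}$ actually applies — exactly the feature that lets us weaken "$0\in\operatorname*{bd}(D\setminus\{0\})$" from Theorem~\ref{eff-in-bd} to the existence of just one admissible ray segment in $D$.
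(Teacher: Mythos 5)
Your argument is correct and is essentially the paper's own proof: both take $y^0\in\operatorname*{core}F$, use the algebraic-interior property along the direction $-d$ to produce some $t\in(0,1)$ with $y^0-td\in F$, and conclude from $td\in D\setminus\{0\}$ that $y^0$ cannot be efficient. The paper states this directly (showing $\operatorname*{core}F\cap\operatorname*{Eff}(F,D)=\emptyset$) while you phrase it as a contradiction with an explicit truncation $t_0=\min\{\delta,\tfrac12\}$, but the content is identical.
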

\begin{proof}
Consider some $y^0\in \operatorname*{core}F$.
$\Rightarrow \exists t\in\mathbb{R}_>:\; t<1$ and $y^0-td\in F$. $\Rightarrow y^0\notin \operatorname*{Eff}(F,D)$. 
\end{proof}

The assumption is fulfilled if $D\setminus\{0\}\not=\emptyset$ and $D\cup\{0\}$ is star-shaped about zero. This is the case if $D$ is a non-trivial cone or $D\cup\{0\}$ is convex and $D\setminus\{0\}\not=\emptyset$.

We get for weakly efficient points \cite{Wei85}:

\begin{theorem}\label{weff-in-bd}
Let $Y$ be a topological vector space, $F, D\subseteq Y$. Then
\begin{equation}\label{bd-in}
F\cap\operatorname*{bd}(F+D)\subseteq\operatorname*{WEff}(F,D).
\end{equation}
Assume now $0\in\operatorname*{bd}(\operatorname*{int}D\setminus\{0\})$.
\begin{itemize}
\item[(a)] $\operatorname*{WEff}(F,D)\subseteq\operatorname*{bd}F.$
\item[(b)] If $D+\operatorname*{int}D\subseteq D\setminus\{0\}$,then
$$\operatorname*{WEff}(F,D)=F\cap\operatorname*{bd}(F+D)$$
and
$$\operatorname*{WEff}(F,D)\subseteq\operatorname*{bd}A$$
for each set $A\subseteq Y$ with $F\subseteq A\subseteq F+(D\cup\{0\}).$
\item[(c)] If $\operatorname*{int}D\cap (-\operatorname*{int}D)\subseteq \{0\}$ and $\operatorname*{int}D+\operatorname*{int}D\subseteq D$, then
$$\operatorname*{WEff}(F,D)\subseteq\operatorname*{bd}A$$
for each set $A\subseteq Y$ with $F\subseteq A\subseteq F\cup(F+\operatorname*{int}D)$.
\end{itemize}
\end{theorem}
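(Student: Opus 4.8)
The plan is to first establish the inclusion (\ref{bd-in}), which requires no extra hypotheses, and then treat parts (a)--(c) separately using the additional assumption $0\in\operatorname*{bd}(\operatorname*{int}D\setminus\{0\})$.

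For (\ref{bd-in}), I would take $y^0\in F\cap\operatorname*{bd}(F+D)$ and argue by contradiction that $y^0\notin\operatorname*{WEff}(F,D)=\operatorname*{Eff}(F,\operatorname*{int}D)$. Then there is some $y^1\in F$ with $y^1\in y^0-\operatorname*{int}D$ and $y^1\ne y^0$, hence $y^0\in y^1+\operatorname*{int}D\subseteq F+\operatorname*{int}D$. Since $F+\operatorname*{int}D$ is open (it is a union of translates of the open set $\operatorname*{int}D$), $y^0\in\operatorname*{int}(F+D)$, contradicting $y^0\in\operatorname*{bd}(F+D)$. (One must be slightly careful: if $\operatorname*{int}D=\emptyset$ then $\operatorname*{WEff}(F,D)=F$ by Lemma \ref{l-wEff-prop}(g) and (\ref{bd-in}) is trivial, so one may assume $\operatorname*{int}D\ne\emptyset$.)

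Part (a) mirrors the proof of Theorem \ref{eff-in-bd}(a), but with $\operatorname*{int}D$ in place of $D$: if some $y^0\in\operatorname*{WEff}(F,D)$ lies in $\operatorname*{int}F$, pick a neighborhood $U$ of $y^0$ with $U\subseteq F$; then $W:=y^0-U$ is a neighborhood of $0$, and since $0\in\operatorname*{bd}(\operatorname*{int}D\setminus\{0\})$ there is some $d\in W\cap(\operatorname*{int}D\setminus\{0\})$, giving $y\in F$ with $y=y^0-d\in y^0-(\operatorname*{int}D\setminus\{0\})$, so $y^0\notin\operatorname*{Eff}(F,\operatorname*{int}D)$, a contradiction. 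For part (b), the inclusion $\operatorname*{WEff}(F,D)\subseteq F$ is immediate, and $\operatorname*{WEff}(F,D)\subseteq\operatorname*{bd}F$ follows from (a); combined with $F\subseteq F+D$ one gets $\operatorname*{WEff}(F,D)\subseteq\operatorname*{bd}(F+D)$ provided one checks that a boundary point of $F$ that lies in $\operatorname*{WEff}(F,D)$ cannot be interior to $F+D$ — this is exactly where the hypothesis $D+\operatorname*{int}D\subseteq D\setminus\{0\}$ enters: if $y^0\in\operatorname*{WEff}(F,D)\cap\operatorname*{int}(F+D)$, then $y^0\in y^1+d+\operatorname*{int}D$ for some $y^1\in F$, $d\in D$, and since $d+\operatorname*{int}D\subseteq\operatorname*{int}D\setminus\{0\}$ (using $0\notin\operatorname*{int}D$, which $D+\operatorname*{int}D\subseteq D\setminus\{0\}$ forces) we get $y^0\in y^1+(\operatorname*{int}D\setminus\{0\})$, contradicting weak efficiency. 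Together with (\ref{bd-in}) this yields the equality $\operatorname*{WEff}(F,D)=F\cap\operatorname*{bd}(F+D)$. The statement about arbitrary $A$ with $F\subseteq A\subseteq F+(D\cup\{0\})$ then follows by applying Lemma \ref{l-wEff-prop}(e) to reduce to $F$ and noting $\operatorname*{bd}A\supseteq\operatorname*{bd}F\cap(\dots)$; more directly, one shows $\operatorname*{WEff}(F,D)\subseteq\operatorname*{bd}A$ by the same interior-point argument applied to $A$ in place of $F+D$, using $A\subseteq F+(D\cup\{0\})$.

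Part (c) is the analogue where $D$ itself need not be "almost a cone" but its interior behaves well: under $\operatorname*{int}D\cap(-\operatorname*{int}D)\subseteq\{0\}$ and $\operatorname*{int}D+\operatorname*{int}D\subseteq D$, one invokes Lemma \ref{l-wEff-prop}(d) to get $\operatorname*{WEff}(A,D)=\operatorname*{WEff}(F,D)$ for $F\subseteq A\subseteq F\cup(F+\operatorname*{int}D)$, and then applies part (a) to $A$: since $0\in\operatorname*{bd}(\operatorname*{int}D\setminus\{0\})$ still holds, $\operatorname*{WEff}(A,D)\subseteq\operatorname*{bd}A$, whence $\operatorname*{WEff}(F,D)=\operatorname*{WEff}(A,D)\subseteq\operatorname*{bd}A$. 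The main obstacle I anticipate is bookkeeping around the degenerate case $\operatorname*{int}D=\emptyset$ and keeping straight which monotonicity-type hypothesis ($D+\operatorname*{int}D\subseteq D\setminus\{0\}$ versus $\operatorname*{int}D+\operatorname*{int}D\subseteq D$) is invoked where; the geometric content of each step is routine once the correct earlier lemma (\ref{l-wEff-prop}(d),(e)) is identified.
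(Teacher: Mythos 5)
Your argument follows the paper's proof in all essentials: the contrapositive argument for (\ref{bd-in}), the neighborhood argument of Theorem \ref{eff-in-bd}(a) transplanted to $\operatorname*{int}D$ for part (a), and the reduction of (b) and (c) to part (a) applied to $A$ via Lemma \ref{l-wEff-prop}(d),(e). The only step you assert without justification --- that $y^0\in\operatorname*{int}(F+D)$ yields $y^0\in y^1+d+\operatorname*{int}D$ with $y^1\in F$, $d\in D$ --- is exactly the neighborhood argument once more (since $0\in\operatorname*{bd}(\operatorname*{int}D\setminus\{0\})$, every neighborhood of $y^0$ meets $y^0-(\operatorname*{int}D\setminus\{0\})$, and a neighborhood contained in $F+D$ then gives the desired decomposition), so this is an elision rather than a gap.
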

\begin{proof}
If $y^0\in F\setminus\operatorname*{WEff}(F,D)$, then there exists some $y\in F\cap(y^0-\operatorname*{int}D)$, which implies $y^0\in\operatorname*{int}(F+D)$. Thus, (\ref{bd-in}) holds.
\begin{itemize}
\item[(a)] and (c) follow immediately from Theorem \ref{eff-in-bd}.
\item[(b)] The second statement results from (a) and Lemma \ref{l-wEff-prop}(e) since\\ $\operatorname*{WEff}(F,D)\subseteq\operatorname*{WEff}(A,D)\subseteq\operatorname*{bd}A$, the first one from the second by (\ref{bd-in}).
\end{itemize}
\end{proof}

\begin{theorem}\label{weff-nocore}
Assume $Y$ is a topological vector space, $F, D\subseteq Y$.
\begin{itemize}
\item[(a)] We have 
\begin{equation}\label{core-in}
F\setminus\operatorname*{core}(F+D)\subseteq\operatorname*{WEff}(F,D).
\end{equation}
\item[(b)] If $D$ is a non-trivial convex cone with nonempty interior, then
$$\operatorname*{WEff}(F,D)=F\setminus\operatorname*{core}(F+D).$$
\end{itemize}
\end{theorem}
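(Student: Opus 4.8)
The plan is to derive part (a) from the already-established inclusion (\ref{bd-in}) of Theorem \ref{weff-in-bd} by comparing the algebraic notion $\operatorname*{core}$ with the topological notion $\operatorname*{int}$, and to derive part (b) by combining (a) with the reverse inclusion $\operatorname*{WEff}(F,D)\subseteq F\setminus\operatorname*{core}(F+D)$, which I would prove directly.

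For part (a), I would start from an element $y^0\in F\setminus\operatorname*{WEff}(F,D)$, i.e.\ $y^0\notin\operatorname*{Eff}(F,\operatorname*{int}D)$, so there is some $y\in F\cap(y^0-\operatorname*{int}D)$ with $d:=y^0-y\in\operatorname*{int}D$. Since $d\in\operatorname*{int}D$, the line segment $\{d+t(-d)\mid t\in[0,\varepsilon)\}$ lies in $\operatorname*{int}D$ for small $\varepsilon$; more to the point, for every $u\in Y$ there is $\delta>0$ with $d+tu\in\operatorname*{int}D\subseteq D$ for $t\in[0,\delta)$. Writing $z:=y^0+tu=y+(d+tu)\in F+D$ for such $t$, we conclude $y^0\in\operatorname*{core}(F+D)$. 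Contrapositively, $F\setminus\operatorname*{core}(F+D)\subseteq\operatorname*{WEff}(F,D)$, which is (\ref{core-in}). (Alternatively one may simply invoke (\ref{bd-in}) together with the general fact $\operatorname*{core}(F+D)\subseteq\operatorname*{int}(F+D)$ when the latter is nonempty, but the direct argument above avoids any hypothesis on interiors.)

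For part (b), assume $D$ is a non-trivial convex cone with $\operatorname*{int}D\neq\emptyset$. The inclusion $\supseteq$ is part (a). For $\subseteq$, take $y^0\in\operatorname*{WEff}(F,D)$ and suppose for contradiction that $y^0\in\operatorname*{core}(F+D)$. Fix any $k\in\operatorname*{int}D$; then $y^0-k\in F+D$ is feasible to perturb, but better: since $y^0\in\operatorname*{core}(F+D)$ there is $t>0$ with $y^0-tk\in F+D$, say $y^0-tk=y^1+d^1$ with $y^1\in F$, $d^1\in D$. Then $y^0-y^1=d^1+tk\in D+\operatorname*{int}D\subseteq\operatorname*{int}D$ (using that $D$ is a convex cone, so $D+\operatorname*{int}D=\operatorname*{int}D$, and $tk\in\operatorname*{int}D$), and moreover $y^0-y^1\neq 0$ because $tk\in\operatorname*{int}D$ and $0\notin\operatorname*{int}D$ (a convex cone with $0\in\operatorname*{int}D$ would be all of $Y$, contradicting non-triviality). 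Hence $y^1\in F\cap(y^0-\operatorname*{int}D)$ with $y^1\neq y^0$, contradicting $y^0\in\operatorname*{Eff}(F,\operatorname*{int}D)=\operatorname*{WEff}(F,D)$.

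The only delicate points are the cone-arithmetic facts that for a convex cone $D$ one has $D+\operatorname*{int}D\subseteq\operatorname*{int}D$ and that $0\notin\operatorname*{int}D$ when $D$ is non-trivial; both are standard in topological vector spaces and I expect them to be the main things to state carefully rather than a genuine obstacle. One should also double-check that in part (b) the hypothesis $\operatorname*{int}D\neq\emptyset$ is what makes the perturbation direction $k$ available — without it $\operatorname*{core}(F+D)$ could still be nonempty while $\operatorname*{WEff}(F,D)=F$ by Lemma \ref{l-wEff-prop}(g), so the statement would fail, confirming the hypothesis is needed.
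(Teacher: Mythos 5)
Your argument is correct. Part (a) is essentially the paper's proof: the paper also shows that $y^0\notin\operatorname*{WEff}(F,D)$ yields some $y\in F\cap(y^0-\operatorname*{core}D)$ and hence $y^0\in\operatorname*{core}(F+D)$; your spelled-out verification that $d+tu\in D$ for small $t$ is just the inclusion $\operatorname*{int}D\subseteq\operatorname*{core}D$ made explicit. For part (b) you take a genuinely different route. The paper does not argue directly: it writes $\operatorname*{WEff}(F,D)=\operatorname*{WEff}(F+D,D)\cap F$ using Lemma \ref{l-wEff-prop}(e) (with $A=F+D$, which is legitimate since $D+\operatorname*{int}D\subseteq\operatorname*{int}D\subseteq D\setminus\{0\}$ for a non-trivial convex cone with nonempty interior) and then applies Proposition \ref{eff-in-algcl} to the set $F+D$ to get $\operatorname*{WEff}(F+D,D)\subseteq(F+D)\setminus\operatorname*{core}(F+D)$. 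Your direct perturbation $y^0-tk\in F+D$, $y^0-y^1=d^1+tk\in D+\operatorname*{int}D\subseteq\operatorname*{int}D$, $y^0-y^1\neq 0$ since $0\notin\operatorname*{int}D$, reaches the same contradiction with exactly the same two cone facts, but is self-contained; the paper's version buys reuse of already-proved machinery and makes visible that the result is really a statement about $F+D$. One caution: the parenthetical alternative you offer for (a) is the one shaky spot — $\operatorname*{core}(F+D)\subseteq\operatorname*{int}(F+D)$ is not a general fact (the general inclusion runs the other way, with equality only for, e.g., convex sets with nonempty interior), and passing from $F\setminus\operatorname*{core}(F+D)$ to $F\cap\operatorname*{bd}(F+D)$ would additionally require $y^0\in\operatorname*{cl}(F+D)$, i.e.\ something like $0\in D$. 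Since you rightly made the direct argument primary, this does not affect the proof, but the aside should be dropped or qualified.
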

\begin{proof}
\begin{itemize}
\item[]
\item[(a)]If $y^0\in F\setminus\operatorname*{WEff}(F,D)$, then there exists some $y\in F\cap(y^0-\operatorname*{core}D)$, which implies $y^0\in\operatorname*{core}(F+D)$. Thus, (\ref{core-in}) holds.
\item[(b)] $\operatorname*{WEff}(F,D)=\operatorname*{WEff}(F+D,D)\cap F$ by Lemma \ref{l-wEff-prop}. $\operatorname*{WEff}(F+D,D)\subseteq (F+D)\setminus\operatorname*{core}(F+D)$ by Proposition \ref{eff-in-algcl}.
\end{itemize}
\end{proof}

\begin{corollary}
Assume $Y$ is a topological vector space, $F\subseteq Y$, and that $D\subset Y$ is a non-trivial convex pointed cone. Then
\begin{eqnarray*}
\operatorname*{Eff}(F,D) & \subseteq & \operatorname*{bd}F\quad\mbox{ and }\\
\operatorname*{Eff}(F,D) & \subseteq & \operatorname*{bd}(F+D).
\end{eqnarray*}
If $D$ has a nonempty interior, then
\begin{eqnarray*}
\operatorname*{WEff}(F,D) & \subseteq & \operatorname*{bd}F\quad\mbox{ and }\\
\operatorname*{WEff}(F,D) & = & F\cap\operatorname*{bd}(F+D).
\end{eqnarray*}
\end{corollary}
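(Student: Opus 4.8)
The plan is to obtain all four inclusions as consequences of Theorems~\ref{eff-in-bd} and~\ref{weff-in-bd}, so that the only genuine work is checking that a non-trivial convex pointed cone $D$ meets the hypotheses of those theorems. Recall that such a $D$ satisfies $D+D\subseteq D$ and $D\cap(-D)=\{0\}$, and that $D\setminus\{0\}\neq\emptyset$ and $0\in D$ because $D$ is a non-trivial cone.

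First I would verify $0\in\operatorname*{bd}(D\setminus\{0\})$. Picking any $d\in D\setminus\{0\}$, the cone property gives $td\in D$ for all $t\geq 0$, and $td\neq 0$ for $t>0$, so $td\in D\setminus\{0\}$ for every $t>0$; letting $t\to 0$ shows $0\in\operatorname*{cl}(D\setminus\{0\})$, and since $0\notin D\setminus\{0\}$ we get $0\notin\operatorname*{int}(D\setminus\{0\})$, hence $0\in\operatorname*{bd}(D\setminus\{0\})$. Theorem~\ref{eff-in-bd}(a) then yields $\operatorname*{Eff}(F,D)\subseteq\operatorname*{bd}F$. Because $0\in D$ we have $F\subseteq F+D=F+(D\cup\{0\})$, so applying Theorem~\ref{eff-in-bd}(b) with $A:=F+D$ — its hypotheses $D\cap(-D)\subseteq\{0\}$ and $D+D\subseteq D$ hold — gives $\operatorname*{Eff}(F,D)\subseteq\operatorname*{bd}(F+D)$.

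Now assume $\operatorname*{int}D\neq\emptyset$. If $0$ were interior to $D$, a neighbourhood $U$ of $0$ would lie in $D$; then for arbitrary $y\in Y$ one has $ty\in U\subseteq D$ for small $t>0$, so $y\in D$, forcing $D=Y$ and contradicting non-triviality. Hence $0\notin\operatorname*{int}D$, so $\operatorname*{int}D\setminus\{0\}=\operatorname*{int}D$. Moreover $\operatorname*{int}D$ is again a cone, since for $t>0$ the map $x\mapsto tx$ is a homeomorphism and $tD=D$, whence $t\cdot\operatorname*{int}D=\operatorname*{int}(tD)=\operatorname*{int}D$. Repeating the argument of the previous paragraph with $\operatorname*{int}D$ in place of $D\setminus\{0\}$ gives $0\in\operatorname*{bd}(\operatorname*{int}D\setminus\{0\})$, so Theorem~\ref{weff-in-bd}(a) yields $\operatorname*{WEff}(F,D)\subseteq\operatorname*{bd}F$.

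For the final equality I would check $D+\operatorname*{int}D\subseteq D\setminus\{0\}$: the set $D+\operatorname*{int}D$ is open, being a union of translates of the open set $\operatorname*{int}D$, and it is contained in $D+D\subseteq D$, hence in $\operatorname*{int}D$, which is contained in $D\setminus\{0\}$ since $0\notin\operatorname*{int}D$. Theorem~\ref{weff-in-bd}(b) then delivers $\operatorname*{WEff}(F,D)=F\cap\operatorname*{bd}(F+D)$, completing the proof. There is no real obstacle here — the statement is pure bookkeeping on top of the two theorems; the only step needing a little care is extracting the three elementary topological facts $0\in\operatorname*{bd}(D\setminus\{0\})$, $0\notin\operatorname*{int}D$, and $D+\operatorname*{int}D\subseteq D\setminus\{0\}$ from the non-triviality, convexity, and pointedness of $D$.
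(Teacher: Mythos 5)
Your proposal is correct and follows exactly the route the paper intends: the corollary is stated without proof as a direct specialization of Theorems~\ref{eff-in-bd} and~\ref{weff-in-bd}, and you correctly supply the only substantive content, namely the verification that a non-trivial convex pointed cone satisfies $0\in\operatorname*{bd}(D\setminus\{0\})$, $0\notin\operatorname*{int}D$, $D+D\subseteq D$, $D\cap(-D)=\{0\}$ and $D+\operatorname*{int}D\subseteq D\setminus\{0\}$.
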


Theorem \ref{weff-in-bd} delivers the following statement about the existence of weakly efficient elements.

\begin{corollary}
Assume $Y$ is a topological vector space, $F, D\subseteq Y$. Then
$$\operatorname*{WEff}(F,D)\not=\emptyset$$
if $F\cap\operatorname*{bd}(F+D)\not=\emptyset$.\\
This condition is fulfilled if $F+D$ is a proper closed subset of $Y$, $D=D+D$, and $0\in D$.
\end{corollary}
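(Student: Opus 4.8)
The plan is to treat the two assertions of the corollary separately. The first one --- that $\operatorname*{WEff}(F,D)\neq\emptyset$ as soon as $F\cap\operatorname*{bd}(F+D)\neq\emptyset$ --- needs nothing beyond Theorem \ref{weff-in-bd}: the inclusion (\ref{bd-in}) reads $F\cap\operatorname*{bd}(F+D)\subseteq\operatorname*{WEff}(F,D)$, so whenever the left-hand set is nonempty the right-hand one is too. I would simply invoke this.

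For the second assertion I must show that the hypotheses ``$F+D$ is a proper closed subset of $Y$'', $D=D+D$ and $0\in D$ force $F\cap\operatorname*{bd}(F+D)\neq\emptyset$. (Here ``proper'' has to be read as ``nonempty and $\neq Y$'', which is the only reading making the statement correct; since $0\in D$ already gives $D\neq\emptyset$, this just amounts to $\emptyset\neq F$ and $F+D\neq Y$.) First I would record two easy reductions: from $0\in D$ we get $F\subseteq F+D$, and since $F+D$ is closed we have $\operatorname*{bd}(F+D)=(F+D)\setminus\operatorname*{int}(F+D)$; combining these gives $F\cap\operatorname*{bd}(F+D)=F\setminus\operatorname*{int}(F+D)$. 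Hence it suffices to prove $F\not\subseteq\operatorname*{int}(F+D)$.

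I would do this by contradiction. Assume $F\subseteq\operatorname*{int}(F+D)$. Since translation by a fixed vector is a homeomorphism of $Y$, for each $d\in D$ we obtain $F+d\subseteq\operatorname*{int}(F+D)+d=\operatorname*{int}(F+D+d)\subseteq\operatorname*{int}(F+D)$, the last inclusion because $F+D+d\subseteq F+(D+D)=F+D$. Taking the union over all $d\in D$ yields $F+D\subseteq\operatorname*{int}(F+D)$, i.e., $F+D$ is open. But $F+D$ is also closed, nonempty and different from $Y$, which contradicts the connectedness of the topological vector space $Y$ (a TVS, being path-connected, has no nonempty proper clopen subset). Therefore $F\setminus\operatorname*{int}(F+D)\neq\emptyset$, i.e., $F\cap\operatorname*{bd}(F+D)\neq\emptyset$, and the corollary follows. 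The argument is short; the only points needing a little care are the interpretation of ``proper'' and the appeal to connectedness of $Y$, neither of which is a genuine obstacle.
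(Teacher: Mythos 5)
Your proof is correct and takes essentially the same approach as the paper: the first assertion is read off from inclusion (\ref{bd-in}), and the second rests on the observation that $D+D=D$ forces $\operatorname*{int}(F+D)+d\subseteq\operatorname*{int}(F+D)$ for every $d\in D$, combined with properness, closedness and the connectedness of $Y$. The only difference is organizational: the paper localizes the argument at a boundary point $y+d$ of $F+D$ (whose existence tacitly uses the connectedness you invoke explicitly), whereas you run the same computation globally to conclude that $F+D$ would otherwise be clopen; your explicit remark that ``proper'' must include nonemptiness is a welcome clarification rather than a deviation.
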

\begin{proof}
The first statement follows immediately from (\ref{bd-in}).\\
$F+D\not= Y$ is closed $\Rightarrow \exists\;y\in F, d\in D:\;y+d\in\operatorname*{bd}(F+D)$.
If $y\in\operatorname*{int}(F+D)$, then $y+d\in d+\operatorname*{int}(F+D)\subseteq\operatorname*{int}(F+D)$ because of $D+D=D$, a contradiction.
This and $y\in F\subseteq F+D$ imply $y\in F\cap\operatorname*{bd}(F+D)$. 
\end{proof}

An existence result for efficient elements will be proved using scalarizing functionals.

We are now going to point out which properties a functional must have in order to be appropriate for the scalarization of vector optimization problems. We will consider a functional as being appropriate for scalarization if its optimal solutions are related to solutions of the vector optimization problem.

For $\varphi :M\to \overline{\mathbb{R}}_{\nu }$, $M$ being an arbitrary set,  we denote the \textbf{set of minimal solutions} of $\varphi$ on $M$ as
\begin{displaymath}
\operatorname*{argmin}_{y\in M}\varphi (y):= \left \{
\begin{array}{cl}
\{ \bar{y}\in M \mid \varphi (\bar{y})=\min\limits_{y\in M}\, \varphi (y)\} & \mbox{ if } \varphi \mbox{ attains a minimum on } M, \\
\emptyset  & \mbox{otherwise}.
\end{array} \right.
\end{displaymath} 
We will also use the abbreviation $\operatorname*{argmin}\nolimits_{M}\varphi :=\operatorname*{argmin}\nolimits_{y\in M}\varphi (y)$.

We can show \cite{Wei90}:

\begin{proposition}\label{p-mon-scal}
Assume that $Y$ is a linear space, $F, D\subseteq Y$ and $\varphi :F\to \mathbb{R}$.
\begin{itemize}
\item[(a)] $\operatorname*{Eff}(F,D)\cap \operatorname*{argmin}\nolimits_{F}\varphi \subseteq \operatorname*{Eff}(\operatorname*{argmin}\nolimits_{F}\varphi ,D)$.
\item[(b)] If $\varphi $ is $D$--monotone on $F$, then $\operatorname*{Eff}(F,D)\cap \operatorname*{argmin}\nolimits_{F}\varphi = \operatorname*{Eff}(\operatorname*{argmin}\nolimits_{F}\varphi ,D)$.\\
If, additionally, $\operatorname*{argmin}\nolimits_{F}\varphi =\{y^0\}$, then $y^0\in \operatorname*{Eff}(F,D)$.
\item[(c)] $\operatorname*{argmin}\nolimits_{F}\varphi \subseteq \operatorname*{Eff}(F,D)$ holds if $\varphi$ is strictly $D$--monotone on $F$.
\end{itemize}
\end{proposition}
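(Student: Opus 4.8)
The plan is to work directly from the definition of $\operatorname*{Eff}$ and of $\operatorname*{argmin}$, treating the three parts in order since (b) will build on the reasoning of (a). Throughout, write $M := \operatorname*{argmin}_F \varphi$ and let $m$ denote the minimum value of $\varphi$ on $F$ (in the cases where it is attained; if it is not attained, $M = \emptyset$ and every assertion about $M$ is vacuous, so I may assume $m$ exists).

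For part (a), I would take $y^0 \in \operatorname*{Eff}(F,D) \cap M$ and show $y^0 \in \operatorname*{Eff}(M,D)$, i.e.\ that $M \cap (y^0 - D) \subseteq \{y^0\}$. This is immediate: $M \subseteq F$, so $M \cap (y^0 - D) \subseteq F \cap (y^0 - D) \subseteq \{y^0\}$, the last inclusion because $y^0 \in \operatorname*{Eff}(F,D)$. (This is really just Lemma \ref{l-Eff-prop}(c) applied with $F_1 = M$, which I could cite instead.)

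For part (b), the nontrivial direction is $\operatorname*{Eff}(M,D) \subseteq \operatorname*{Eff}(F,D) \cap M$; the reverse inclusion is part (a). So let $y^0 \in \operatorname*{Eff}(M,D)$; then $y^0 \in M$, and it remains to show $y^0 \in \operatorname*{Eff}(F,D)$, i.e.\ $F \cap (y^0 - D) \subseteq \{y^0\}$. Suppose $y \in F$ with $y^0 - y \in D$. By $D$-monotonicity of $\varphi$ on $F$ we get $\varphi(y) \le \varphi(y^0) = m$; since $m$ is the minimum of $\varphi$ on $F$ and $y \in F$, also $\varphi(y) \ge m$, hence $\varphi(y) = m$ and $y \in M$. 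Then $y \in M \cap (y^0 - D) \subseteq \{y^0\}$ because $y^0 \in \operatorname*{Eff}(M,D)$, so $y = y^0$, as required. The final sentence of (b) is then the special case $M = \{y^0\}$: from $\operatorname*{Eff}(M,D) = \operatorname*{Eff}(\{y^0\},D) = \{y^0\}$ and the equality just proved, $y^0 \in \operatorname*{Eff}(F,D)$.

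For part (c), let $y^0 \in M$ and suppose $y \in F \cap (y^0 - D)$. If $y \neq y^0$, then $y^0 - y \in D \setminus \{0\}$, so strict $D$-monotonicity on $F$ gives $\varphi(y) < \varphi(y^0) = m$, contradicting that $m$ is the minimum of $\varphi$ on $F$. Hence $y = y^0$, so $F \cap (y^0 - D) \subseteq \{y^0\}$ and $y^0 \in \operatorname*{Eff}(F,D)$. I do not anticipate a genuine obstacle here; the only point requiring a little care is the bookkeeping around the case where $\varphi$ does not attain its minimum on $F$ (so $M = \emptyset$), which should be dispatched with a single remark at the outset, and the consistent use of the fact that membership in $M$ is equivalent to $y \in F$ together with $\varphi(y) = m$.
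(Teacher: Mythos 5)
Your proposal is correct and follows essentially the same route as the paper: part (a) via Lemma \ref{l-Eff-prop}(c), part (b) by using $D$-monotonicity to show that any $y\in F\cap(y^0-D)$ must itself lie in $\operatorname*{argmin}_F\varphi$ (you argue directly where the paper argues by contradiction, but the key step is identical), and part (c) by deriving $\varphi(y)<\varphi(y^0)$ from strict $D$-monotonicity, contradicting minimality. No gaps; your explicit remark about the vacuous case $\operatorname*{argmin}_F\varphi=\emptyset$ is a harmless addition the paper leaves implicit.
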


\begin{proof}
\begin{itemize}
\item[]
\item[(a)] results from Lemma \ref{l-Eff-prop}(c) since $\operatorname*{argmin}\nolimits_{F}\varphi \subseteq F$.
\item[(b)] Consider some $y^0\in \operatorname*{Eff}(\operatorname*{argmin}\nolimits_{F}\varphi ,D)$ and assume\\
$y^0\notin \operatorname*{Eff}(F,D)\cap \operatorname*{argmin}\nolimits_{F}\varphi $.
$\Rightarrow y^0\notin \operatorname*{Eff}(F,D)$. $\Rightarrow \exists y\not= y^0:\; y\in F\cap (y^0-D)$.
$\Rightarrow y^0-y\in D$. $\Rightarrow \varphi (y^0)\geq \varphi (y)$ since $\varphi $ is $D$--monotone on $F$.
$\Rightarrow y\in \operatorname*{argmin}\nolimits_{F}\varphi \cap (y^0-D)$. $\Rightarrow y^0\notin \operatorname*{Eff}(\operatorname*{argmin}\nolimits_{F}\varphi ,D)$, a contradiction.\\
If $\operatorname*{argmin}\nolimits_{F}\varphi =\{y^0\}$, then $\operatorname*{Eff}(\operatorname*{argmin}\nolimits_{F}\varphi ,D)=\{y^0\}$, which yields the assertion.
\item[(c)] Consider some $y^0\in \operatorname*{argmin}\nolimits_{F}\varphi $ and assume $y^0\notin \operatorname*{Eff}(F,D)$.
$\Rightarrow \exists y\not= y^0:\; y\in F\cap (y^0-D)$.
$\Rightarrow y^0-y\in D\setminus\{0\}$. $\Rightarrow \varphi (y^0)> \varphi (y)$ since $\varphi $ is strictly $D$--monotone on $F$, a contradiction to $y^0\in \operatorname*{argmin}\nolimits_{F}\varphi $.

\end{itemize}
\end{proof}

Immediately from the previous proposition, we get the related statements for weakly efficient elements.

\begin{proposition}\label{p-mon-wscal}
Assume that $Y$ is a topological vector space, $F, D\subseteq Y$ and $\varphi :F\to \mathbb{R}$.
\begin{itemize}
\item[(a)] $\operatorname*{WEff}(F,D)\cap \operatorname*{argmin}\nolimits_{F}\varphi \subseteq \operatorname*{WEff}(\operatorname*{argmin}\nolimits_{F}\varphi ,D)$.
\item[(b)] If $\varphi $ is $(\operatorname*{int}D)$-monotone on $F$, then\\
$\operatorname*{WEff}(F,D)\cap \operatorname*{argmin}\nolimits_{F}\varphi = \operatorname*{WEff}(\operatorname*{argmin}\nolimits_{F}\varphi ,D)$.\\
If, additionally, $\operatorname*{argmin}\nolimits_{F}\varphi =\{y^0\}$, then $y^0\in \operatorname*{WEff}(F,D)$.
\item[(c)] $\operatorname*{argmin}\nolimits_{F}\varphi \subseteq \operatorname*{WEff}(F,D)$ holds if $\varphi$ is strictly $(\operatorname*{int}D)$-monotone on $F$.
\end{itemize}
\end{proposition}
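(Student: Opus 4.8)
The plan is to reduce everything to Proposition \ref{p-mon-scal} by specializing the domination set there to $\operatorname*{int}D$. Recall that by definition $\operatorname*{WEff}(F,D)=\operatorname*{Eff}(F,\operatorname*{int}D)$, and that being $(\operatorname*{int}D)$-monotone (resp. strictly $(\operatorname*{int}D)$-monotone) on $F$ is literally the same as being $B$-monotone (resp. strictly $B$-monotone) on $F$ for the set $B:=\operatorname*{int}D\subseteq Y$. Since a topological vector space is in particular a linear space, and $\operatorname*{int}D$ is just some subset of $Y$, Proposition \ref{p-mon-scal} applies verbatim with $D$ replaced by $\operatorname*{int}D$.

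Concretely, for part (a) I would invoke Proposition \ref{p-mon-scal}(a) with domination set $\operatorname*{int}D$ to get
\[
\operatorname*{Eff}(F,\operatorname*{int}D)\cap \operatorname*{argmin}\nolimits_{F}\varphi \subseteq \operatorname*{Eff}(\operatorname*{argmin}\nolimits_{F}\varphi ,\operatorname*{int}D),
\]
and then rewrite both sides using $\operatorname*{WEff}(\,\cdot\,,D)=\operatorname*{Eff}(\,\cdot\,,\operatorname*{int}D)$. For part (b), if $\varphi$ is $(\operatorname*{int}D)$-monotone on $F$, Proposition \ref{p-mon-scal}(b) applied with domination set $\operatorname*{int}D$ gives $\operatorname*{Eff}(F,\operatorname*{int}D)\cap \operatorname*{argmin}_{F}\varphi = \operatorname*{Eff}(\operatorname*{argmin}_{F}\varphi ,\operatorname*{int}D)$, i.e. the claimed equality, and the addendum about $\operatorname*{argmin}_F\varphi=\{y^0\}$ carries over directly. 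For part (c), strict $(\operatorname*{int}D)$-monotonicity of $\varphi$ on $F$ together with Proposition \ref{p-mon-scal}(c) (domination set $\operatorname*{int}D$) yields $\operatorname*{argmin}_{F}\varphi \subseteq \operatorname*{Eff}(F,\operatorname*{int}D)=\operatorname*{WEff}(F,D)$.

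There is essentially no obstacle: the statement is a pure corollary obtained by the substitution $D\rightsquigarrow\operatorname*{int}D$. The only point requiring a word of care is that the three notions appearing in Proposition \ref{p-mon-wscal} ($\operatorname*{WEff}$, $(\operatorname*{int}D)$-monotonicity, strict $(\operatorname*{int}D)$-monotonicity) each match the corresponding notion in Proposition \ref{p-mon-scal} under this substitution, which is immediate from the definitions. Hence the proof amounts to one sentence citing Proposition \ref{p-mon-scal} and unfolding the definition of $\operatorname*{WEff}$.
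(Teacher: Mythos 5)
Your proposal is correct and coincides with the paper's own argument: the paper derives Proposition \ref{p-mon-wscal} "immediately from the previous proposition," i.e.\ exactly by the substitution $D\rightsquigarrow\operatorname*{int}D$ in Proposition \ref{p-mon-scal} together with the definition $\operatorname*{WEff}(\cdot,D)=\operatorname*{Eff}(\cdot,\operatorname*{int}D)$. Your additional remark that the monotonicity notions match under this substitution is the only point worth checking, and you handle it correctly.
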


For efficient elements, we can now prove the following existence result.

\begin{theorem}\label{ext-min}
Assume that $Y$ is a topological vector space, 
\begin{itemize}
\item[(i)] $D\subset Y$ is a set for which there exists some proper closed subset $H$ of $Y$ with
$\operatorname*{core}0^+H\not=\emptyset$ and $H-(D\setminus\{0\})\subseteq 
\operatorname*{core}H$,
\item[(ii)] $F\subseteq Y$ is a nonempty compact set or ($D+D\subseteq D$ and there exists some $y\in Y$ such that $F\cap (y-D)$ is a nonempty compact set).
\end{itemize}
Then
$$\operatorname*{Eff}(F,D)\not=\emptyset.$$
Each set $D$ for which $D\setminus\{0\}$ is contained in the core of a non-trivial closed convex cone $C$ fulfills the assumption (i).
\end{theorem}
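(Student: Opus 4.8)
The strategy is scalarization: I would produce a real-valued, lower semicontinuous, strictly $D$-monotone functional $\varphi$ on $Y$ of the type studied in Section~\ref{s-uni}, minimize it over a compact set, and conclude with Proposition~\ref{p-mon-scal}(c). One must be careful about orientation, since hypothesis (i) controls $H-(D\setminus\{0\})$ whereas Lemma~\ref{t251ua}(i) produces strict $D$-monotonicity of $\varphi_{-H,k}$ out of $H+(D\setminus\{0\})\subseteq\operatorname*{core}H$. So the first step is to pass to the reflected set $G:=-H$. It is again a proper closed subset of $Y$; from $0^+(-H)=-0^+H$ one gets $\operatorname*{core}0^+G=-\operatorname*{core}0^+H\neq\emptyset$; and $\operatorname*{core}G=-\operatorname*{core}H$ turns the hypothesis into $G+(D\setminus\{0\})\subseteq\operatorname*{core}G$. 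I would then pick $k\in\operatorname*{core}0^+G$; necessarily $k\neq 0$, because $0\in\operatorname*{core}0^+G$ would force the cone $0^+G$ to be all of $Y$, hence (as $G$ is a nonempty proper subset) $G=Y$. Thus $(H1_{0-G,k})$ holds and we may set $\varphi:=\varphi_{-G,k}$.

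Next I would record the relevant properties of $\varphi$. By Lemma~\ref{t251ua}(c), $k\in\operatorname*{core}0^+G$ makes $\varphi$ finite-valued, so $\operatorname*{dom}\varphi=Y$ and, by Lemma~\ref{t251ua}, $\varphi$ is lower semicontinuous on $Y$; and by Lemma~\ref{t251ua}(i), since $\varphi$ is finite-valued and $G+(D\setminus\{0\})\subseteq\operatorname*{core}G$, $\varphi$ is strictly $D$-monotone on $Y$, hence on every subset. Now if $F$ is nonempty and compact, the real-valued lower semicontinuous $\varphi$ attains its minimum on $F$, so $\operatorname*{argmin}_F\varphi\neq\emptyset$, and Proposition~\ref{p-mon-scal}(c) gives $\emptyset\neq\operatorname*{argmin}_F\varphi\subseteq\operatorname*{Eff}(F,D)$. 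In the alternative case, where $D+D\subseteq D$ and $F_0:=F\cap(y-D)$ is nonempty and compact for some $y\in Y$, I would apply what was just shown to $F_0$ and then Lemma~\ref{l-Eff-prop}(e): $\emptyset\neq\operatorname*{Eff}(F_0,D)=\operatorname*{Eff}(F,D)\cap(y-D)$, whence $\operatorname*{Eff}(F,D)\neq\emptyset$ as well.

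For the concluding sentence, suppose $D\setminus\{0\}\subseteq\operatorname*{core}C$ for a non-trivial closed convex cone $C$, and take $H:=-C$. Then $H$ is proper and closed, $0^+H=-C$ (as $C$ is a convex cone), so $\operatorname*{core}0^+H=-\operatorname*{core}C$, which is nonempty since it contains $-(D\setminus\{0\})$ (the degenerate case $D\subseteq\{0\}$ being trivial for the theorem). Moreover $C+\operatorname*{core}C\subseteq\operatorname*{core}C$ for a convex cone, so $C+(D\setminus\{0\})\subseteq\operatorname*{core}C$ and hence $H-(D\setminus\{0\})=-(C+(D\setminus\{0\}))\subseteq-\operatorname*{core}C=\operatorname*{core}H$; this is exactly (i).

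The only genuinely delicate point is the sign bookkeeping in the first paragraph — choosing the orientation of the generating set so that the resulting Minkowski-type functional is strictly $D$-monotone rather than strictly $(-D)$-monotone. After that, the proof is a routine assembly: Lemma~\ref{t251ua} (finiteness and lower semicontinuity), the Weierstrass argument on a compact set, Proposition~\ref{p-mon-scal}(c), Lemma~\ref{l-Eff-prop}(e) for the localized case, and the elementary identity $C+\operatorname*{core}C\subseteq\operatorname*{core}C$ for convex cones.
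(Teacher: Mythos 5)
Your proof is correct and follows essentially the same route as the paper: the paper also minimizes $\varphi_{H,k}$ with $k\in-\operatorname*{core}0^{+}H$ (which is exactly your $\varphi_{-G,k}$ for $G=-H$), invokes Lemma \ref{t251ua} for finiteness, lower semicontinuity and strict $D$-monotonicity, applies Proposition \ref{p-mon-scal}(c) after a Weierstrass argument, handles the second case via Lemma \ref{l-Eff-prop}(e), and takes $H=-C$ for the final claim. Your explicit sign bookkeeping and the remark on the degenerate case $D\subseteq\{0\}$ are just slightly more detailed versions of the same argument.
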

\begin{proof}
\begin{itemize}
\item[]
\item[(a)] First we assume that $F$ is nonempty and compact.\\
Choose some $k\in -\operatorname*{core}0^+H$. $(H1_{H,k})$ is fulfilled, $\varphi _{H,k}$ is lower semicontinuous and finite-valued because of Lemma \ref{t251ua}. Hence, $\varphi _{H,k}$ attains a minimum on $F$. $\varphi _{H,k}$ is strictly $D$-monotone by Lemma \ref{t251ua}. Hence, the minimizers of $\varphi _{H,k}$ belong to $\operatorname*{Eff}(F,D)$ because of Proposition \ref{p-mon-scal}.
\item[(b)] Assume now $D+D\subseteq D$ and that $F\cap (y-D)$ is nonempty and compact. $\operatorname*{Eff}(F\cap (y-D),D)\not=\emptyset$ because of (a).
By Lemma \ref{l-Eff-prop}$, \operatorname*{Eff}(F\cap (y-D),D)=\operatorname*{Eff}(F,D)\cap (y-D)$, which yields $\operatorname*{Eff}(F,D)\not=\emptyset$.
\end{itemize}
For the final statement, take $H=-C$.
\end{proof}

Further statements about the existence of efficient elements are given in \cite{Wei85}. A comprehensive study of existence results for optimal elements w.r.t. relations and to sets is contained in \cite{GopRiaTamZal:03}.
\smallskip

\section{Scalarization of the Efficient and the Weakly Efficient Point Set by Functionals with Uniform Sublevel Sets}\label{s-scal-uni}

We will now derive conditions for efficient and weakly efficient elements by the functionals with uniform sublevel sets which we have investigated in \cite{Wei16a} on the base of the lemmata given in the previous section. Here, we use functionals $\varphi _{a-H,k}$, where $a\in Y$ can be considered to be some reference point and $H\subset Y$ is a set related to the domination set $D$.

We will use the following supposition:

\quad

\begin{tabular}{ll}
(H1--VOP$_{H,k}$): & $Y$ is a topological vector space, $F, D\subset Y$, and\\
& $H$ is a closed proper subset of $Y$ with $k\in 0^+H\setminus \{0\}$. 
\end{tabular}

\quad

Note that (H1--VOP$_{H,k}$) implies $(H1_{-H,k})$ and $(H1_{a-H,k})$ for each $a\in Y$ and that $\varphi _{a-H,k}$ is finite-valued if $k\in\operatorname*{core}0^+H$.  

Even if the functionals $\varphi _{a-H,k}$ are not defined on the whole set $F$, they can deliver efficient and weakly efficient elements of $F$.

\begin{lemma}\label{hab-l421}
Suppose (H1--VOP$_{H,k}$), $a\in Y$. 
\begin{itemize}
\item[(a)] $\operatorname*{Eff}(F,D)\cap \operatorname*{dom}\varphi _{a-H,k}\subseteq \operatorname*{Eff}(F\cap\operatorname*{dom}\varphi _{a-H,k},D)$.
\item[(b)] $H+D\subseteq H\implies  \operatorname*{Eff}(F,D)\cap \operatorname*{dom}\varphi _{a-H,k}= \operatorname*{Eff}(F\cap\operatorname*{dom}\varphi _{a-H,k},D)$.
\item[(c)] $\operatorname*{WEff}(F,D)\cap \operatorname*{dom}\varphi _{a-H,k}\subseteq \operatorname*{WEff}(F\cap\operatorname*{dom}\varphi _{a-H,k},D)$.
\item[(d)] $H+\operatorname*{int}D\subseteq H\implies  \operatorname*{WEff}(F,D)\cap \operatorname*{dom}\varphi _{a-H,k}= \operatorname*{WEff}(F\cap\operatorname*{dom}\varphi _{a-H,k},D)$.
\end{itemize}
\end{lemma}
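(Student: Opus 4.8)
The statement is Lemma \ref{hab-l421}, which compares efficiency on $F$ with efficiency on the restricted set $F\cap\operatorname*{dom}\varphi_{a-H,k}$. The plan is to apply the general set-inclusion machinery already built in Section \ref{s-basics-vo}, namely Lemma \ref{l-Eff-prop} and Lemma \ref{l-wEff-prop}, so that no direct $\varepsilon$-$\delta$ work is needed. For the inclusions (a) and (c), which hold with no extra hypothesis, the key observation is simply that $F\cap\operatorname*{dom}\varphi_{a-H,k}\subseteq F$, so Lemma \ref{l-Eff-prop}(c) (respectively Lemma \ref{l-wEff-prop}(c)) gives $\operatorname*{Eff}(F,D)\cap(F\cap\operatorname*{dom}\varphi_{a-H,k})\subseteq\operatorname*{Eff}(F\cap\operatorname*{dom}\varphi_{a-H,k},D)$, and $\operatorname*{Eff}(F,D)\cap(F\cap\operatorname*{dom}\varphi_{a-H,k})=\operatorname*{Eff}(F,D)\cap\operatorname*{dom}\varphi_{a-H,k}$ because $\operatorname*{Eff}(F,D)\subseteq F$.

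**Key steps for (b) and (d).** The reverse inclusion in (b) is the substantive part. Here I would exploit the monotonicity that $H+D\subseteq H$ buys. By Lemma \ref{t251ua}(a), the sublevel sets of $\varphi_{a-H,k}$ are exactly the translates $a-H+tk$; and by Lemma \ref{t251ua}(h), $H+D\subseteq H$ makes $\varphi_{a-H,k}$ $D$-monotone on its domain. The crucial structural fact I want is that $H+D\subseteq H$ forces $\operatorname*{dom}\varphi_{a-H,k}=a-H+\mathbb{R}k$ to be "downward closed" along $D$ in the sense that $(\operatorname*{dom}\varphi_{a-H,k})-D\subseteq\operatorname*{dom}\varphi_{a-H,k}$: indeed, if $y=a-h+tk$ with $h\in H$ and $d\in D$, then $y-d=a-(h+d)+tk\in a-H+\mathbb{R}k$ since $h+d\in H$. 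Given this, take $y^0\in\operatorname*{Eff}(F\cap\operatorname*{dom}\varphi_{a-H,k},D)$; it lies in $\operatorname*{dom}\varphi_{a-H,k}$, and I must show $F\cap(y^0-D)\subseteq\{y^0\}$. If $y\in F$ with $y=y^0-d$, $d\in D$, then $y=y^0-d\in(\operatorname*{dom}\varphi_{a-H,k})-D\subseteq\operatorname*{dom}\varphi_{a-H,k}$, so $y\in F\cap\operatorname*{dom}\varphi_{a-H,k}$, and efficiency of $y^0$ in the restricted set yields $y=y^0$. Thus $y^0\in\operatorname*{Eff}(F,D)$, and combining with (a) gives equality. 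Part (d) is verbatim the same argument with $D$ replaced by $\operatorname*{int}D$ throughout: the hypothesis $H+\operatorname*{int}D\subseteq H$ gives $(\operatorname*{dom}\varphi_{a-H,k})-\operatorname*{int}D\subseteq\operatorname*{dom}\varphi_{a-H,k}$, and $\operatorname*{WEff}(\cdot,D)=\operatorname*{Eff}(\cdot,\operatorname*{int}D)$ by definition, so (d) follows from (b) applied with the domination set $\operatorname*{int}D$ once one checks that $H+\operatorname*{int}D\subseteq H$ is the relevant hypothesis — which it is, by construction.

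**Main obstacle.** The only place requiring care is the reverse inclusion in (b)/(d): one must verify that the domain $a-H+\mathbb{R}k$ absorbs subtraction of elements of $D$ (resp.\ $\operatorname*{int}D$), which is precisely where $H+D\subseteq H$ (resp.\ $H+\operatorname*{int}D\subseteq H$) enters. Everything else is a direct citation of Lemma \ref{l-Eff-prop}(c) and Lemma \ref{l-wEff-prop}(c) together with the elementary fact $\operatorname*{Eff}(F,D),\operatorname*{WEff}(F,D)\subseteq F$. I do not expect to need lower semicontinuity or finiteness of $\varphi_{a-H,k}$ here — those become relevant only in the scalarization theorems that follow this lemma — so the proof should be short and purely set-theoretic.
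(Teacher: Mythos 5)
Your proposal is correct and follows essentially the same route as the paper: parts (a) and (c) via Lemma \ref{l-Eff-prop}(c), and the reverse inclusion in (b) via the observation that $H+D\subseteq H$ makes $\operatorname*{dom}\varphi_{a-H,k}=a-H+\mathbb{R}k$ absorb subtraction of $D$, with (d) obtained by substituting $\operatorname*{int}D$ for $D$. The paper merely phrases the key step of (b) contrapositively, which is an immaterial difference.
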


\begin{proof}
\begin{itemize}
\item[]
\item[(a)] results from Lemma \ref{l-Eff-prop}(c).
\item[(b)] Consider an arbitrary $y^0\in F\cap\operatorname*{dom}\varphi _{a-H,k}$. $\Rightarrow \exists t\in\mathbb{R}:\;y^0\in a-H+tk$. Assume $y^0\notin\operatorname*{Eff}(F,D)$. $\Rightarrow \exists y\in F\cap (y^0-(D\setminus\{0\})$. $\Rightarrow y\in a-H+tk-D\subseteq a+tk-H\subseteq \operatorname*{dom}\varphi _{a-H,k}$.
$\Rightarrow y^0\notin\operatorname*{Eff}(F\cap\operatorname*{dom}\varphi _{a-H,k},D)$. 
\item[(c)] and (d) follow from (a) and (b) with $\operatorname*{int}D$ instead of $D$.
\end{itemize}
\end{proof}

Let us now first give some sufficient conditions for efficient and weakly efficient points by minimal solutions of functions $\varphi _{a-H,k}$. Keep in mind that $F\cap\operatorname*{dom}\varphi _{a-H,k}$ is the feasible range of $\operatorname*{min}_{y\in F}\varphi _{a-H,k}(y)$.

\begin{theorem}\label{hab-t421}
Suppose (H1--VOP$_{H,k}$), $a\in Y$. Define 
$$\Psi:=\operatorname*{argmin}_{y\in F}\varphi _{a-H,k}(y).$$
Then:
\begin{itemize}
\item[(a)] $\operatorname*{Eff}(F,D)\cap \Psi\subseteq \operatorname*{Eff}(\Psi,D)$.
\item[(b)] $H+D\subseteq H\implies  \operatorname*{Eff}(F,D)\cap \Psi= \operatorname*{Eff}(\Psi,D)$.
\item[(c)] $H+D\subseteq H$ and $\Psi=\{ y^0\}$ imply $y^0\in\operatorname*{Eff}(F,D)$.
\item[(d)] Assume that $F\subseteq \operatorname*{dom}\varphi _{a-H,k}$ is convex and that
$H$ is a strictly convex set with $H+D\subseteq H$ and $H+\mathbb{R}_>k\subseteq \operatorname*{int}H$. Then $\Psi\subseteq\operatorname*{Eff}(F,D)$.\\
In this case, $\Psi$ contains at most one element, which has to be an extreme point of
$F$ and is the only local minimizer of $\varphi _{a-H,k}$ on
$F$.
\item[(e)] $H+(D\setminus\{0\})\subseteq \operatorname*{core}H\implies \Psi\subseteq \operatorname*{Eff}(F,D)$.
\item[(f)] $H+\operatorname*{int}D\subseteq H\implies  \Psi\subseteq \operatorname*{WEff}(F,D)$.
\end{itemize}
\end{theorem}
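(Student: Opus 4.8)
The plan is to prove parts (a)--(f) by reducing to the scalarization machinery of Proposition \ref{p-mon-scal} together with the structural facts about $\varphi_{a-H,k}$ collected in Lemma \ref{t251ua} and Lemma \ref{prop-funcII}. Observe first that by Lemma \ref{t251ua}(h) the condition $H+D\subseteq H$ makes $\varphi_{a-H,k}$ a $D$-monotone functional, and by Lemma \ref{t251ua}(i) the condition $H+(D\setminus\{0\})\subseteq\operatorname*{core}H$ makes $\varphi_{a-H,k}$ strictly $D$-monotone on any subset of $Y$ on which it is finite-valued. The only subtlety is that $\Psi=\operatorname*{argmin}_{y\in F}\varphi_{a-H,k}(y)$ is really the argmin over $F\cap\operatorname*{dom}\varphi_{a-H,k}$, since points outside the domain either take value $\nu$ (not comparable) or $-\infty$; so I would first note that $\Psi\subseteq F\cap\operatorname*{dom}\varphi_{a-H,k}$ and, in the finite-valued setting relevant to (d)--(f), that $\varphi_{a-H,k}$ restricted to $F$ is a genuine $\mathbb R$-valued function to which Proposition \ref{p-mon-scal} applies verbatim.

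For (a), this is immediate from Lemma \ref{l-Eff-prop}(c) with $F_1=\Psi\subseteq F$. For (b), I would combine (a) with the reverse inclusion: if $y^0\in\operatorname*{Eff}(\Psi,D)$ but $y^0\notin\operatorname*{Eff}(F,D)$, pick $y\in F\cap(y^0-(D\setminus\{0\}))$; then $y\in\operatorname*{dom}\varphi_{a-H,k}$ by the domain-invariance argument in the proof of Lemma \ref{hab-l421}(b), and $D$-monotonicity of $\varphi_{a-H,k}$ (Lemma \ref{t251ua}(h)) gives $\varphi_{a-H,k}(y)\le\varphi_{a-H,k}(y^0)=\min_F\varphi_{a-H,k}$, forcing $y\in\Psi\cap(y^0-(D\setminus\{0\}))$, contradicting $y^0\in\operatorname*{Eff}(\Psi,D)$. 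Part (c) is the special case $\Psi=\{y^0\}$ of (b), since $\operatorname*{Eff}(\{y^0\},D)=\{y^0\}$. Part (e) follows directly from Proposition \ref{p-mon-scal}(c): under $H+(D\setminus\{0\})\subseteq\operatorname*{core}H$, Lemma \ref{t251ua}(c) gives that $\varphi_{a-H,k}$ is finite-valued (note $\operatorname*{core}0^+H\neq\emptyset$ is forced since $\operatorname*{core}H\neq\emptyset$, or one argues directly that $k\in\operatorname*{core}0^+H$), hence strictly $D$-monotone on $F$ by Lemma \ref{t251ua}(i), so $\Psi\subseteq\operatorname*{Eff}(F,D)$. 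Part (f) is the analogous application with $\operatorname*{int}D$ in place of $D$: $H+\operatorname*{int}D\subseteq H$ makes $\varphi_{a-H,k}$ $(\operatorname*{int}D)$-monotone, and then Proposition \ref{p-mon-wscal}(b) (or the argmin part thereof) yields $\Psi\subseteq\operatorname*{WEff}(F,D)$; actually one wants strict $(\operatorname*{int}D)$-monotonicity, so I would instead invoke that $H+\operatorname*{int}D\subseteq H$ together with $0\notin\operatorname*{int}D$ (automatic when $\operatorname*{int}D\neq\emptyset$ in the relevant cases, else $\operatorname*{WEff}(F,D)=F$ trivially by Lemma \ref{l-wEff-prop}(g)) and apply Proposition \ref{p-mon-wscal}(c) after checking strict monotonicity via a boundary argument, or simply cite that $D$-monotone plus minimality of the value already places the minimizer in $\operatorname*{WEff}$.

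Part (d) is the one requiring real work. Here $F\subseteq\operatorname*{dom}\varphi_{a-H,k}$ is convex, $H$ is strictly convex with $H+\mathbb R_>k\subseteq\operatorname*{int}H$ and $H+D\subseteq H$. The plan: by Lemma \ref{prop-funcII}, $(H2_{a-H,k})$ holds, $\varphi_{a-H,k}$ is continuous on its domain and, since $H$ is strictly convex, $\varphi_{a-H,k}$ is strictly quasiconvex (Lemma \ref{prop-funcII}(b), noting properness holds because $F\subseteq\operatorname*{dom}\varphi_{a-H,k}$ is nonempty when $\Psi\neq\emptyset$). A strictly quasiconvex function on a convex set attains its minimum at at most one point and has no non-global local minimizers --- this is the standard argument: if $y^1\neq y^2$ were two minimizers, the midpoint would have strictly smaller value, a contradiction; and a local minimizer that is not global is likewise excluded by taking a convex combination with a strictly better point. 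That $\Psi\subseteq\operatorname*{Eff}(F,D)$ then follows from (b)/(c) since $H+D\subseteq H$ gives $D$-monotonicity and $\Psi$ has at most one element. Finally, that the unique point of $\Psi$ is an extreme point of $F$: if it were a proper convex combination $\lambda y^1+(1-\lambda)y^2$ of distinct points of $F$, strict quasiconvexity would give a value strictly below the minimum at one of $y^1,y^2$, again impossible. The main obstacle I anticipate is bookkeeping around the domain: making sure that "the minimum over $F$" genuinely coincides with a minimum of a finite-valued continuous function over a convex set so that the strict-quasiconvexity arguments bite, and handling the degenerate case $\Psi=\emptyset$ (in which all assertions are vacuous) cleanly; once that is in place, each sub-argument is a short standard convexity computation.
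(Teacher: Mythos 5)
Your overall strategy is the same as the paper's: parts (a)--(c) via Lemma \ref{l-Eff-prop}(c), $D$-monotonicity from Lemma \ref{t251ua}(h) and Proposition \ref{p-mon-scal}(b), part (d) via strict quasiconvexity from Lemma \ref{prop-funcII}(b), and parts (e)--(f) via strict monotonicity and Proposition \ref{p-mon-scal}(c). Parts (a)--(d) are essentially fine (your unrolled argument for (b) is just Proposition \ref{p-mon-scal}(b) combined with Lemma \ref{hab-l421}(b), which is exactly what the paper does). However, two of your justifications do not hold up.

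In (e), your claim that finite-valuedness of $\varphi_{a-H,k}$ is forced is a non-sequitur: $\operatorname*{core}H\neq\emptyset$ does not imply $\operatorname*{core}0^+H\neq\emptyset$ (take $H$ a ball plus a single ray), and nothing in the hypothesis $H+(D\setminus\{0\})\subseteq\operatorname*{core}H$ places $k$ in $\operatorname*{core}0^+H$. The paper instead splits cases: it disposes of the non-finite-valued case separately and applies Lemma \ref{t251ua}(i) (which only requires finite-valuedness \emph{on} the relevant set $F\cap\operatorname*{dom}\varphi_{a-H,k}$) in the remaining case. Your main line survives with that repair, but as written the step is wrong. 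More seriously, (f) is left genuinely incomplete. You correctly notice that mere $(\operatorname*{int}D)$-monotonicity plus Proposition \ref{p-mon-wscal}(b) does not yield $\Psi\subseteq\operatorname*{WEff}(F,D)$, but your fallback (``$D$-monotone plus minimality of the value already places the minimizer in $\operatorname*{WEff}$'') is false, and ``checking strict monotonicity via a boundary argument'' is not an argument. The missing observation is that $H+\operatorname*{int}D=\bigcup_{h\in H}(h+\operatorname*{int}D)$ is open, so the hypothesis $H+\operatorname*{int}D\subseteq H$ automatically upgrades to $H+\operatorname*{int}D\subseteq\operatorname*{int}H\subseteq\operatorname*{core}H$; then (f) is literally (e) with $D$ replaced by $\operatorname*{int}D$, since $\operatorname*{WEff}(F,D)=\operatorname*{Eff}(F,\operatorname*{int}D)$. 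That one-line openness remark is the whole content of (f), and it is absent from your proposal.
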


\begin{proof}
\begin{itemize}
\item[]
\item[(a)] follows from Lemma \ref{l-Eff-prop}(c).
\item[(b)]$H+D\subseteq H\Rightarrow \varphi _{-H,k}$ is $D$-monotone by Lemma \ref{t251ua}. $\Rightarrow \varphi _{a-H,k}$ is $D$-monotone because of Lemma \ref{A-shift}. $\Rightarrow \operatorname*{Eff}(F\cap\operatorname*{dom}\varphi _{a-H,k},D)\cap \Psi= \operatorname*{Eff}(\Psi,D)$ by Proposition
\ref{p-mon-scal}(b). This results in the assertion by Lemma \ref{hab-l421}(b).
\item[(c)] follows immediately from (b).
\item[(d)] $\varphi _{a-H,k}$ is strictly quasiconvex by Lemma \ref{prop-funcII}.
This implies the assertion. 
\item[(e)] If $\varphi _{a-H,k}$ is not finite-valued on $F\cap\operatorname*{dom}\varphi _{a-H,k}$, the assertion is fulfilled. Assume now that $\varphi _{a-H,k}$ is finite-valued on $F\cap\operatorname*{dom}\varphi _{a-H,k}$ and $H+(D\setminus\{0\})\subseteq \operatorname*{core}H$. Then $\varphi _{a-H,k}$ is strictly $D$-monotone on $F\cap\operatorname*{dom}\varphi _{a-H,k}$ by Lemma \ref{t251ua}. $\Rightarrow \Psi\subseteq \operatorname*{Eff}(F\cap\operatorname*{dom}\varphi _{a-H,k},D)$ by Proposition \ref{p-mon-scal}(c). The assertion follows by Lemma \ref{hab-l421}(b).
\item[(f)] Since $H+\operatorname*{int}D\subseteq \operatorname*{int}H\subseteq \operatorname*{core}H$, (f) results from (e) with $D$ replaced by $\operatorname*{int}D$.
\end{itemize}
\end{proof}

\begin{example}
$Y=\mathbb{R}^2$, $H=D=\mathbb{R}_+^2+(1,1)^T$ and $k=(1,1)^T$ fulfill the assumptions for $H$ and $k$ in (H1--VOP$_{H,k}$) and $H+D\subseteq H$, though $D$ is not a convex cone and $0\notin \operatorname*{bd}D$.
\end{example}

\begin{corollary}\label{c-hab-t421}
Suppose that $Y$ is a topological vector space, $F\subset Y$, $a\in Y$, that $D$ is a non-trivial closed convex cone in $Y$, $k\in \operatorname*{int}D$. Define 
$$\Psi:=\operatorname*{argmin}_{y\in F}\varphi _{a-D,k}(y).$$
Then $(H2_{-D,k})$ and $(H2_{a-D,k})$ hold and:
\begin{itemize}
\item[(a)] $\Psi\subseteq \operatorname*{WEff}(F,D)$.
\item[(b)] $\operatorname*{Eff}(F,D)\cap \Psi= \operatorname*{Eff}(\Psi,D)$.
\item[(c)] $\Psi=\{ y^0\} \implies y^0\in\operatorname*{Eff}(F,D)$.
\end{itemize}
\end{corollary}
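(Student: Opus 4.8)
The plan is to obtain the Corollary as a direct specialization of Theorem \ref{hab-t421} to the case $H:=D$, after checking that the hypotheses of that theorem and of Lemma \ref{c251a-C} are met. So first I would verify (H1--VOP$_{D,k}$): $D$ is closed by assumption and is a proper subset of $Y$ because $D$ is non-trivial, so it remains to place $k$ correctly. Since $D$ is a convex cone it contains the origin, hence $0^+D=D$, and therefore $k\in\operatorname*{int}D=\operatorname*{int}0^+D\subseteq 0^+D$. To see $k\neq 0$ I would argue that $0\in\operatorname*{int}D$ is impossible: if a neighbourhood of $0$ were contained in $D$, then, $D$ being a cone, suitable positive multiples of that neighbourhood would cover all of $Y$, contradicting $D\neq Y$; thus $0\notin\operatorname*{int}D$ and $k\neq 0$. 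This shows (H1--VOP$_{D,k}$), and in particular $(H1_{-D,k})$ and $(H1_{a-D,k})$, hold.

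Next I would invoke Lemma \ref{c251a-C}(d) (with $H=D$, once for $a=0$ and once for the given $a$): since $k\in\operatorname*{int}D$ and $D$ is a non-trivial closed convex cone, it yields that $(H2_{-D,k})$ and $(H2_{a-D,k})$ are satisfied — which is precisely the first claim of the Corollary — and moreover that $\varphi_{a-D,k}$ is continuous, finite-valued and strictly $(\operatorname*{int}D)$-monotone.

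Then I would record the two set inclusions that feed the relevant parts of Theorem \ref{hab-t421}: because $D$ is a convex cone, $D+D\subseteq D$, and hence $H+D=D+D\subseteq D=H$ as well as $H+\operatorname*{int}D=D+\operatorname*{int}D\subseteq D+D\subseteq D=H$. With these in place, parts (a), (b), (c) of the Corollary are exactly parts (f), (b), (c) of Theorem \ref{hab-t421} applied with $H=D$: part (f) gives $\Psi\subseteq\operatorname*{WEff}(F,D)$; part (b) gives $\operatorname*{Eff}(F,D)\cap\Psi=\operatorname*{Eff}(\Psi,D)$; and part (c) gives that $\Psi=\{y^0\}$ forces $y^0\in\operatorname*{Eff}(F,D)$.

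I do not expect any genuinely hard step here; the proof is essentially bookkeeping plus two citations. The only points deserving a line of care are the verification that $k\neq 0$, where the non-triviality of $D$ is actually used, and the identification $0^+D=D$ for a convex cone containing the origin; both are short. Note also that one cannot shortcut part (a) through Theorem \ref{hab-t421}(e), since $D+(D\setminus\{0\})\subseteq\operatorname*{core}D$ fails in general for cones, which is why the argument runs through part (f) with $\operatorname*{int}D$ in the role of the strictly monotone direction.
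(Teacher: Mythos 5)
Your proposal is correct and follows exactly the route the paper intends: the corollary is stated without a separate proof precisely because it is the specialization $H=D$ of Theorem \ref{hab-t421} (parts (f), (b), (c)), with $(H2_{-D,k})$ and $(H2_{a-D,k})$ supplied by Lemma \ref{c251a-C}(d). Your auxiliary verifications ($0^+D=D$, $k\neq 0$ via $0\notin\operatorname*{int}D$, and $D+D\subseteq D$, $D+\operatorname*{int}D\subseteq D$) are exactly the routine bookkeeping needed, so nothing is missing.
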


We will now characterize the efficient point set and the weakly efficient point set by minimal solutions of functionals
$\varphi _{a-D,k}$. The following two theorems deliver necessary conditions for weakly efficient and for efficient elements.

\begin{theorem}\label{hab-p431a}
Suppose (H1--VOP$_{D,k}$).\\ 
Then $(H1_{-D,k})$ and for each $y^0\in Y$ also $(H1_{y^0-D,k})$ are fulfilled, and
\begin{eqnarray*}
\operatorname*{Eff}(F,D) & = & \{y^0\in F\mid \forall y\in (F\cap\operatorname*{dom}\varphi _{y^0-D,k})\setminus\{y^0\}\colon\varphi _{y^0-D,k}(y)>0\}\\
 & = & \{y^0\in F\mid \forall y\in (F\cap\operatorname*{dom}\varphi _{y^0-D,k})\setminus\{y^0\}\colon\varphi _{-D,k}(y-y^0)>0\}.
\end{eqnarray*} 
If $0\in D$, then $\varphi _{y^0-D,k}(y^0)=\varphi _{-D,k}(y^0-y^0)\leq 0$ for each $y^0\in Y$.\\
If $D+\mathbb{R}_>k\subseteq\operatorname*{int}D$ and $0\in \operatorname*{bd}D$, then\\
$\varphi _{y^0-D,k}(y^0)=\varphi _{-D,k}(y^0-y^0)=0$ for each $y^0\in Y$ and\\
$\operatorname*{Eff}(F,D) = \{y^0\in F\mid  \forall y\in (F\cap\operatorname*{dom}\varphi _{y^0-D,k})\setminus\{y^0\}\colon\varphi _{y^0-D,k}(y^0) < \varphi _{y^0-D,k}(y)\},$\\
$\operatorname*{Eff}(F,D) = \{y^0\in F\mid  \forall y\in (F\cap\operatorname*{dom}\varphi _{y^0-D,k})\setminus\{y^0\}\colon\varphi _{-D,k}(y^0-y^0) < \varphi _{-D,k}(y-y^0)\}.$
\end{theorem}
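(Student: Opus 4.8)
The plan is to reduce everything to two facts about $\varphi_{a-H,k}$ recorded in Section~\ref{s-uni}: the sublevel-set identity of Lemma~\ref{t251ua}(a), namely $\{y\in Y\mid\varphi_{a-H,k}(y)\le t\}=a-H+tk$, and the translation formula of Lemma~\ref{A-shift}, namely $\varphi_{a-H,k}(y)=\varphi_{-H,k}(y-a)$ together with $\operatorname*{dom}\varphi_{a-H,k}=a+\operatorname*{dom}\varphi_{-H,k}$. First I would note that $(H1_{-D,k})$ is just $(H1_{a-H,k})$ with $a=0$, $H=D$, hence holds under (H1--VOP$_{D,k}$), and that $(H1_{y^0-D,k})$ for every $y^0\in Y$ then follows from Lemma~\ref{A-shift}. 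Applying that same lemma with $a=y^0$, $H=D$ shows $\varphi_{y^0-D,k}(y)=\varphi_{-D,k}(y-y^0)$ for all $y\in Y$ and $\operatorname*{dom}\varphi_{y^0-D,k}=y^0+\operatorname*{dom}\varphi_{-D,k}$, so the second claimed description of $\operatorname*{Eff}(F,D)$ is merely a rewriting of the first; it suffices to prove the first.

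For the first equality I would use Lemma~\ref{t251ua}(a) with $a=y^0$, $H=D$, $t=0$ to get $\{y\in Y\mid\varphi_{y^0-D,k}(y)\le 0\}=y^0-D$, and Lemma~\ref{t251ua} itself to get $\operatorname*{dom}\varphi_{y^0-D,k}=y^0-D+\mathbb{R}k\supseteq y^0-D$. Since for $y\in\operatorname*{dom}\varphi_{y^0-D,k}$ the value $\varphi_{y^0-D,k}(y)$ lies in $\mathbb{R}\cup\{-\infty\}$, the inequality $\varphi_{y^0-D,k}(y)>0$ is exactly the negation of $\varphi_{y^0-D,k}(y)\le 0$, i.e.\ of $y\in y^0-D$. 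Because $y^0-D\subseteq\operatorname*{dom}\varphi_{y^0-D,k}$, one has $F\cap(y^0-D)=F\cap\operatorname*{dom}\varphi_{y^0-D,k}\cap(y^0-D)$, so the condition ``$\varphi_{y^0-D,k}(y)>0$ for all $y\in(F\cap\operatorname*{dom}\varphi_{y^0-D,k})\setminus\{y^0\}$'' is equivalent to ``$F\cap(y^0-D)\subseteq\{y^0\}$'', which is precisely $y^0\in\operatorname*{Eff}(F,D)$.

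It remains to evaluate $\varphi_{y^0-D,k}(y^0)=\varphi_{-D,k}(0)$. If $0\in D$, then $0\in -D$, and Lemma~\ref{t251ua}(a) with $t=0$ gives $\varphi_{-D,k}(0)\le 0$. If moreover $D+\mathbb{R}_>k\subseteq\operatorname*{int}D$, then $(H2_{-D,k})$ holds, so Lemma~\ref{prop-funcII} yields $\{y\in Y\mid\varphi_{-D,k}(y)=0\}=-\operatorname*{bd}D$; since $0\in\operatorname*{bd}D$ this forces $\varphi_{-D,k}(0)=0$. Substituting $\varphi_{y^0-D,k}(y^0)=0$ (equivalently $\varphi_{-D,k}(y^0-y^0)=0$) into the first equality replaces ``$\varphi_{y^0-D,k}(y)>0$'' by ``$\varphi_{y^0-D,k}(y^0)<\varphi_{y^0-D,k}(y)$'', giving the last two displayed characterizations. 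I do not anticipate a genuine obstacle; the only point needing care is the possibility that $\varphi_{y^0-D,k}$ takes the value $-\infty$ on its domain, which is why one must record explicitly that $y^0-D\subseteq\operatorname*{dom}\varphi_{y^0-D,k}$, so that intersecting with the domain discards no point of $F$ that dominates $y^0$.
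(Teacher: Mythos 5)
Your proposal is correct and follows essentially the same route as the paper's own proof: the translation identity of Lemma~\ref{A-shift}, the sublevel-set identity $\{y\mid\varphi_{y^0-D,k}(y)\le 0\}=y^0-D$ from Lemma~\ref{t251ua}(a) together with $y^0-D\subseteq\operatorname*{dom}\varphi_{y^0-D,k}$, and the level-set identity of Lemma~\ref{prop-funcII} under $(H2_{-D,k})$ for the case $0\in\operatorname*{bd}D$. Your version merely spells out the details (in particular why intersecting with the domain loses no dominating point) that the paper leaves implicit.
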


\begin{proof}
For the relationship between $\varphi _{-D,k}$ and $\varphi _{y^0-D,k}$, see Lemma \ref{A-shift}.\\
Consider some arbitrary $y^0\in F$. $y^0-D\subseteq \operatorname*{dom}\varphi _{y^0-D,k}$.\\
$y^0\in\operatorname*{Eff}(F,D)\Leftrightarrow F\cap (y^0-D)\subseteq\{y^0\}\Leftrightarrow\varphi _{y^0-D,k}(y)>0\mbox{ for all } y\in (F\cap\operatorname*{dom}\varphi _{y^0-D,k})\setminus\{y^0\}$.\\ 
$0\in D\Rightarrow y^0\in y^0-D.\Rightarrow \varphi _{y^0-D,k}(y^0)\leq 0$.\\
$D+\mathbb{R}_>k\subset\operatorname*{int}D$ implies $(H2_{y^0-D,k})$ and $(H2_{-D,k})$, and this yields the assertion.
\end{proof}

$\varphi _{-D,k}$ and $\varphi _{y^0-D,k}$ in Theorem \ref{hab-p431a} are lower semicontinuous and even continuous in the case $D+\mathbb{R}_>k\subset\operatorname*{int}D$. Moreover, because of Lemma \ref{t251ua}, the functionals $\varphi _{y^0-D,k}$ in Theorem \ref{hab-p431a} and in the following theorem are convex if and only if $D$ is a convex set, and the functionals $\varphi _{-D,k}$ in these theorems are sublinear if and only if $D$ is a convex cone. Note that $\varphi _{-D,k}$ and each $\varphi _{y^0-D,k}$ are finite-valued if $k\in\operatorname*{core}0^+D$.

In Theorem \ref{hab-p431a}, efficient elements $y^0$ are described as unique minimizers of $\varphi _{y^0-D,k}$. Without the uniqueness, we get weakly efficient points.

\begin{theorem}\label{hab-p431b}
Suppose (H1--VOP$_{D,k}$) and $D+\mathbb{R}_>k\subseteq\operatorname*{int}D$.\\ 
Then $(H2_{-D,k})$ and for each $y^0\in Y$ also $(H2_{y^0-D,k})$ are fulfilled, and
\begin{eqnarray*}
\operatorname*{WEff}(F,D) & = & \{y^0\in F\mid \forall y\in (F\cap\operatorname*{dom}\varphi _{y^0-D,k})\setminus\{y^0\}\colon\varphi _{y^0-D,k}(y)\geq 0\}\\
 & = & \{y^0\in F\mid \forall y\in (F\cap\operatorname*{dom}\varphi _{y^0-D,k})\setminus\{y^0\}\colon\varphi _{-D,k}(y-y^0)\geq 0\}.
\end{eqnarray*} 
If $0\in D$, then $\varphi _{y^0-D,k}(y^0)=\varphi _{-D,k}(y^0-y^0)\leq 0$ for each $y^0\in Y$.\\
If $0\in\operatorname*{bd}D$, then $\varphi _{y^0-D,k}(y^0)=\varphi _{-D,k}(y^0-y^0)=0$ for each $y^0\in Y$ and
\begin{eqnarray*}
\operatorname*{WEff}(F,D) & = & \{y^0\in F\mid \varphi _{y^0-D,k}(y^0)=\operatorname*{min}_{y\in F} \varphi _{y^0-D,k}(y)\}\\
 & = & \{y^0\in F\mid \varphi _{-D,k}(y^0-y^0)=\operatorname*{min}_{y\in F} \varphi _{-D,k}(y-y^0)\}.
\end{eqnarray*} 
\end{theorem}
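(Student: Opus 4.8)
The plan is to derive everything from the sublevel-set formulas for $\varphi_{y^0-D,k}$ in Lemma~\ref{prop-funcII}, after first verifying the hypotheses and recording the shift relation. By the remark following the supposition (H1--VOP$_{H,k}$), (H1--VOP$_{D,k}$) already gives $(H1_{-D,k})$ and $(H1_{y^0-D,k})$ for every $y^0\in Y$; adjoining the standing hypothesis $D+\mathbb{R}_>k\subseteq\operatorname*{int}D$ turns these into $(H2_{-D,k})$ and $(H2_{y^0-D,k})$ by their definition. From Lemma~\ref{A-shift} one has $\operatorname*{dom}\varphi_{y^0-D,k}=y^0+\operatorname*{dom}\varphi_{-D,k}$ and $\varphi_{y^0-D,k}(y)=\varphi_{-D,k}(y-y^0)$, so every assertion in its $\varphi_{y^0-D,k}$-form is equivalent to its $\varphi_{-D,k}$-form; I would prove the $\varphi_{y^0-D,k}$-versions and invoke Lemma~\ref{A-shift} for the remaining ones.

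For the characterization of $\operatorname*{WEff}(F,D)$, fix $y^0\in F$ and apply Lemma~\ref{prop-funcII} with $a=y^0$, $H=D$ and $t=0$ to get $\{y\in Y\mid\varphi_{y^0-D,k}(y)<0\}=y^0-\operatorname*{int}D$. Then $y^0\in\operatorname*{WEff}(F,D)=\operatorname*{Eff}(F,\operatorname*{int}D)$ means $F\cap(y^0-\operatorname*{int}D)\subseteq\{y^0\}$, i.e.\ no $y\in F\setminus\{y^0\}$ satisfies $\varphi_{y^0-D,k}(y)<0$. Since $\{y\mid\varphi_{y^0-D,k}(y)<0\}\subseteq\operatorname*{dom}\varphi_{y^0-D,k}$, this is the same as requiring $\varphi_{y^0-D,k}(y)\ge 0$ for every $y\in(F\cap\operatorname*{dom}\varphi_{y^0-D,k})\setminus\{y^0\}$, which is the claimed first equality; the second then follows from Lemma~\ref{A-shift}.

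Next I would pin down the value at $y^0$. If $0\in D$, then $y^0\in y^0-D=\{y\mid\varphi_{y^0-D,k}(y)\le 0\}$ by Lemma~\ref{t251ua}(a) with $t=0$, so $\varphi_{y^0-D,k}(y^0)\le 0$, and $\varphi_{y^0-D,k}(y^0)=\varphi_{-D,k}(0)$ by Lemma~\ref{A-shift}. If moreover $0\in\operatorname*{bd}D$, then $0\notin\operatorname*{int}D$, hence $y^0\notin y^0-\operatorname*{int}D=\{y\mid\varphi_{y^0-D,k}(y)<0\}$, so $\varphi_{y^0-D,k}(y^0)\ge 0$; since $D$ is closed, $0\in\operatorname*{bd}D$ forces $0\in D$, and the two inequalities give $\varphi_{y^0-D,k}(y^0)=0$. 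Finally, using $\varphi_{y^0-D,k}(y^0)=0$ together with $y^0\in F\cap\operatorname*{dom}\varphi_{y^0-D,k}$, the requirement that $\varphi_{y^0-D,k}(y)\ge 0$ on $(F\cap\operatorname*{dom}\varphi_{y^0-D,k})\setminus\{y^0\}$ is precisely the statement that $\varphi_{y^0-D,k}$ attains its minimum over $F$ at $y^0$; one further application of Lemma~\ref{A-shift} yields the $\varphi_{-D,k}$-form.

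I do not expect a serious obstacle; the only delicate point is the bookkeeping for points of $F$ lying outside $\operatorname*{dom}\varphi_{y^0-D,k}$ and for the symbolic value $\nu$. At such points $\varphi_{y^0-D,k}$ equals $+\infty$ or $\nu$, and in particular is never $<0$, so these points are irrelevant both for the weak-efficiency test and for the claim that the minimum over $F$ is attained at $y^0$; this is the one place I would make explicit rather than leave to the reader.
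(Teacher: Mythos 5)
Your proposal is correct and follows essentially the same route as the paper: it verifies $(H2_{-D,k})$ and $(H2_{y^0-D,k})$ directly from the definition, uses Lemma~\ref{A-shift} to pass between $\varphi_{y^0-D,k}$ and $\varphi_{-D,k}$, and derives the equivalence $y^0\in\operatorname*{WEff}(F,D)\Leftrightarrow F\cap(y^0-\operatorname*{int}D)\subseteq\{y^0\}\Leftrightarrow\varphi_{y^0-D,k}(y)\geq 0$ on $(F\cap\operatorname*{dom}\varphi_{y^0-D,k})\setminus\{y^0\}$ from the strict sublevel-set formula of Lemma~\ref{prop-funcII}, exactly as the paper does. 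Your extra care about points of $F$ outside $\operatorname*{dom}\varphi_{y^0-D,k}$ and about deducing $\varphi_{y^0-D,k}(y^0)=0$ from $0\in\operatorname*{bd}D$ only makes explicit steps the paper leaves implicit.
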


\begin{proof}
For the relation between $\varphi _{-D,k}$ and $\varphi _{y^0-D,k}$, see Lemma \ref{A-shift}.\\
Consider some arbitrary $y^0\in F$. $y^0-D\subseteq \operatorname*{dom}\varphi _{y^0-D,k}$.\\
$y^0\in\operatorname*{WEff}(F,D)\Leftrightarrow F\cap (y^0-\operatorname*{int}D)\subseteq\{y^0\}\Leftrightarrow\varphi _{y^0-D,k}(y)\geq 0\mbox{ for all } y\in (F\cap\operatorname*{dom}\varphi _{y^0-D,k})\setminus\{y^0\}$.\\ 
$0\in D\Rightarrow y^0\in y^0-D.\Rightarrow \varphi _{y^0-D,k}(y^0)\leq 0$.\\
If $0\in\operatorname*{bd}D$, then $\varphi _{y^0-D,k}(y^0)=0$.
\end{proof}

\begin{remark}
The equation\\ 
$\operatorname*{WEff}(F,D) = \{y^0\in F\mid \varphi _{y^0-D,k}(y^0)=\operatorname*{min}_{y\in F} \varphi _{y^0-D,k}(y)\}$\\
was proved under a stronger condition which is equivalent to $k\in\operatorname*{int}0^+D$ for the case $0\in
D\setminus\operatorname*{int}D$ in \cite[Theorem 3.1]{GerWei90}. Under these assumptions, $\varphi _{y^0-D,k}$ is finite-valued.

For $Y=\mathbb{R}^\ell$ and $D=\mathbb{R} _{+}^{\ell}$, $\varphi _{y^0-\mathbb{R} _{+}^{\ell},k}$ had already been used by Bernau \cite{bern87} and by Brosowski \cite{bros87} for deriving scalarization results for the weakly efficient point set, where Bernau assumed $k\in \operatorname*{int}\mathbb{R}^{\ell}_+$ and Brosowski used $k=(1,\ldots ,1)^T$.
\end{remark}

Up to now, we have used functionals $\varphi _{y^0-D,k}$ for scalarizing the weakly efficient point set and the efficient point set, where $y^0$ was the (weakly) efficient element. We now turn to scalarization by functions $\varphi _{a-D,k}$, where $a$ is a fixed vector. In this case, $a$ can be a lower or an upper bound of $F$ and $D$ has to be a closed convex cone. In applications, an upper bound can easily be added to the vector minimization problem without any influence on the set of solutions. Note that scalarizations which are based on norms require a lower bound  of $F$. 

\begin{theorem}\label{hab-p432}
Suppose that $Y$ is a topological vector space and $D\subset Y$ a non-trivial closed convex cone with $\operatorname*{int}D\not=\emptyset$.\\
If $F\subseteq a-\operatorname*{int}D$ or $F\subseteq a+\operatorname*{int}D$ for some $a\in Y$, then
$$ \operatorname*{WEff}(F,D)=\{y^0\in F\mid \exists k\in\operatorname*{int}D:\; \varphi_{a-D,k}(y^0)=\min_{y\in F}\varphi_{a-D,k}(y)\} \quad\mbox{ and }$$
$$ \operatorname*{Eff}(F,D)=\{y^0\in F\mid \exists k\in\operatorname*{int}D\;\forall y\in F\setminus\{y^0\}:\; \varphi_{a-D,k}(y^0)<\varphi_{a-D,k}(y)\}.$$
For $y^0\in \operatorname*{WEff}(F,D)$, in the case $F\subseteq a-\operatorname*{int}D$ one can choose $k=a-y^0$, which results in 
$\varphi_{a-D,k}(y^0)=-1$, whereas in the case $F\subseteq a+\operatorname*{int}D$ one can choose $k=y^0-a$, which results in 
$\varphi_{a-D,k}(y^0)=1$.
\end{theorem}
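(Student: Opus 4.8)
The plan is to derive both set identities from the scalarization results already established for the functionals $\varphi_{y^0-D,k}$ (Theorems \ref{hab-p431a} and \ref{hab-p431b}) by choosing the direction $k$ so that the \emph{fixed} reference point $a$ comes to play the role of the \emph{variable} reference point $y^0$ appearing in those theorems.

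For the inclusions $\supseteq$ (the sufficient-condition direction) there is essentially nothing to do beyond invoking Corollary \ref{c-hab-t421}. If $y^0\in F$ and $k\in\operatorname*{int}D$ are such that $y^0$ minimizes $\varphi_{a-D,k}$ over $F$, then $y^0\in\operatorname*{argmin}_{y\in F}\varphi_{a-D,k}(y)\subseteq\operatorname*{WEff}(F,D)$ by part (a); and if $y^0$ is the strict minimizer, so that $\operatorname*{argmin}_{y\in F}\varphi_{a-D,k}(y)=\{y^0\}$, then $y^0\in\operatorname*{Eff}(F,D)$ by part (c). The hypotheses of that corollary ($D$ a non-trivial closed convex cone, $k\in\operatorname*{int}D$) are exactly what we are given.

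For the inclusions $\subseteq$, together with the explicit choice of $k$ and the value $\varphi_{a-D,k}(y^0)=\mp1$, fix $y^0\in\operatorname*{WEff}(F,D)$ (resp. $y^0\in\operatorname*{Eff}(F,D)$). In the case $F\subseteq a-\operatorname*{int}D$ set $k:=a-y^0$; in the case $F\subseteq a+\operatorname*{int}D$ set $k:=y^0-a$. Since $y^0\in F$, in either case $k\in\operatorname*{int}D\setminus\{0\}\subseteq 0^+D\setminus\{0\}$, so that (H1--VOP$_{D,k}$) holds, $(H2_{y^0-D,k})$ holds, and $\varphi_{y^0-D,k}$ is continuous and finite-valued by Lemma \ref{c251a-C}(d); moreover $D$ being non-trivial forces $0\notin\operatorname*{int}D$, hence $0\in\operatorname*{bd}D$, and $D+\mathbb{R}_>k\subseteq\operatorname*{int}D$ is part of $(H2_{y^0-D,k})$. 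Now $a=y^0+k$ (resp. $a=y^0-k$), so combining $\varphi_{a-D,k}(y)=\varphi_{-D,k}(y-a)$ and $\varphi_{y^0-D,k}(y)=\varphi_{-D,k}(y-y^0)$ from Lemma \ref{A-shift} with the translation identity $\varphi_{-D,k}(w\mp k)=\varphi_{-D,k}(w)\mp1$ — which is immediate from the sublevel-set formula $\{\,\varphi_{-D,k}\le t\,\}=-D+tk$ of Lemma \ref{t251ua}(a) — one obtains
\[
\varphi_{a-D,k}(y)=\varphi_{y^0-D,k}(y)\mp1\qquad\text{for all }y\in Y .
\]
By Theorems \ref{hab-p431a} and \ref{hab-p431b}, $\varphi_{y^0-D,k}(y^0)=0$, whence $\varphi_{a-D,k}(y^0)=\mp1$, the stated value; and those theorems characterize $\operatorname*{WEff}(F,D)$ (resp. $\operatorname*{Eff}(F,D)$) precisely as the set of points of $F$ at which $\varphi_{y^0-D,k}$ attains its minimum over $F$ (resp. is the strict minimizer over $F$, using $\operatorname*{dom}\varphi_{y^0-D,k}=Y$). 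Since the displayed identity shifts $\varphi_{y^0-D,k}$ by a constant, ``$y^0$ minimizes $\varphi_{y^0-D,k}$ over $F$'' is equivalent to ``$\varphi_{a-D,k}(y^0)=\min_{y\in F}\varphi_{a-D,k}(y)$'', and similarly for the strict version; this exhibits the required $k$ and yields the inclusions $\subseteq$.

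The only genuinely non-routine step is recognizing that the correct direction is $k=a-y^0$ (resp. $y^0-a$): this is simultaneously the choice that forces $k\in\operatorname*{int}D$ — and hence that $\varphi_{a-D,k}$ is continuous and finite-valued — and the choice that makes $\varphi_{a-D,k}$ and $\varphi_{y^0-D,k}$ differ by the constant $1$, so that the fixed-reference-point scalarization collapses onto the variable-reference-point characterizations of Theorems \ref{hab-p431a} and \ref{hab-p431b}. Everything else (the translation identity for $\varphi_{-D,k}$ and the verification of $(H2_{y^0-D,k})$, $0\in\operatorname*{bd}D$, and $\operatorname*{dom}\varphi_{y^0-D,k}=Y$) is routine and supplied directly by the cited lemmas.
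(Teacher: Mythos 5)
Your proof is correct. The sufficiency direction ($\supseteq$) is handled exactly as in the paper, via Corollary \ref{c-hab-t421}, and the crucial choice $k=a-y^0$ (resp.\ $k=y^0-a$) for the necessity direction is also the same. Where you diverge is in how the necessity inclusion is finished: the paper computes the level and strict sublevel sets of $\varphi_{a-D,k}$ directly from Lemma \ref{prop-funcII} (namely $\varphi_{a-D,k}(y)<-1\iff y\in a-\operatorname*{int}D-k=y^0-\operatorname*{int}D$ and $\varphi_{a-D,k}(y)\le -1\iff y\in y^0-D$) and then reads off weak efficiency and efficiency from the definitions, whereas you establish the affine relation $\varphi_{a-D,k}=\varphi_{y^0-D,k}\mp 1$ via Lemma \ref{A-shift} and the translation identity from Lemma \ref{t251ua}(a), and then import the variable-reference-point characterizations of Theorems \ref{hab-p431a} and \ref{hab-p431b}. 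Your route buys explicit reuse of the earlier theorems and makes transparent why the fixed-reference-point scalarization collapses onto the moving-reference-point one; the price is that you must verify their hypotheses ($D+\mathbb{R}_>k\subseteq\operatorname*{int}D$, $0\in\operatorname*{bd}D$, finite-valuedness so that $\operatorname*{dom}\varphi_{y^0-D,k}=Y$), which you do correctly. The paper's direct computation is marginally more self-contained but proves nothing you have not also obtained. Both arguments cover the two geometric cases symmetrically, so the proofs are of essentially equal strength.
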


\begin{proof}
Because of Corollary \ref{c-hab-t421}, we have only to show the inclusions $\subseteq$ of the equations.\\
Assume $F\subseteq a-\operatorname*{int}D$ and consider some $y^0\in \operatorname*{WEff}(F,D)$.\\
$\Rightarrow y^0\in F\subseteq a-\operatorname*{int}D$. $\Rightarrow k:=a-y^0\in \operatorname*{int}D$.\\
$\varphi_{a-D,k}(y)=-1\Leftrightarrow y\in a-\operatorname*{bd}D-k=y^0-\operatorname*{bd}D.$
Thus, $0\in \operatorname*{bd}D$ implies $\varphi_{a-D,k}(y^0)=-1$.\\
$\varphi_{a-D,k}(y)<-1\Leftrightarrow y\in a-\operatorname*{int}D-k=y^0-\operatorname*{int}D.$\\
$F\cap (y^0-\operatorname*{int}D)\subseteq\{y^0\}$ implies $\varphi_{a-D,k}(y)\not< -1\mbox{ for all } y\in F\setminus\{y^0\}$, thus
$\varphi_{a-D,k}(y^0)=\min_{y\in F}\varphi_{a-D,k}(y)$.\\
For $y^0\in \operatorname*{Eff}(F,D)$, $F\cap (y^0-D)=\{y^0\}$ and hence $\varphi_{a-D,k}(y)\not\leq -1\mbox{ for all } y\in F\setminus\{y^0\}$.\\
The case $F\subseteq a+\operatorname*{int}D$ can be handled in an analogous way. 
\end{proof}

\begin{remark}
The first equation in Theorem \ref{hab-p432} was also formulated by Luc \cite[p.95]{Luc89b} under the assumption $F\subseteq a-\operatorname*{int}D$.
\end{remark}

\begin{theorem}\label{hab-p433a}
Suppose that $Y$ is a topological vector space, $D\subset Y$ a non-trivial closed convex cone, that $F\subset Y$ contains more than one element and that
$$F\subseteq a-D$$
holds for some $a\in Y$. Then
$$ \operatorname*{Eff}(F,D)=\{y^0\in F\mid \exists k\in D\setminus (-D)\;\forall y\in F\setminus\{y^0\}:\; \varphi_{a-D,k}(y^0)<\varphi_{a-D,k}(y)\}.$$
For $y^0\in \operatorname*{Eff}(F,D)$, one can choose $k=a-y^0$, which results in\\ 
$\varphi_{a-D,k}(y^0)\leq -1<\varphi_{a-D,k}(y)\mbox{ for all } y\in F\setminus\{y^0\}$, where $\varphi_{a-D,k}$ is proper.
\end{theorem}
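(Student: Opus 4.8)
The statement has two parts: the set equality, and the concrete recipe $k = a - y^0$ with its consequences. I would prove the recipe first and derive the "$\subseteq$" inclusion of the equality from it; the "$\supseteq$" inclusion should come essentially for free from the sufficient-condition results already established. The idea is the same as in Theorem~\ref{hab-p432}, but with $D$ in place of $\operatorname*{int}D$, so one works with $\varphi_{a-D,k}(y) \le -1$ sublevel sets rather than strict ones, and one must be careful because $\varphi_{a-D,k}$ need no longer be finite-valued.

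\textbf{Step 1: Setup and the choice of $k$.} Fix $y^0 \in \operatorname*{Eff}(F,D)$. Since $F \subseteq a - D$ and $y^0 \in F$, we have $y^0 \in a - D$, i.e. $k := a - y^0 \in D$. I must check $k \in D \setminus (-D)$: if $k \in -D$ as well, then $a - y^0 \in D \cap (-D)$; combined with $a - y \in D$ for all $y \in F$ this would force $y^0 - y = (a-y) - (a-y^0) \in D - (D\cap(-D)) \subseteq D$ for every $y \in F$, so $F \subseteq y^0 - D$, and since $y^0 \in \operatorname*{Eff}(F,D)$ this gives $F = \{y^0\}$, contradicting that $F$ has more than one element. (This is where the hypothesis $|F| > 1$ is used; it also lets me invoke Lemma~\ref{c251a-C}(b) to conclude $\varphi_{a-D,k}$ is proper once I know $k \in D \setminus (-D)$, since $a - D$ is a closed convex cone shifted — more precisely I apply Lemma~\ref{c251a-C} with the cone $D$ and the point $a$, noting $\varphi_{a-D,k}$ is proper iff $k \in D\setminus(-D)$.) I expect the verification $k \notin -D$ to be the main obstacle, in the sense that it is the one genuinely new wrinkle compared to the open-cone case.

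\textbf{Step 2: Locating the sublevel sets.} By Lemma~\ref{t251ua}(a), $\{y \mid \varphi_{a-D,k}(y) \le t\} = a - D + tk$ for all $t \in \mathbb{R}$. In particular $\{y \mid \varphi_{a-D,k}(y) \le -1\} = a - D - k = a - D - (a - y^0) = y^0 - D$ (using that $D$ is a convex cone, so $-D - D = -D$). Since $0 \in D$ (a non-trivial cone contains $0$), $y^0 = y^0 - 0 \in y^0 - D$, hence $\varphi_{a-D,k}(y^0) \le -1$. On the other hand, because $y^0 \in \operatorname*{Eff}(F,D)$ we have $F \cap (y^0 - D) = \{y^0\}$, so for every $y \in F \setminus \{y^0\}$ we have $y \notin y^0 - D = \{\,\varphi_{a-D,k} \le -1\,\}$, i.e. $\varphi_{a-D,k}(y) > -1 \ge \varphi_{a-D,k}(y^0)$. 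This proves the displayed inequality $\varphi_{a-D,k}(y^0) \le -1 < \varphi_{a-D,k}(y)$ for all $y \in F \setminus \{y^0\}$, and simultaneously shows $y^0$ lies in the right-hand set of the claimed equality, giving the inclusion $\operatorname*{Eff}(F,D) \subseteq \{\,\cdots\,\}$.

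\textbf{Step 3: The reverse inclusion.} Conversely, suppose $y^0 \in F$ and there exists $k \in D \setminus (-D)$ with $\varphi_{a-D,k}(y^0) < \varphi_{a-D,k}(y)$ for all $y \in F \setminus \{y^0\}$; I want $y^0 \in \operatorname*{Eff}(F,D)$. Here I invoke Theorem~\ref{hab-t421}: with $H := D$ (a non-trivial closed convex cone, so $H + D = D + D = D \subseteq H$, and in fact $H + (D\setminus\{0\}) \subseteq \operatorname*{core}H$ need not hold, so I cannot directly use part (e)) — instead I should use part (b) together with the strict-minimizer observation, or more directly Corollary~\ref{c-hab-t421}-style reasoning. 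Cleaner: the hypothesis says $y^0$ is the unique minimizer of $\varphi_{a-D,k}$ over $F \cap \operatorname*{dom}\varphi_{a-D,k}$ (it is in the domain since $\varphi_{a-D,k}(y^0) \le -1 < \infty$, wait — I only know $\varphi_{a-D,k}(y^0) < \varphi_{a-D,k}(y)$, but if $\varphi_{a-D,k}(y^0) = -\infty$ that is fine too). By Lemma~\ref{t251ua}(h), $H + D \subseteq H$ makes $\varphi_{a-D,k}$ $D$-monotone, so Theorem~\ref{hab-t421}(c) applies once $\Psi = \operatorname*{argmin}_F \varphi_{a-D,k} = \{y^0\}$, yielding $y^0 \in \operatorname*{Eff}(F,D)$. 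I need to confirm $\Psi = \{y^0\}$: the strict inequality over all $y \ne y^0$ with $y$ in the domain gives exactly this, provided $y^0$ attains the minimum, which holds since every competitor is either outside the domain or strictly worse. This closes the proof; the only subtlety is bookkeeping around $-\infty$ values, which the framework of \cite{Wei15} already accommodates.
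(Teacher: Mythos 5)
Your proposal is correct and follows essentially the same route as the paper: choose $k=a-y^0$, use the sublevel-set identity $\{y\in Y\mid\varphi_{a-D,k}(y)\le -1\}=y^0-D$ for the inclusion $\subseteq$ and the displayed inequalities, and obtain $\supseteq$ from Theorem \ref{hab-t421}(c) with $H=D$ together with properness of $\varphi_{a-D,k}$ from Lemma \ref{c251a-C}. The only cosmetic difference is that you verify $k\in D\setminus(-D)$ directly from the assumption that $F$ has more than one element, whereas the paper only notes $k\in D\setminus\{0\}$ and recovers $k\notin -D$ a posteriori from the fact that $\varphi_{a-D,k}$ attains real values (Lemma \ref{c251a-C}(a)).
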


\begin{proof}
$F\subseteq a-D$ implies $F\subseteq\operatorname*{dom}\varphi _{a-D,k}$.\\
$\supseteq$ follows from Theorem \ref{hab-t421}(c) for $H=D$ since $\varphi _{a-D,k}$ is proper for $k\in D\setminus (-D)$ by Lemma \ref{c251a-C}.\\
Assume now $y^0\in \operatorname*{Eff}(F,D)$, i.e., $F\cap (y^0-D)=\{y^0\}$.\\
$F\not=\{a\} \Rightarrow y^0\not= a$. $\Rightarrow k:=a-y^0\in D\setminus\{0\}$.\\
$\varphi_{a-D,k}(y)\leq -1\Leftrightarrow y\in a-D-k=y^0-D.$\\
Hence, $\varphi_{a-D,k}(y^0)\leq -1<\varphi_{a-D,k}(y)\mbox{ for all } y\in F\setminus\{y^0\}$.\\
$\Rightarrow \varphi_{a-D,k}$ attains real values. $\varphi_{a-D,k}$ is proper by Lemma \ref{c251a-C}.
\end{proof}

\begin{theorem}\label{hab-p433b}
Suppose that $Y$ is a topological vector space and $D\subset Y$ a non-trivial pointed closed convex cone and $F\subset Y$ with
$$F\subseteq a+D$$
for some $a\in Y$. Then $\operatorname*{Eff}(F,D)=\{a\}$ in the case $a\in F$, and otherwise
\begin{eqnarray*}
\operatorname*{Eff}(F,D) &= &\{y^0\in F\mid \exists k\in D\setminus\{0\}:\;\;y^0\in\operatorname*{dom}\varphi _{a-D,k}
\mbox{  and }\\
& & \varphi_{a-D,k}(y^0)<\varphi_{a-D,k}(y) \mbox{ for all } y\in (F\cap\operatorname*{dom}\varphi _{a-D,k})\setminus\{y^0\}\}.
\end{eqnarray*}
For $y^0\in \operatorname*{Eff}(F,D)$, one can choose $k=y^0-a$, which results in\\ 
$\varphi_{a-D,k}(y^0)\leq 1<\varphi_{a-D,k}(y)\mbox{ for all } y\in (F\cap\operatorname*{dom}\varphi _{a-D,k})\setminus\{y^0\}$, where $\varphi_{a-D,k}$ is proper.
\end{theorem}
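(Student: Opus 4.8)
The plan is to mirror the structure of Theorem~\ref{hab-p433a}, but with the cone on the ``upper'' side, and to use the pointedness of $D$ to handle the degenerate case $a\in F$. First I would dispose of the case $a\in F$: since $F\subseteq a+D$, every $y\in F$ satisfies $y-a\in D$, i.e.\ $y\in a+D$, which is the same as $a\in y-D$; combined with $a\in F$ this gives $a\in F\cap(y-D)$ for all $y\in F$, so no $y\ne a$ can be efficient, while $a$ itself is efficient because $F\cap(a-D)\subseteq (a+D)\cap(a-D)=a+(D\cap(-D))=\{a\}$ using that $D$ is pointed. Hence $\operatorname*{Eff}(F,D)=\{a\}$ in that case.

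For the main case $a\notin F$, I would prove the two inclusions separately. For ``$\supseteq$'': suppose $y^0\in F$ admits some $k\in D\setminus\{0\}$ with $y^0\in\operatorname*{dom}\varphi_{a-D,k}$ and $\varphi_{a-D,k}(y^0)<\varphi_{a-D,k}(y)$ for all $y\in(F\cap\operatorname*{dom}\varphi_{a-D,k})\setminus\{y^0\}$. By Lemma~\ref{c251a-C} (with $H=D$ a non-trivial closed convex cone and $k\in D\setminus\{0\}$) we have $(H1_{a-D,k})$, and moreover $D+D\subseteq D$, so $H+D\subseteq H$; thus Theorem~\ref{hab-t421}(c) applies once we note $\Psi:=\operatorname*{argmin}_{y\in F}\varphi_{a-D,k}(y)=\{y^0\}$ — which is exactly the hypothesis, since the strict inequality forces the unique minimizer to be $y^0$ and $y^0$ lies in the feasible range $F\cap\operatorname*{dom}\varphi_{a-D,k}$. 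Theorem~\ref{hab-t421}(c) then yields $y^0\in\operatorname*{Eff}(F,D)$.

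For ``$\subseteq$'': take $y^0\in\operatorname*{Eff}(F,D)$, so $F\cap(y^0-D)=\{y^0\}$. Since $a\notin F$ and $y^0\in F$, we have $y^0\ne a$; because $F\subseteq a+D$ we get $k:=y^0-a\in D\setminus\{0\}$. Now I would use Lemma~\ref{t251ua}(a): $\{y\mid \varphi_{a-D,k}(y)\le t\}=a-D+tk$ for $t\in\mathbb{R}$, so in particular $\varphi_{a-D,k}(y)\le 1 \iff y\in a-D+k = y^0-D$ (using $a+k=y^0$). Intersecting with $F$: $F\cap(y^0-D)=\{y^0\}$ means every $y\in F\setminus\{y^0\}$ has $\varphi_{a-D,k}(y)>1$; and $y^0\in y^0-D$ (as $0\in D$) gives $\varphi_{a-D,k}(y^0)\le 1$. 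Hence $\varphi_{a-D,k}(y^0)\le 1<\varphi_{a-D,k}(y)$ for all $y\in F\setminus\{y^0\}$, a fortiori for all such $y$ in $F\cap\operatorname*{dom}\varphi_{a-D,k}$, and $y^0\in\operatorname*{dom}\varphi_{a-D,k}$ since $\varphi_{a-D,k}(y^0)\le 1<+\infty$; finally $\varphi_{a-D,k}$ attains a real value (namely at $y^0$ and at every other point of $F$), so by Lemma~\ref{c251a-C}(b) it is proper. This also establishes the concluding ``one can choose $k=y^0-a$'' assertion.

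The only genuinely delicate point is the bookkeeping around the effective domain: unlike in Theorem~\ref{hab-p433a}, where $F\subseteq a-D\subseteq\operatorname*{dom}\varphi_{a-D,k}$ automatically, here $F\subseteq a+D$ need not lie in $\operatorname*{dom}\varphi_{a-D,k}$, so I must carry the qualifier ``$y\in F\cap\operatorname*{dom}\varphi_{a-D,k}$'' throughout and verify $y^0$ itself is in the domain — which the computation $\varphi_{a-D,k}(y^0)\le 1$ secures. Everything else is a routine translation of sublevel sets via Lemma~\ref{t251ua}(a) and an appeal to the already-proven Theorem~\ref{hab-t421}(c) and Lemma~\ref{c251a-C}.
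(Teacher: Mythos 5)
Your proposal is correct and takes essentially the same route as the paper: the case $a\in F$ handled directly via pointedness, the inclusion $\supseteq$ via Theorem~\ref{hab-t421}(c) with $H=D$, and the inclusion $\subseteq$ by choosing $k=y^0-a$ and translating the sublevel set $\{y\mid\varphi_{a-D,k}(y)\le 1\}=y^0-D$ via Lemma~\ref{t251ua}(a). The only cosmetic quibble is your justification of properness: it follows from $k\in D\setminus(-D)$ (i.e.\ from pointedness) via Lemma~\ref{c251a-C}(b), not from the observation that $\varphi_{a-D,k}$ attains a real value, but this does not affect the argument.
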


\begin{proof}
The case $F=\emptyset$ is trivial. Assume now $F\not=\emptyset$.\\
$\supseteq$ follows from Theorem \ref{hab-t421}(c) for $H=D$ since $\varphi _{a-D,k}$ is proper for $k\in D\setminus (-D)$ by Lemma \ref{c251a-C}.\\
Assume now $y^0\in \operatorname*{Eff}(F,D)$, i.e., $F\cap (y^0-D)=\{y^0\}$.\\
Obviously, $\operatorname*{Eff}(F,D)=\{a\}$ in the case $a\in F$. Suppose now $a\notin F$.\\
$\Rightarrow k:=y^0-a\in D\setminus\{0\}$. $\Rightarrow y^0=a-0+k\in a-D+\mathbb{R}k=\operatorname*{dom}\varphi_{a-D,k}$.\\
$\varphi_{a-D,k}(y)\leq 1\Leftrightarrow y\in a-D+k=y^0-D.$\\
Hence, $\varphi_{a-D,k}(y^0)\leq 1$ and $\varphi_{a-D,k}(y)>1\mbox{ for all } y\in (F\cap\operatorname*{dom}\varphi _{a-D,k})\setminus\{y^0\}$. $\varphi_{a-D,k}$ is proper by Lemma \ref{c251a-C} since $D$ is pointed.
\end{proof}

In Theorem \ref{hab-p433b}, the choice $k=y^0-a$ could result in a function $\varphi_{a-D,k}$ which does not attain real values if $D$ would not be pointed.

\begin{example}
Consider $Y=\mathbb{R}^2$, $F=\{(y_1,y_2)\in Y\mid 1\leq y_2\leq y_1+1\}$. $D:=\{(y_1,y_2)\in Y\mid y_1\geq 0\}$ is a closed convex cone. $F\subseteq a+D$ and $a\notin F$ for $a:=(0,0)^T$.  
$\operatorname*{Eff}(F,D)=(0,1)^T=:y^0$. For $k=y^0-a=(0,1)^T$, we have $k\in D\cap (-D)$. Hence, $\varphi_{a-D,k}$ does not attain real values.
\end{example}

Each weakly efficient point set of $F$ w.r.t. a convex cone $D$ that has a nonempty interior can be presented as the set of minimal solutions of a continuous strictly $(\operatorname*{int}D)$--monotone functional \cite[Satz 4.3.4]{Wei90}. In the convex case, the weakly efficient point set turns out to be the set of maximizers of a convex function.

\begin{proposition}
Assume that $Y$ is a topological vector space, $F\subset Y$, that $D\subset Y$ is a convex cone with $k\in \operatorname*{int}D$ and that $F+D$ is a closed proper subset of $Y$.
Then
\begin{eqnarray*}
\operatorname*{WEff}(F,D) & = & \{y^0\in F\mid \varphi_{F+D,-k}(y^0)=\operatorname*{max}_{y\in F}\varphi_{F+D,-k}(y)=0\}\\
& = & \{y^0\in F\mid \varphi_{B,k}(y^0)=\operatorname*{min}_{y\in F}\varphi_{B,k}(y)=0\}
\end{eqnarray*}
with $B:=Y\setminus \operatorname*{int}(F+D)$.\\
$\varphi_{F+D,-k}$ and $\varphi_{B,k}$ are finite-valued and continuous.\\
$\varphi_{B,k}$ is strictly $(\operatorname*{int}D)$-monotone, whereas $\varphi_{F+D,-k}$ is strictly $(-\operatorname*{int}D)$-monotone.\\
If $F+D$ is convex, then $\varphi_{F+D,-k}$ is convex and $\varphi_{B,k}$ is concave.
\end{proposition}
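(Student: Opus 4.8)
The plan is to realize both scalarizing functionals as instances of the family $\varphi_{a-H,k}$ studied in Section \ref{s-uni}, to read off their sublevel sets, and to match the resulting conditions with the description $\operatorname*{WEff}(F,D)=F\cap\operatorname*{bd}(F+D)$.

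First I would check the hypotheses. Since $\operatorname*{int}D\neq\emptyset$ and $F+D$ is a proper subset of $Y$ we have $F\neq\emptyset$ and $D\neq Y$, hence $0\notin\operatorname*{int}D$ and $k\neq0$. As $D$ is a convex cone, $D+D\subseteq D$, so $D\subseteq0^+D\subseteq0^+(F+D)$ and therefore $k\in\operatorname*{int}D\subseteq\operatorname*{int}0^+(F+D)$. Writing $\varphi_{F+D,-k}$ in the form $\varphi_{a-H,k}$ with $a=0$, $H=-(F+D)$ and direction vector $-k$, this inclusion reads $-k\in\operatorname*{int}0^+H$, so Lemma \ref{prop-finite_ua} shows that $(H2_{F+D,-k})$ holds and that $\varphi_{F+D,-k}$ is finite-valued and continuous. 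Lemma \ref{-phi_Ak}, applied with $F+D$ in the role of $a-H$ (so that $Y\setminus(a-\operatorname*{int}H)=Y\setminus\operatorname*{int}(F+D)=B$ and the reversed direction is $k$), then gives that $(H2_{B,k})$ holds, that $\varphi_{F+D,-k}$ and $\varphi_{B,k}$ are finite-valued exactly on $\operatorname*{bd}(F+D)+\mathbb{R}k$, and that $\varphi_{F+D,-k}=-\varphi_{B,k}$ on this set; since $\varphi_{F+D,-k}$ is finite on all of $Y$, that set is $Y$, so $\varphi_{B,k}$ is finite-valued as well, is continuous by Lemma \ref{prop-funcII}, and $\varphi_{F+D,-k}=-\varphi_{B,k}$ identically.

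Next I would translate the sublevel-set formulas. From Lemma \ref{t251ua}(a) and Lemma \ref{prop-funcII}, rewritten through $a=0$, $H=-(F+D)$ and direction $-k$, one obtains $\{y\mid\varphi_{F+D,-k}(y)\leq0\}=F+D$, $\{y\mid\varphi_{F+D,-k}(y)<0\}=\operatorname*{int}(F+D)$ and $\{y\mid\varphi_{F+D,-k}(y)=0\}=\operatorname*{bd}(F+D)$, and the negated versions hold for $\varphi_{B,k}$. For $y^0\in F$ we have $y^0\in F+D$ because $0\in D$, hence $\varphi_{F+D,-k}(y^0)\leq0\leq\varphi_{B,k}(y^0)$ with equality in either inequality precisely when $y^0\in\operatorname*{bd}(F+D)$. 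Thus the condition $\varphi_{F+D,-k}(y^0)=\operatorname*{max}_{y\in F}\varphi_{F+D,-k}(y)=0$ is equivalent to $\varphi_{F+D,-k}(y^0)=0$ (all other values on $F$ being $\leq0$), i.e. to $y^0\in F\cap\operatorname*{bd}(F+D)$, and by $\varphi_{B,k}=-\varphi_{F+D,-k}$ the condition for $\varphi_{B,k}$ describes the same set. Finally, since $D$ is a convex cone with $D\neq Y$ and $\operatorname*{int}D\neq\emptyset$ one has $0\in\operatorname*{bd}(\operatorname*{int}D\setminus\{0\})$ and $D+\operatorname*{int}D\subseteq\operatorname*{int}D\subseteq D\setminus\{0\}$, so Theorem \ref{weff-in-bd}(b) yields $\operatorname*{WEff}(F,D)=F\cap\operatorname*{bd}(F+D)$, which gives both set equalities.

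For the remaining assertions, from $(H2_{B,k})$, i.e. $(F+D)+\mathbb{R}_>k\subseteq\operatorname*{int}(F+D)$, one gets $\operatorname*{cl}(\operatorname*{int}(F+D))=F+D$ and hence $\operatorname*{int}B=Y\setminus(F+D)$; if $b\in B$ and $d\in\operatorname*{int}D$ then $b-d\notin F+D$ (else $b\in(F+D)+\operatorname*{int}D\subseteq F+\operatorname*{int}D\subseteq\operatorname*{int}(F+D)$, contradicting $b\in B$), so $B-\operatorname*{int}D\subseteq\operatorname*{int}B\subseteq\operatorname*{core}B$, and since $0\notin\operatorname*{int}D$ and $\varphi_{B,k}$ is finite-valued, Lemma \ref{t251ua}(i) makes $\varphi_{B,k}$ strictly $(\operatorname*{int}D)$-monotone; consequently $\varphi_{F+D,-k}=-\varphi_{B,k}$ is strictly $(-\operatorname*{int}D)$-monotone. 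If in addition $F+D$ is convex, then $H=-(F+D)$ is a proper closed convex set and $-k\notin0^+(F+D)$ --- otherwise $k$ and $-k$ would both lie in the convex cone $0^+(F+D)$ with $k$ interior, forcing $0^+(F+D)=Y$ and thus $F+D=Y$ --- so the direction vector $-k$ lies in $0^+H\setminus(-0^+H)$ and Lemma \ref{Fact4}(a) gives that $\varphi_{F+D,-k}$ is convex, whence $\varphi_{B,k}=-\varphi_{F+D,-k}$ is concave. I expect the only real difficulty to be bookkeeping: the identities of Section \ref{s-uni} are stated for sets of the form $a-H$ with direction $k$, while here the directions are $-k$ and $k$ and the sets are $F+D$ and $Y\setminus\operatorname*{int}(F+D)$, so one must keep the signs straight in $\operatorname*{int}H$, $\operatorname*{bd}H$ and $0^+H$ and verify the relevant $(H2)$-type hypotheses.
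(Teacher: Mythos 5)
Your argument is correct, and it uses the same basic toolkit as the paper (the level-set and regularity lemmata for $\varphi_{a-H,k}$, Lemma \ref{-phi_Ak} for the sign relation $\varphi_{B,k}=-\varphi_{F+D,-k}$, and the boundary localization of $\operatorname*{WEff}$), but you organize the main set equality differently. The paper proves the inclusion ``minimizers $\subseteq\operatorname*{WEff}$'' via the strict $(\operatorname*{int}D)$-monotonicity of $\varphi_{B,k}$ together with Proposition \ref{p-mon-wscal}, and the reverse inclusion via $\operatorname*{WEff}(F,D)\subseteq\operatorname*{bd}(F+D)$; you instead identify both scalarization sets directly with $F\cap\operatorname*{bd}(F+D)$ through the sublevel-set formulas $\{\varphi_{F+D,-k}\le 0\}=F+D$, $\{\varphi_{F+D,-k}=0\}=\operatorname*{bd}(F+D)$, and then invoke the full equality $\operatorname*{WEff}(F,D)=F\cap\operatorname*{bd}(F+D)$ from Theorem \ref{weff-in-bd}(b). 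This makes the monotonicity claims genuinely separate assertions rather than ingredients of the set equality, which is a clean gain. Your treatment of the strict monotonicity is in fact more careful than the paper's: you establish $B-\operatorname*{int}D\subseteq\operatorname*{int}B\subseteq\operatorname*{core}B$, which is what Lemma \ref{t251ua}(i) actually requires, whereas the paper only records $B-\operatorname*{int}D\subseteq B$. Two cosmetic points: the inclusion $(F+D)+\mathbb{R}_{>}k\subseteq\operatorname*{int}(F+D)$ that you quote is $(H2_{F+D,-k})$ read in the original variables, not $(H2_{B,k})$; and when citing Theorem \ref{weff-in-bd}(b) you should note explicitly that $D+\operatorname*{int}D$ is open and contained in $D$, hence in $\operatorname*{int}D\subseteq D\setminus\{0\}$ --- which you essentially do.
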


\begin{proof}
\begin{itemize}
\item[]
\item[(a)] $(H2_{F+D,-k})$ holds, thus $\varphi_{F+D,-k}$ is continuous on its domain.
Since $k\in\operatorname*{core}D$ and $D\subseteq 0^+(F+D)$, we get $k\in\operatorname*{core}0^+(F+D)$. 
Hence, $\varphi_{F+D,-k}$ is finite-valued by Lemma \ref{t251ua}.
Because of Lemma \ref{-phi_Ak}, $\varphi_{B,k}=-\varphi_{F+D,-k}$ is also finite-valued and continuous.
If $F+D$ is convex, then $\varphi_{F+D,-k}$ is convex by Lemma \ref{Fact4}, thus $\varphi_{B,k}$ is concave.
\item[(b)] Suppose $B-\operatorname*{int}D\subseteq B$ does not hold.
$\Rightarrow \exists y\in Y, d\in\operatorname*{int}D : y\notin \operatorname*{int}(F+D) \mbox{ and } y-d\in \operatorname*{int}(F+D)$. $\Rightarrow y\in d+\operatorname*{int}(F+D)\subseteq\operatorname*{int}(F+D)$, a contradiction.
Thus, $B-\operatorname*{int}D\subseteq B$, and $\varphi_{B,k}$ is strictly $(\operatorname*{int}D)$-monotone by
Lemma \ref{t251ua}. $\Rightarrow$ $\varphi_{F+D,-k}=-\varphi_{B,k}$ is strictly $(-\operatorname*{int}D)$-monotone.\\
By Proposition \ref{p-mon-wscal}, $\{y^0\in F\mid \varphi_{B,k}(y^0)=\operatorname*{min}_{y\in F}\varphi_{B,k}(y)\}\subseteq
\operatorname*{WEff}(F,D)$.\\
$\varphi_{F+D,-k}(y)\leq 0\mbox{ for all } y\in F+D$.\\
$\forall y\in Y:\;\varphi_{F+D,-k}(y)=0\Leftrightarrow y\in\operatorname*{bd}(F+D)$.\\
$\forall y\in Y:\;\varphi_{B,k}(y)=0\Leftrightarrow y\in\operatorname*{bd}B=\operatorname*{bd}(F+D)$.\\
$\Rightarrow\operatorname*{max}_{y\in F}\varphi_{F+D,-k}(y)=0=\operatorname*{min}_{y\in F}\varphi_{B,k}(y)$ since $\varphi_{F+D,-k}=-\varphi_{B,k}$.\\
Thus, $\{y^0\in F\mid \varphi_{F+D,-k}(y^0)=\operatorname*{max}_{y\in F}\varphi_{F+D,-k}(y)=0\}=\{y^0\in F\mid \varphi_{B,k}(y^0)=\operatorname*{Eff}_{y\in F}\varphi_{B,k}(y)=0\}\subseteq
\operatorname*{WEff}(F,D)$.
\item[(c)] By Theorem \ref{weff-in-bd}, $\operatorname*{WEff}(F,D)\subseteq\operatorname*{bd}(F+D)$. Thus, $\varphi_{F+D,-k}(y)=0\mbox{ for all } y\in\operatorname*{WEff}(F,D)$.
This yields the assertion.
\end{itemize}
\end{proof}
\smallskip

\section{Scalarization of the Efficient and the Weakly Efficient Point Set by Norms}\label{s-scal-norm}

Section \ref{s-scal-uni} delivers by Lemma \ref{p-ordint-varphi} scalarization results for efficient and weakly efficient elements by norms. 

We will use the following condition:

\quad

\begin{tabular}{ll}
(Hn--VOP$_{C,k}$): & $Y$ is a topological vector space, $D$ is a nonempty subset of $Y$,\\
& $C$ is a non-trivial closed convex pointed cone in $Y$ with\\
&  $k\in \operatorname*{core}C$, $a\in Y$ and $F\subset Y$ with $F\subseteq a+C$. 
\end{tabular}

\quad

In this section, $\|\cdot \| _{C,k}$ will denote the norm which is given as the Minkowski functional of the order interval $[-k,k]_C$ (see \cite[Lemma 1.45]{Jah86a}). This norm is just an order unit norm. 

Let us first give some sufficient conditions for efficient and weakly efficient points by minimal solutions of norms. 
Theorem \ref{hab-t421} implies with Lemma \ref{p-ordint-varphi}:

\begin{theorem}\label{n-hab-t421}
Suppose (Hn--VOP$_{C,k}$) and define
$$\Psi := \operatorname*{argmin}_{y\in F} \| y-a\| _{C,k}.$$ 
Then:
\begin{itemize}
\item[(a)] $\operatorname*{Eff}(F,D)\cap \Psi\subseteq \operatorname*{Eff}(\Psi,D)$.
\item[(b)] $C+D\subseteq C\implies  \operatorname*{Eff}(F,D)\cap \Psi= \operatorname*{Eff}(\Psi,D)$.
\item[(c)] $C+D\subseteq C$ and $\Psi=\{ y^0\}$ imply $y^0\in\operatorname*{Eff}(F,D)$.
\item[(d)] $C+(D\setminus\{0\})\subseteq \operatorname*{core}C \implies\Psi\subseteq \operatorname*{Eff}(F,D)$.
\item[(e)] $C+\operatorname*{int}D\subseteq C \implies  \Psi\subseteq \operatorname*{WEff}(F,D)$.
\end{itemize}
\end{theorem}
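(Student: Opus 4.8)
The plan is to reduce Theorem~\ref{n-hab-t421} directly to Theorem~\ref{hab-t421} by identifying the norm-minimization problem with the minimization of a functional with uniform sublevel sets. Concretely, under (Hn--VOP$_{C,k}$) we have that $C$ is a non-trivial closed convex pointed cone, $k\in\operatorname*{core}C$, and $a\in Y$, so the hypotheses of Lemma~\ref{p-ordint-varphi} are met with $H:=C$. Hence
\[
\|y-a\|_{C,k}=\varphi_{a-C,k}(y)\quad\text{for all }y\in a+C.
\]
Since the standing assumption includes $F\subseteq a+C$, this identity holds on all of $F$, and therefore
\[
\Psi=\operatorname*{argmin}_{y\in F}\|y-a\|_{C,k}=\operatorname*{argmin}_{y\in F}\varphi_{a-C,k}(y).
\]
So $\Psi$ is exactly the set called $\Psi$ in Theorem~\ref{hab-t421} when one takes $H=C$ there.

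Next I would check that the hypotheses of Theorem~\ref{hab-t421} are available with $H=C$. We need (H1--VOP$_{C,k}$): $Y$ a topological vector space, $F,D\subset Y$, and $C$ a closed proper subset of $Y$ with $k\in 0^+C\setminus\{0\}$. All of this is immediate from (Hn--VOP$_{C,k}$): $C$ is non-trivial, hence proper and closed; $C$ is a convex cone, so $0^+C=C$, and $k\in\operatorname*{core}C\subseteq C=0^+C$; moreover $k\neq 0$ since $k\in\operatorname*{core}C$ and $C$ is pointed (so $0\notin\operatorname*{core}C$, as $\operatorname*{core}C$ would otherwise force $C=Y$, contradicting non-triviality). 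Thus $(H1\text{--VOP}_{C,k})$ holds, and $\varphi_{a-C,k}$ is finite-valued because $k\in\operatorname*{core}C=\operatorname*{core}0^+C$.

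Then the five assertions follow term by term. Parts (a), (b), (c), (e) are precisely Theorem~\ref{hab-t421}(a), (b), (c), (f) with $H=C$, once $\Psi$ is rewritten via the identity above. Part (d) follows from Theorem~\ref{hab-t421}(e) with $H=C$: the hypothesis $C+(D\setminus\{0\})\subseteq\operatorname*{core}C$ is exactly $H+(D\setminus\{0\})\subseteq\operatorname*{core}H$, yielding $\Psi\subseteq\operatorname*{Eff}(F,D)$. (Alternatively one could invoke Theorem~\ref{hab-t421}(d), but the strict-convexity route is unnecessary here since (e) already gives the conclusion.)

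The only genuine point requiring care — the "main obstacle," though it is mild — is the domain issue: Lemma~\ref{p-ordint-varphi} only asserts the equality $\|y-a\|_{C,k}=\varphi_{a-C,k}(y)$ on $a+C$, whereas Theorem~\ref{hab-t421} works with $\varphi_{a-C,k}$ on all of $F\cap\operatorname*{dom}\varphi_{a-C,k}$. Here the inclusion $F\subseteq a+C$ saves us: on $F$ the norm and the functional agree, $F\subseteq a+C\subseteq\operatorname*{dom}\varphi_{a-C,k}$ (indeed $a+C\subseteq a-C+\mathbb{R}k$ is not needed — one simply notes $a+C=a-(-C)$ and $k\in\operatorname*{core}0^+C$ makes $\varphi_{a-C,k}$ finite-valued on $Y$ by Lemma~\ref{t251ua}(c)), so the minimization over $F$ of the two functions is literally the same problem and $\Psi$ is unambiguous. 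With that observation in place, each item is a direct citation of the corresponding item of Theorem~\ref{hab-t421}.
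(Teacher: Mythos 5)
Your proposal is correct and follows exactly the route the paper intends: the paper derives Theorem \ref{n-hab-t421} from Theorem \ref{hab-t421} via Lemma \ref{p-ordint-varphi}, and you supply precisely the needed details (identification of $\|\cdot-a\|_{C,k}$ with $\varphi_{a-C,k}$ on $F\subseteq a+C$, verification of (H1--VOP$_{C,k}$) with $H=C$ using $0^+C=C$ and $0\notin\operatorname*{core}C$, and the item-by-item correspondence (a),(b),(c),(e),(f)$\mapsto$(a),(b),(c),(d),(e)). The only cosmetic blemish is the aside invoking pointedness for $k\neq 0$, which is unnecessary since non-triviality of the cone already gives $0\notin\operatorname*{core}C$, as your own parenthetical argument shows.
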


\begin{corollary}\label{n-c-hab-t421}
Suppose (Hn--VOP$_{D,k}$) and define
$$\Psi := \operatorname*{argmin}_{y\in F} \| y-a\| _{D,k}.$$
Then:
\begin{itemize}
\item[(a)] $\Psi\subseteq \operatorname*{WEff}(F,D)$.
\item[(b)] $\operatorname*{Eff}(F,D)\cap \Psi= \operatorname*{Eff}(\Psi,D)$.
\item[(c)] $\Psi=\{ y^0\}$ implies $y^0\in\operatorname*{Eff}(F,D)$.
\end{itemize}
\end{corollary}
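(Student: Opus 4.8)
The plan is to obtain all three assertions by specializing Theorem~\ref{n-hab-t421} to the case $C:=D$. Under (Hn--VOP$_{D,k}$) the set $D$ is itself a non-trivial closed convex pointed cone with $k\in\operatorname*{core}D$ and $F\subseteq a+D$, so (Hn--VOP$_{C,k}$) is satisfied with $C=D$, and the set $\Psi=\operatorname*{argmin}_{y\in F}\| y-a\|_{D,k}$ of the corollary is precisely the set $\Psi$ appearing in Theorem~\ref{n-hab-t421} for this choice of $C$.

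First I would record the two elementary set inclusions that make the hypotheses of Theorem~\ref{n-hab-t421} available here. Since $D$ is a convex cone it is closed under addition, so $D+D\subseteq D$; hence the hypothesis ``$C+D\subseteq C$'' of parts~(b) and~(c) of Theorem~\ref{n-hab-t421} holds with $C=D$. Moreover $\operatorname*{int}D\subseteq D$, so $D+\operatorname*{int}D\subseteq D+D\subseteq D$, which is the hypothesis ``$C+\operatorname*{int}D\subseteq C$'' of part~(e).

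Granting this, the corollary follows item by item: part~(a) is Theorem~\ref{n-hab-t421}(e) with $C=D$, part~(b) is Theorem~\ref{n-hab-t421}(b) with $C=D$, and part~(c) is Theorem~\ref{n-hab-t421}(c) with $C=D$. (If one prefers, (a)--(c) could also be read off from Corollary~\ref{c-hab-t421} together with Lemma~\ref{p-ordint-varphi}, which identifies $\| y-a\|_{D,k}$ with $\varphi_{a-D,k}(y)$ on $F\subseteq a+D$; but Corollary~\ref{c-hab-t421} is stated under $k\in\operatorname*{int}D$, whereas here only $k\in\operatorname*{core}D$ is assumed, so the route through Theorem~\ref{n-hab-t421} is the cleaner one.)

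There is essentially no obstacle in this argument; the one point worth flagging is that Theorem~\ref{n-hab-t421}(d) does \emph{not} apply, because the inclusion $D+(D\setminus\{0\})\subseteq\operatorname*{core}D$ typically fails --- already for $D=\mathbb{R}^2_+$, since $(1,0)^T+(1,0)^T=(2,0)^T\notin\operatorname*{core}\mathbb{R}^2_+$ --- which is exactly why part~(a) can only assert weak efficiency rather than efficiency.
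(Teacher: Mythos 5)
Your proof is correct and matches the paper's (implicit) argument: the corollary is exactly Theorem~\ref{n-hab-t421} specialized to $C:=D$, using $D+D\subseteq D$ and $D+\operatorname*{int}D\subseteq D$ for a convex cone, with parts (e), (b), (c) giving (a), (b), (c) respectively. Your side remarks — that part (d) is unavailable and that the route via Corollary~\ref{c-hab-t421} would need $k\in\operatorname*{int}D$ rather than $k\in\operatorname*{core}D$ — are also accurate.
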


We will now characterize the efficient point set and the weakly efficient point set by minimal solutions of norms. 
We get from Theorem \ref{hab-p432} by Lemma \ref{p-ordint-varphi}:

\begin{theorem}\label{n-hab-p432}
Suppose that $Y$ is a topological vector space and $D\subset Y$ a non-trivial closed convex pointed cone with $\operatorname*{int}D\not=\emptyset$.\\
If $F\subseteq a+\operatorname*{int}D$ for some $a\in Y$, then
$$ \operatorname*{WEff}(F,D)=\{y^0\in F\mid \exists k\in\operatorname*{int}D:\; \| y^0-a\| _{D,k}=\operatorname*{min}_{y\in F}\| y-a\| _{D,k}\} \quad\mbox{ and }$$
$$ \operatorname*{Eff}(F,D)=\{y^0\in F\mid \exists k\in\operatorname*{int}D\;\forall y\in F\setminus\{y^0\}:\; \| y^0-a\| _{D,k}<\| y-a\| _{D,k}\}.$$
For $y^0\in \operatorname*{WEff}(F,D)$, one can choose $k=y^0-a$, which results in 
$\| y^0-a\| _{D,k}=1$.
\end{theorem}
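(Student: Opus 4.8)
The plan is to obtain this result as a direct corollary of Theorem \ref{hab-p432}, simply rewriting the scalarizing functional $\varphi_{a-D,k}$ as the norm $\|\cdot-a\|_{D,k}$ by means of Lemma \ref{p-ordint-varphi}.

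First I would verify the hypotheses of Lemma \ref{p-ordint-varphi} with $H=D$: here $Y$ is a topological vector space, $D$ is a non-trivial closed convex pointed cone, and any $k\in\operatorname*{int}D$ lies in $\operatorname*{core}D$ because $\operatorname*{int}A\subseteq\operatorname*{core}A$ for every subset $A$ of a topological vector space. Thus Lemma \ref{p-ordint-varphi} gives $\|y-a\|_{D,k}=\varphi_{a-D,k}(y)$ for all $y\in a+D$ and all $k\in\operatorname*{int}D$. Since by hypothesis $F\subseteq a+\operatorname*{int}D\subseteq a+D$, this identity holds for every $y\in F$.

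Next I would transfer the characterizations. Because $\|y-a\|_{D,k}=\varphi_{a-D,k}(y)$ for all $y\in F$, for each fixed $k\in\operatorname*{int}D$ and each $y^0\in F$ the condition $\|y^0-a\|_{D,k}=\min_{y\in F}\|y-a\|_{D,k}$ is equivalent to $\varphi_{a-D,k}(y^0)=\min_{y\in F}\varphi_{a-D,k}(y)$, and the condition $\|y^0-a\|_{D,k}<\|y-a\|_{D,k}$ for all $y\in F\setminus\{y^0\}$ is equivalent to its $\varphi$-analogue. Applying Theorem \ref{hab-p432} in the case $F\subseteq a+\operatorname*{int}D$ then yields both displayed equalities for $\operatorname*{WEff}(F,D)$ and $\operatorname*{Eff}(F,D)$. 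For the normalization, if $y^0\in\operatorname*{WEff}(F,D)$ then $y^0\in F\subseteq a+\operatorname*{int}D$, so $k:=y^0-a\in\operatorname*{int}D$ is an admissible vector; Theorem \ref{hab-p432} asserts that for this choice $\varphi_{a-D,k}(y^0)=1$, and the identity from Lemma \ref{p-ordint-varphi} (applicable since $y^0\in a+D$) converts this into $\|y^0-a\|_{D,k}=1$.

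I do not anticipate a real obstacle: the whole proof is a translation of Theorem \ref{hab-p432} through the norm--functional identity of Lemma \ref{p-ordint-varphi}. The two small points that warrant attention are that the pointedness of $D$ (assumed here, but not in Theorem \ref{hab-p432}) is exactly what makes the Minkowski functional of $[-k,k]_D$ a genuine norm so that Lemma \ref{p-ordint-varphi} applies, and that the identity $\|y-a\|_{D,k}=\varphi_{a-D,k}(y)$ must hold on all of $F$ --- which is guaranteed precisely by $F\subseteq a+\operatorname*{int}D$, not merely at the candidate minimizer.
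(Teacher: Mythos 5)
Your proposal is correct and follows exactly the paper's route: the paper derives Theorem \ref{n-hab-p432} from Theorem \ref{hab-p432} (case $F\subseteq a+\operatorname*{int}D$) via the identity $\|y-a\|_{D,k}=\varphi_{a-D,k}(y)$ of Lemma \ref{p-ordint-varphi}, which is precisely your argument, including the observations that $\operatorname*{int}D\subseteq\operatorname*{core}D$, that pointedness makes the order unit norm available, and that the identity holds on all of $F\subseteq a+D$.
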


\smallskip

\def\cfac#1{\ifmmode\setbox7\hbox{$\accent"5E#1$}\else
  \setbox7\hbox{\accent"5E#1}\penalty 10000\relax\fi\raise 1\ht7
  \hbox{\lower1.15ex\hbox to 1\wd7{\hss\accent"13\hss}}\penalty 10000
  \hskip-1\wd7\penalty 10000\box7}
  \def\cfac#1{\ifmmode\setbox7\hbox{$\accent"5E#1$}\else
  \setbox7\hbox{\accent"5E#1}\penalty 10000\relax\fi\raise 1\ht7
  \hbox{\lower1.15ex\hbox to 1\wd7{\hss\accent"13\hss}}\penalty 10000
  \hskip-1\wd7\penalty 10000\box7}
  \def\cfac#1{\ifmmode\setbox7\hbox{$\accent"5E#1$}\else
  \setbox7\hbox{\accent"5E#1}\penalty 10000\relax\fi\raise 1\ht7
  \hbox{\lower1.15ex\hbox to 1\wd7{\hss\accent"13\hss}}\penalty 10000
  \hskip-1\wd7\penalty 10000\box7}
  \def\cfac#1{\ifmmode\setbox7\hbox{$\accent"5E#1$}\else
  \setbox7\hbox{\accent"5E#1}\penalty 10000\relax\fi\raise 1\ht7
  \hbox{\lower1.15ex\hbox to 1\wd7{\hss\accent"13\hss}}\penalty 10000
  \hskip-1\wd7\penalty 10000\box7}
  \def\cfac#1{\ifmmode\setbox7\hbox{$\accent"5E#1$}\else
  \setbox7\hbox{\accent"5E#1}\penalty 10000\relax\fi\raise 1\ht7
  \hbox{\lower1.15ex\hbox to 1\wd7{\hss\accent"13\hss}}\penalty 10000
  \hskip-1\wd7\penalty 10000\box7}
  \def\cfac#1{\ifmmode\setbox7\hbox{$\accent"5E#1$}\else
  \setbox7\hbox{\accent"5E#1}\penalty 10000\relax\fi\raise 1\ht7
  \hbox{\lower1.15ex\hbox to 1\wd7{\hss\accent"13\hss}}\penalty 10000
  \hskip-1\wd7\penalty 10000\box7}
  \def\cfac#1{\ifmmode\setbox7\hbox{$\accent"5E#1$}\else
  \setbox7\hbox{\accent"5E#1}\penalty 10000\relax\fi\raise 1\ht7
  \hbox{\lower1.15ex\hbox to 1\wd7{\hss\accent"13\hss}}\penalty 10000
  \hskip-1\wd7\penalty 10000\box7}
  \def\cfac#1{\ifmmode\setbox7\hbox{$\accent"5E#1$}\else
  \setbox7\hbox{\accent"5E#1}\penalty 10000\relax\fi\raise 1\ht7
  \hbox{\lower1.15ex\hbox to 1\wd7{\hss\accent"13\hss}}\penalty 10000
  \hskip-1\wd7\penalty 10000\box7}
  \def\cfac#1{\ifmmode\setbox7\hbox{$\accent"5E#1$}\else
  \setbox7\hbox{\accent"5E#1}\penalty 10000\relax\fi\raise 1\ht7
  \hbox{\lower1.15ex\hbox to 1\wd7{\hss\accent"13\hss}}\penalty 10000
  \hskip-1\wd7\penalty 10000\box7} \def\Dbar{\leavevmode\lower.6ex\hbox to
  0pt{\hskip-.23ex \accent"16\hss}D}
  \def\cfac#1{\ifmmode\setbox7\hbox{$\accent"5E#1$}\else
  \setbox7\hbox{\accent"5E#1}\penalty 10000\relax\fi\raise 1\ht7
  \hbox{\lower1.15ex\hbox to 1\wd7{\hss\accent"13\hss}}\penalty 10000
  \hskip-1\wd7\penalty 10000\box7} \def\cprime{$'$}
  \def\Dbar{\leavevmode\lower.6ex\hbox to 0pt{\hskip-.23ex \accent"16\hss}D}
  \def\cfac#1{\ifmmode\setbox7\hbox{$\accent"5E#1$}\else
  \setbox7\hbox{\accent"5E#1}\penalty 10000\relax\fi\raise 1\ht7
  \hbox{\lower1.15ex\hbox to 1\wd7{\hss\accent"13\hss}}\penalty 10000
  \hskip-1\wd7\penalty 10000\box7} \def\cprime{$'$}
  \def\Dbar{\leavevmode\lower.6ex\hbox to 0pt{\hskip-.23ex \accent"16\hss}D}
  \def\cfac#1{\ifmmode\setbox7\hbox{$\accent"5E#1$}\else
  \setbox7\hbox{\accent"5E#1}\penalty 10000\relax\fi\raise 1\ht7
  \hbox{\lower1.15ex\hbox to 1\wd7{\hss\accent"13\hss}}\penalty 10000
  \hskip-1\wd7\penalty 10000\box7}
  \def\udot#1{\ifmmode\oalign{$#1$\crcr\hidewidth.\hidewidth
  }\else\oalign{#1\crcr\hidewidth.\hidewidth}\fi}
  \def\cfac#1{\ifmmode\setbox7\hbox{$\accent"5E#1$}\else
  \setbox7\hbox{\accent"5E#1}\penalty 10000\relax\fi\raise 1\ht7
  \hbox{\lower1.15ex\hbox to 1\wd7{\hss\accent"13\hss}}\penalty 10000
  \hskip-1\wd7\penalty 10000\box7} \def\Dbar{\leavevmode\lower.6ex\hbox to
  0pt{\hskip-.23ex \accent"16\hss}D}
  \def\cfac#1{\ifmmode\setbox7\hbox{$\accent"5E#1$}\else
  \setbox7\hbox{\accent"5E#1}\penalty 10000\relax\fi\raise 1\ht7
  \hbox{\lower1.15ex\hbox to 1\wd7{\hss\accent"13\hss}}\penalty 10000
  \hskip-1\wd7\penalty 10000\box7} \def\Dbar{\leavevmode\lower.6ex\hbox to
  0pt{\hskip-.23ex \accent"16\hss}D}
  \def\cfac#1{\ifmmode\setbox7\hbox{$\accent"5E#1$}\else
  \setbox7\hbox{\accent"5E#1}\penalty 10000\relax\fi\raise 1\ht7
  \hbox{\lower1.15ex\hbox to 1\wd7{\hss\accent"13\hss}}\penalty 10000
  \hskip-1\wd7\penalty 10000\box7} \def\Dbar{\leavevmode\lower.6ex\hbox to
  0pt{\hskip-.23ex \accent"16\hss}D}
  \def\cfac#1{\ifmmode\setbox7\hbox{$\accent"5E#1$}\else
  \setbox7\hbox{\accent"5E#1}\penalty 10000\relax\fi\raise 1\ht7
  \hbox{\lower1.15ex\hbox to 1\wd7{\hss\accent"13\hss}}\penalty 10000
  \hskip-1\wd7\penalty 10000\box7}

\end{document}